\newcommand{\meet}{\land}
\newcommand{\join}{\lor}
\newcommand{\up}{{\sf Up}}
\newcommand{\boxp}{\blacksquare_P}
\newcommand{\boxf}{\blacksquare_F}
\newcommand{\diap}{\blacklozenge_P}
\newcommand{\diaf}{\blacklozenge_F}
\newcommand{\boxq}{\Box_Q}
\newcommand{\boxr}{\Box_R}
\newcommand{\qinv}{{Q  \raisebox{-2pt}{\scalebox{1.2}{$\breve{\hspace{3.5pt}}$}}}}
\newcommand{\rinv}{{R  \raisebox{-2pt}{\scalebox{1.2}{$\breve{\hspace{3.5pt}}$}}}}
\newcommand{\boxqinv}{\Box_{Q  \raisebox{-4.3pt}{\scalebox{1.2}{$\breve{\hspace{3.5pt}}$}}}}
\newcommand{\boxrinv}{\Box_{R  \raisebox{-4.3pt}{\scalebox{1.2}{$\breve{\hspace{3.5pt}}$}}}}
\newcommand{\diaq}{\Diamond_Q}
\newcommand{\diar}{\Diamond_R}
\newcommand{\diaqinv}{\Diamond_{Q  \raisebox{-4.3pt}{\scalebox{1.2}{$\breve{\hspace{3.5pt}}$}}}}
\newcommand{\ts}{{\sf TS4}}
\newcommand{\boxpr}{\Box_P}
\newcommand{\boxfr}{\Box_F}
\newcommand{\diapr}{\Diamond_P}
\newcommand{\boxprdot}{\Box_P'}
\newcommand{\boxfrdot}{\Box_F'}
\newcommand{\diaprdot }{\Diamond_P'}
\newcommand{\diafrdot }{\Diamond_F'}
\newcommand{\mst}{{\sf MS4.t}}
\newcommand{\ms}{{\sf MS4}}
\newcommand{\mipc}{{\sf MIPC}}
\newcommand{\ipc}{{\sf IPC}}
\newcommand{\mha}{{\sf MHA}}
\newcommand{\iqc}{{\sf IQC}}
\newcommand{\qsfour}{{\sf QS4}}
\newcommand{\qsfourt}{{\sf QS4.t}}
\newcommand{\sfour}{{\sf S4}}
\newcommand{\sfive}{{\sf S5}}
\newcommand{\st}{{\sf S4.t}}
\newcommand{\qst}{{\sf QS4.t}}
\newcommand{\qost}{{\sf Q^\circ S4.t}}
\newcommand{\qeinv}{{Q \raisebox{-2pt}{\scalebox{1.2}{$\breve{\hspace{3.5pt}}$}}}_{\hspace{-4pt} E}}
\newtheorem{theorem}{Theorem}[section]
\newtheorem*{theorem*}{Theorem}
\newtheorem{lemma}[theorem]{Lemma}
\newtheorem{proposition}[theorem]{Proposition}
\newtheorem*{proposition*}{Proposition}
\newtheorem{corollary}[theorem]{Corollary}
\newtheorem*{corollary*}{Corollary}
\theoremstyle{definition}
\newtheorem{definition}[theorem]{Definition}
\newtheorem*{definition*}{Definition}
\newtheorem*{example*}{Example}
\newtheorem{remark}[theorem]{Remark}
\newtheorem*{remark*}{Remark}
\author[G. Bezhanishvili]{Guram Bezhanishvili}
\address{Department of Mathematical Sciences\\
New Mexico State University\\
Las Cruces NM 88003\\
USA}
\email{guram@nmsu.edu}
\author[L. Carai]{Luca Carai}
\address{Department of Mathematical Sciences\\
New Mexico State University\\
Las Cruces NM 88003\\
USA}
\email{lcarai@nmsu.edu}
\date{}
\title[Temporal interpretation of monadic intuitionistic quantifiers]{Temporal interpretation of intuitionistic quantifiers: Monadic case}
\subjclass[2010]{03B44, 03B45, 03B55}
\keywords{Intuitionistic logic, modal logic, tense logic, monadic quantifiers, G\"{o}del translation}
\begin{document}

\begin{abstract}
In a recent paper we showed that intuitionistic quantifiers admit the following temporal interpretation: ``always in the future" (for $\forall$) and ``sometime in the past" (for $\exists$). In this paper we study this interpretation for the monadic fragment $\sf MIPC$ of the intuitionistic predicate logic. It is well known that $\sf MIPC$ is translated fully and faithfully into the monadic fragment $\sf MS4$ of the predicate $\sf S4$ (G\"{o}del translation). We introduce a new tense extension of $\sf S4$, denoted by $\sf TS4$, and provide an alternative full and faithful translation of $\sf MIPC$ into $\sf TS4$, which yields the temporal interpretation of monadic intuitionistic quantifiers mentioned above. We compare this new translation with the G\"{o}del translation by showing that both $\sf MS4$ and $\sf TS4$ can be translated fully and faithfully into a tense extension of $\sf MS4$, which we denote by $\sf MS4.t$. This is done by utilizing the algebraic and relational semantics for the new logics introduced. As a byproduct, we prove the finite model property (fmp) for $\sf MS4.t$ and show that the fmp for the other logics involved can be derived as a consequence of the fullness and faithfulness of the translations considered.
\end{abstract}

\maketitle

\section{Introduction}\label{sec:introduction}

It is well known that, unlike classical quantifiers, the interpretation of intuitionistic quantifiers is non-symmetric in that $\forall x A$ is true at a world $w$ iff $A$ is true at every object $a$ in the domain $D_v$ of every world $v$ accessible from $w$, and $\exists x A$ is true at $w$ iff $A$ is true at some object $a$ in the domain $D_w$ of $w$.
This non-symmetry is also evident in the G\"{o}del translation of the intuitionistic predicate logic $\iqc$ into the predicate $\sfour$, denoted $\qsfour$, since $\forall xA$ is translated as $\Box\forall x A^t$ and $\exists x A$
as $\exists x A^t$, where $A^t$ is the translation of $A$.
Because of this, it is common to give a temporal interpretation of the intuitionistic universal quantifier as ``always in the future." In \cite{BC20b} we showed that it is also possible to give a temporal interpretation of the intuitionistic existential quantifier as ``sometime in the past."

In this paper we concentrate on the monadic (one-variable) fragment of
$\iqc$. It is well known that this fragment is axiomatized by Prior's monadic intuitionistic propositional calculus $\mipc$ \cite{Bul66,OS88}. The monadic fragment of
$\qsfour$ was studied by Fischer-Servi \cite{FS77} who showed that the G\"{o}del translation of
$\iqc$
into
$\qsfour$
restricts to the monadic case. We denote this monadic fragment
by $\ms$, introduce a tense counterpart of it, which we denote by $\ts$, modify the G\"{o}del translation, and prove that it embeds $\mipc$ into $\ts$ fully and faithfully. This allows us to give the desired temporal interpretation of intuitionistic monadic quantifiers as ``always in the future" (for $\forall$) and ``sometime in the past" (for $\exists$).

While $\ms$ and $\ts$ are not comparable, we introduce a common extension, which we denote by $\mst$. The system $\mst$ can be thought of as a tense extension of $\ms$. We prove that there exist full and faithful translations of $\mipc$, $\ms$, and $\ts$ into $\mst$, yielding the following diagram, which commutes up to logical equivalence. In the diagram, the G\"{o}del translation is denoted by $(\:)^t$, our new translation by $(\:)^\natural$, and the three translations into $\mst$ by $(\:)^\flat$, $(\:)^\#$ and $(\:)^\dagger$, respectively.
\[
\begin{tikzcd}
& \ms \arrow[rd, "( \; )^\#"] & \\
\mipc \arrow[ru, " ( \; )^t"] \arrow[rd, "(\; )^\natural"']  \arrow[rr,  " ( \; )^\flat"]& & \mst \\
& \ts \arrow[ru, "(\; )^\dagger"'] &
\end{tikzcd}
\]

We prove these results by utilizing the algebraic and relational semantics, and by showing that each of these systems is canonical.
In addition, we prove that $\mst$ has the fmp. It is then an easy consequence of the fullness and faithfulness of the translations considered that the other systems also have the fmp. That $\mipc$ has the fmp was first proved by Bull \cite{Bul65}, and an error in the proof was corrected independently by Fischer-Servi \cite{FS78a} and Ono \cite{Ono77}. To the best of our knowledge, the proof of the fmp for $\ts$ (and possibly also for $\ms$) is new. We conclude the paper by comparing the above translations with the translation of
$\iqc$ into a version of predicate $\st$ studied in \cite{BC20b}.

\section{Translation of $\mipc$ into $\ms$}\label{sec:godel translation}

In this preliminary section we briefly recall the syntax and semantics of $\mipc$ and $\ms$, and give an alternate proof that the
G\"{o}del translation of $\mipc$ into $\ms$ is full and faithful.

\subsection{$\mipc$}

We start by recalling the definition of Prior's monadic intuitionistic propositional calculus $\mipc$. Let $\mathcal L$ be a propositional language
and let $\mathcal L_{\forall\exists}$ be an extension of $\mathcal L$ with two modalities $\forall$ and $\exists$.

\begin{definition}
The \textit{monadic intuitionistic propositional calculus} $\mipc$ is the intuitionistic modal logic in the propositional modal language
$\mathcal L_{\forall\exists}$ containing
\begin{enumerate}
\item all theorems of the intuitionistic propositional calculus $\ipc$;
\item the $\sfour$-axioms for $\forall$:
\begin{enumerate}
\item $\forall(p\land q)\leftrightarrow(\forall p\land\forall q)$,
\item $\forall p \rightarrow p$,
\item $\forall p \rightarrow \forall \forall p$;
\end{enumerate}
\item the $\sfive$-axioms for $\exists$:
\begin{enumerate}
\item $\exists(p\vee q)\leftrightarrow(\exists p\vee\exists q)$,
\item $p \rightarrow \exists p$,
\item $\exists \exists p \rightarrow \exists p$,
\item $(\exists p \land \exists q) \rightarrow \exists (\exists p \land q)$;
\end{enumerate}
\item the axioms connecting $\forall$ and $\exists$:
\begin{enumerate}
\item $\exists\forall p\leftrightarrow\forall p$,
\item $\exists p \leftrightarrow \forall\exists p$;
\end{enumerate}
\end{enumerate}
and closed under the rules of modus ponens, substitution, and necessitation $(\varphi / \forall \varphi )$.
\end{definition}

\begin{remark}
There are a number of axioms that are equivalent to the axiom (3d) (see, e.g., \cite[Lem.~2(d)]{Bez98}).
\end{remark}

The algebraic semantics for $\mipc$ is given by monadic Heyting algebras. These algebras were first introduced by Monteiro and Varsavsky \cite{MV57} as a generalization of monadic (boolean) algebras of Halmos \cite{Hal55}. For a detailed study of monadic Heyting algebras we refer to \cite{Bez98,Bez99,Bez00}.

\begin{definition}\label{def:interior and closure}
Let $H$ be a Heyting algebra.
\begin{enumerate}
\item A unary function ${\sf i}:H\to H$ is an \emph{interior operator} on $H$ if
\begin{enumerate}
\item ${\sf i}(a\wedge b) = {\sf i}a \wedge {\sf i}b$,
\item ${\sf i}1 =1$,
\item ${\sf i}a \leq a$,
\item ${\sf i}a \leq {\sf ii}a$.
\end{enumerate}
\item A unary function ${\sf c}:H\to H$ is a \emph{closure operator} on $H$ if
\begin{enumerate}
\item ${\sf c}(a\vee b) = {\sf c}a \vee {\sf c}b$,
\item ${\sf c}0 =0$,
\item $a \leq {\sf c}a$,
\item ${\sf cc}a \leq {\sf c}a$.
\end{enumerate}
\end{enumerate}
\end{definition}

\begin{definition} \label{def:mha}
A \textit{monadic Heyting algebra} is a triple $\mathfrak A=(H,\forall,\exists)$ where $H$ is a Heyting algebra, $\forall$ is an interior operator
on $H$, and $\exists$ is a closure operator on $H$ satisfying:
\begin{enumerate}
\item $\exists (\exists a \meet b)= \exists a \meet \exists b$,
\item $\forall \exists a = \exists a$,
\item $\exists \forall a = \forall a$.
\end{enumerate}
Let $\mha$ be the class of all monadic Heyting algebras.
\end{definition}

\begin{remark} \label{rem:adjoints}
Let $(H,\forall,\exists)$ be a monadic Heyting algebra.
\begin{enumerate}
\item There are a number of equivalent conditions to Definition~\ref{def:mha}(1) (see, e.g., \cite[Lem.~2(d)]{Bez98}). These together with
the conditions connecting $\forall$ and $\exists$ yield that the fixpoints of $\forall$ form a subalgebra $H_0$ of $H$ which coincides
with the subalgebra of the fixpoints of $\exists$. Moreover, $\forall$ and $\exists$ are the right and left adjoints of the embedding
$H_0\to H$, and up to isomorphism each monadic Heyting algebra arises this way (see, e.g., \cite[Sec.~3]{Bez98}).
\item The non-symmetry of $\forall$ and $\exists$ is manifested by the fact that the $\forall$-analogue
$\forall (\forall a \join b)= \forall a \join \forall b$ of Definition~\ref{def:mha}(1) does not hold in general.
\end{enumerate}
\end{remark}

The standard Lindenbaum-Tarski construction (see, e.g., \cite{RS63}) yields that monadic Heyting algebras provide a sound and complete algebraic
semantics for $\mipc$.

We next turn to the relational semantics for $\mipc$. There are several such (see, e.g., \cite{Bez99}), but we concentrate on the one introduced by Ono \cite{Ono77}.

\begin{definition}\label{def:ono}
An \textit{$\mipc$-frame} is a triple $\mathfrak F=(X,R,Q)$ where $X$ is a set, $R$ is a partial order, $Q$ is a quasi-order
(reflexive and transitive), and the following two conditions are satisfied:
\begin{enumerate}
\item[(O1)] $R \subseteq Q$,
\item[(O2)] $x Q y \Rightarrow (\exists z)(x R z \; \& \; z E_Q y)$.
\end{enumerate}
Here $E_Q$ is the equivalence relation defined by $x E_Q y$ iff $x Q y$ and $y Q x$.
\end{definition}

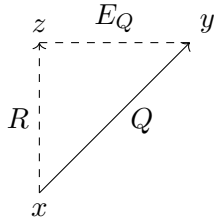
\begin{figure}[!ht]
\begin{center}
\begin{tikzpicture}
\node [below] at (0,0) {$x$};
\node [above] at (0,2) {$z$};
\node [above right] at (2,2) {$y$};
\draw [dashed, ->] (0,0) -- (0,2);
\draw [->] (0,0) -- (2,2);
\draw [dashed] (0,2) -- (2,2);
\node [above] at (1,2) {$E_Q$};
\node [left] at (0,1) {$R$};
\node [left] at (1.65,0.96) {$Q$};
\end{tikzpicture}
\end{center}
\caption{Condition (O2).}
\end{figure}

Let $\mathfrak F=(X,R,Q)$ be an $\mipc$-frame. As usual, for $x\in X$, we write
\[
R[x]=\{y\in X\mid xRy\} \mbox{ and } R^{-1}[x]=\{y\in X\mid yRx\},
\]
and for $U\subseteq X$, we write
\[
R[U]=\bigcup\{R[u]\mid u\in U\} \mbox{ and } R^{-1}[U]=\bigcup\{R^{-1}[u]\mid u\in U\}.
\]
We use the same notation for $Q$ and $E_Q$. Since $E_Q$ is an equivalence relation, we have that $E_Q[x]=(E_Q)^{-1}[x]$ and $E_Q[U]=(E_Q)^{-1}[U]$.

We call a subset $U$ of $X$ an \emph{$R$-upset} provided $U=R[U]$ ($x\in U$ and $xRy$ imply $y\in U$). Let $\up(X)$ be the set of all $R$-upsets
of $\mathfrak F$. It is well known that $\up(X)$ is a Heyting algebra, where the lattice operations are set-theoretic union and intersection, and
$U\to V$ is calculated by
\[
U\to V = \{x\in X \mid R[x]\cap U\subseteq V\} = X \setminus R^{-1}[U \setminus V].
\]
In addition, for $U\in\up(X)$, define
\[
\forall_Q (U) = X \setminus Q^{-1}[X \setminus U] \mbox{ and } \exists_Q (U) = E_Q[U].
\]
Then $\mathfrak F^+=(\up(X),\forall_Q,\exists_Q)$ is a monadic Heyting algebra (see, e.g., \cite[Sec.~6]{Bez99}).

\begin{remark}\label{rem:E_Q and Q on R-upsets}
If $U \in \up(X)$, then Definition~\ref{def:ono}(O2) implies that $E_Q[U]=Q[U]$. That $\exists_Q (U)=Q[U]$ motivates our interpretation of $\exists$ as ``sometime in the past.'' Indeed, taking $Q[U]$ is the standard way to associate an operator on $\wp(X)$ to the tense modality ``sometime in the past'' (see, e.g., \cite[p.~151]{Tho72}). As a consequence of this, $(\mathfrak F^+)_0$ is the set of $Q$-upsets of $\mathfrak{F}$.
\end{remark}

Each monadic Heyting algebra $\mathfrak A=(H,\forall,\exists)$ can be represented as a subalgebra of $\mathfrak F^+$ for some $\mipc$-frame
$\mathfrak F$. For this we recall the definition of the canonical frame of $\mathfrak A$.

\begin{definition}
Let $\mathfrak A=(H,\forall,\exists)$ be a monadic Heyting algebra. The \emph{canonical frame} of $\mathfrak A$ is the frame
$\mathfrak A_+=(X_{\mathfrak A},R_{\mathfrak A},Q_{\mathfrak A})$ where $X_{\mathfrak A}$ is the set of prime filters of $H$,
$R_{\mathfrak A}$ is the inclusion relation, and $x Q_{\mathfrak A} y$ iff $x\cap H_0 \subseteq y$ (equivalently, $x\cap H_0 \subseteq y\cap H_0$).
\end{definition}

By \cite[Sec.~6]{Bez99}, $\mathfrak A_+$ is an $\mipc$-frame.

\begin{definition}
We call an $\mipc$-frame $\mathfrak F$ \emph{canonical} if it is isomorphic to $\mathfrak{A}_+$ for some monadic Heyting algebra $\mathfrak{A}$.
\end{definition}

Define the \emph{Stone map} $\beta : \mathfrak A \to \up(X_{\mathfrak A})$ by
\[
\beta(a)=\{ x \in X_{\mathfrak A} \mid a \in x \}.
\]
By \cite[Sec.~6]{Bez99}, $\beta:\mathfrak A\to(\mathfrak A_+)^+$ is a one-to-one homomorphism of monadic Heyting algebras. Thus, we arrive
at the following representation theorem for monadic Heyting algebras.

\begin{proposition}
Each monadic Heyting algebra $\mathfrak A$ is isomorphic to a subalgebra of $(\mathfrak A_+)^+$.
\end{proposition}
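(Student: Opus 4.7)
The plan is essentially bookkeeping. The preceding paragraph already cites \cite[Sec.~6]{Bez99} for the substantive claim that $\beta : \mathfrak A \to (\mathfrak A_+)^+$ is an injective monadic-Heyting-algebra homomorphism. Once that is granted, the proof is a one-line application of the first isomorphism theorem: the image $\beta(\mathfrak A)$ is a subalgebra of $(\mathfrak A_+)^+$, and the corestriction of $\beta$ to its image is the desired isomorphism $\mathfrak A \to \beta(\mathfrak A)$.

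If one instead wished to verify that cited fact from scratch, I would split the work into three steps. First, each $\beta(a)$ lies in $\up(X_{\mathfrak A})$ because $R_{\mathfrak A}$ is set inclusion and filters are upward closed. Second, $\beta$ preserves the Heyting operations by the standard prime-filter argument, with the implication case requiring the prime filter separation lemma. Third, and this is where the real content sits, one must check that $\beta(\forall a) = \forall_{Q_{\mathfrak A}}(\beta(a))$ and $\beta(\exists a) = \exists_{Q_{\mathfrak A}}(\beta(a))$.

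For the $\forall$-equation, the easy inclusion uses $\forall a \in H_0$ together with $\forall a \le a$: if $\forall a \in x$ and $x Q_{\mathfrak A} y$, then $\forall a \in x \cap H_0 \subseteq y$, hence $a \in y$. For the reverse, if $\forall a \notin x$ one extends $x \cap H_0$ to a prime filter $y$ of $H$ with $a \notin y$, so that $x Q_{\mathfrak A} y$ but $y \notin \beta(a)$; the $\exists$-equation is handled by the symmetric construction producing $y$ with $y \cap H_0 = x \cap H_0$ and $a \in y$. I expect these two prime-filter extension arguments to be the main obstacle, since verifying properness of the filters to be extended requires transferring information between $H_0$ and $H$ through the adjunction recorded in Remark~\ref{rem:adjoints}(1). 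Injectivity of $\beta$ is then immediate from the prime filter theorem applied to any pair $a \ne b$. Since all of this is already assembled in the cited reference, however, in the paper itself the proposition is genuinely a one-line corollary, and that is how I would write it.
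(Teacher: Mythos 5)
Your proposal is correct and matches the paper exactly: the paper offers no separate argument, treating the proposition as an immediate consequence of the cited fact from \cite[Sec.~6]{Bez99} that $\beta$ is a one-to-one homomorphism of monadic Heyting algebras. Your optional sketch of how that cited fact would be verified (in particular the two prime-filter extension arguments for the $\forall$- and $\exists$-clauses) is also sound, but it is not needed for, and does not change, the one-line proof.
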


\begin{remark}
\begin{enumerate}
\item[]
\item The image of $\mathfrak A$ inside $(\mathfrak A_+)^+$ can be recovered by introducing a Priestley topology on $X_\mathfrak{A}$. This leads to the notion of \emph{perfect $\mipc$-frames} and a duality between the category of monadic Heyting algebras and the category of perfect $\mipc$-frames; see~\cite[Thm.~17]{Bez99}.
\item When $\mathfrak{A}$ is finite, its embedding into $(\mathfrak A_+)^+$ is an isomorphism, and hence the categories of finite monadic Heyting algebras and finite $\mipc$-frames are dually equivalent.
\end{enumerate}
\end{remark}

The next corollary is an immediate consequence of the above considerations.

\begin{corollary}\label{prop:canonical}
$\mipc$ is canonical; that is,
\[
\mathfrak A\in\mha \Rightarrow (\mathfrak A_+)^+\in\mha.
\]
\end{corollary}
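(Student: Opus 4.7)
My plan is to observe that this corollary is essentially a two-step consequence of the constructions and results already recalled in this section, so I would not expect to carry out any serious new computation.

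First, I would invoke the fact (cited from \cite[Sec.~6]{Bez99}) that for any monadic Heyting algebra $\mathfrak A=(H,\forall,\exists)$, the triple $\mathfrak A_+=(X_{\mathfrak A}, R_{\mathfrak A}, Q_{\mathfrak A})$ is in fact an $\mipc$-frame, i.e.\ $R_{\mathfrak A}$ is a partial order (it is inclusion on prime filters), $Q_{\mathfrak A}$ is a quasi-order, condition (O1) holds because $x\subseteq y$ implies $x\cap H_0\subseteq y$, and condition (O2) can be verified by the standard prime-filter construction using the adjointness of $\forall$ and $\exists$ with respect to the inclusion $H_0\hookrightarrow H$ noted in Remark~\ref{rem:adjoints}(1).

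Second, I would apply the construction $\mathfrak F \mapsto \mathfrak F^+$ to $\mathfrak F=\mathfrak A_+$. Since we have already recalled that whenever $\mathfrak F$ is an $\mipc$-frame, the algebra $\mathfrak F^+ = (\up(X), \forall_Q, \exists_Q)$ is a monadic Heyting algebra (again by \cite[Sec.~6]{Bez99}), it follows at once that $(\mathfrak A_+)^+\in\mha$. This is precisely the canonicity statement required.

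So there is no real obstacle: the content of the corollary is entirely front-loaded into the two cited facts that $\mathfrak A_+$ is an $\mipc$-frame and that $\mathfrak F^+$ is a monadic Heyting algebra whenever $\mathfrak F$ is an $\mipc$-frame. The only thing worth pointing out in the proof is that the representation proposition just above gives the embedding $\beta\colon\mathfrak A\hookrightarrow(\mathfrak A_+)^+$, but the corollary itself requires only that the codomain lies in $\mha$, which follows directly from the two-step argument.
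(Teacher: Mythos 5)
Your proposal is correct and matches the paper exactly: the corollary is stated there as an immediate consequence of the two facts you cite, namely that $\mathfrak A_+$ is an $\mipc$-frame and that $\mathfrak F^+$ is a monadic Heyting algebra for every $\mipc$-frame $\mathfrak F$ (both from \cite[Sec.~6]{Bez99}). Your closing remark that the embedding $\beta$ is not actually needed for the canonicity claim is also accurate.
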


A \emph{valuation} on an $\mipc$-frame $\mathfrak F=(X,R,Q)$ is a map $v$ associating an $R$-upset of $X$ to any propositional letter of $\mathcal L_{\forall\exists}$. The connectives $\wedge,\vee,\to,\neg$ are then interpreted as in intuitionistic Kripke frames, and $\forall,\exists$ are interpreted by
\begin{equation*}
\begin{array}{l c l}
x \vDash_v \forall \varphi & \text{ iff } & (\forall y \in X)(x Q y \Rightarrow y \vDash_v \varphi ), \\
x \vDash_v \exists \varphi & \text{ iff } & (\exists y \in X)(x E_Q y \;\& \; y \vDash_v \varphi ).
\end{array}
\end{equation*}

As usual, we say that $\varphi$ is \emph{valid} in $\mathfrak F$, and write $\mathfrak F \vDash \varphi$, if $x \vDash_v \varphi$ for every
valuation $v$ and every $x \in X$.

Soundness of $\mipc$ with respect to this semantics is straightforward to prove. For completeness, it is sufficient to utilize the
algebraic completeness and the representation theorem for monadic Heyting algebras. As a result, we arrive at the following:

\begin{theorem}\label{thm:relational semantics for mipc}
$\mipc \vdash \varphi$ iff $\mathfrak F \vDash \varphi$ for every $\mipc$-frame $\mathfrak F$.
\end{theorem}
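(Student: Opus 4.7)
The plan is to prove the two directions separately. Soundness reduces to the algebraic soundness of $\mipc$ via the already-recorded fact that $\mathfrak F^+ \in \mha$ for every $\mipc$-frame $\mathfrak F$. Completeness combines the Lindenbaum-Tarski algebraic completeness (mentioned just before the relational semantics is introduced) with the representation Proposition embedding $\mathfrak A$ into $(\mathfrak A_+)^+$ via the Stone map $\beta$. The linchpin is a bridge lemma saying that frame semantics and algebraic semantics on $\mathfrak F^+$ compute the same subsets.

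First I would establish the bridge lemma: for every valuation $v$ on an $\mipc$-frame $\mathfrak F=(X,R,Q)$, the induced algebraic valuation $\tilde v$ on $\mathfrak F^+$ satisfies
\[
\tilde v(\varphi) = \{x \in X \mid x \vDash_v \varphi\}
\]
for every formula $\varphi$ of $\mathcal L_{\forall\exists}$. The argument is by induction on $\varphi$. The propositional cases are the standard Kripke/Heyting correspondence on $\up(X)$. For the modal cases, the frame clause for $\forall$ unwinds to $X \setminus Q^{-1}[X \setminus \tilde v(\varphi)] = \forall_Q(\tilde v(\varphi))$, and the frame clause for $\exists$ unwinds to $E_Q[\tilde v(\varphi)] = \exists_Q(\tilde v(\varphi))$, matching the very definitions of $\forall_Q$ and $\exists_Q$ on $\mathfrak F^+$. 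Consequently $\mathfrak F \vDash \varphi$ iff $\varphi$ is validated by every algebraic valuation on $\mathfrak F^+$. Since $\mathfrak F^+ \in \mha$ and every theorem of $\mipc$ is algebraically valid in each monadic Heyting algebra, soundness follows.

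For completeness, I argue by contrapositive. Suppose $\mipc \nvdash \varphi$. The Lindenbaum-Tarski construction yields $\mathfrak A \in \mha$ and an algebraic valuation $\mu$ on $\mathfrak A$ with $\mu(\varphi)\neq 1$. By the representation Proposition, $\beta : \mathfrak A \to (\mathfrak A_+)^+$ is a one-to-one homomorphism of monadic Heyting algebras, so the composite $\beta \circ \mu$ refutes $\varphi$ in $(\mathfrak A_+)^+$. Reading $\beta \circ \mu$ as a frame valuation on the $\mipc$-frame $\mathfrak A_+$ and applying the bridge lemma, some prime filter $x \in X_{\mathfrak A}$ satisfies $x \nvDash_{\beta\circ\mu} \varphi$, whence $\mathfrak A_+ \nvDash \varphi$, as required.

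The place to be careful is the $\exists$-case of the bridge lemma, where one must match the Kripke clause (existence of some $E_Q$-related $y$ forcing $\varphi$) with $\exists_Q(\tilde v(\varphi)) = E_Q[\tilde v(\varphi)]$. This is immediate from the definitions, and the preceding remark that $E_Q[U]=Q[U]$ for $R$-upsets both legitimizes the reading of $\exists$ as ``sometime in the past'' and shows that the choice of $E_Q$ (rather than $Q$) in the truth clause does no harm once valuations are restricted to $R$-upsets. Everything else is routine once the representation Proposition and the standard algebraic completeness are in hand.
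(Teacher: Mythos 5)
Your proposal is correct and follows exactly the route the paper indicates: soundness via the fact that $\mathfrak F^+$ is a monadic Heyting algebra together with the (routine) correspondence between frame and algebraic valuations, and completeness via the Lindenbaum--Tarski construction combined with the representation of $\mathfrak A$ as a subalgebra of $(\mathfrak A_+)^+$. The paper merely compresses this into one sentence, so your write-up is the intended argument with the standard details filled in.
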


We conclude this section by recalling that $\mipc$ has the fmp. This was first established by Bull \cite{Bul66} using algebraic semantics.
His proof contained a gap, which was corrected independently by Fischer-Servi \cite{FS78a} and Ono \cite{Ono77}. A semantic proof is given
in \cite{GKWZ03}, which is based on the technique developed by Grefe \cite{Gre98}. We will give yet another proof of this result in
Section~\ref{sec:FMP}.

\subsection{MS4}

Let $\mathcal L_{\Box\forall}$ be a propositional bimodal language with two modal operators $\Box$ and $\forall$.

\begin{definition}
The \emph{monadic $\sf S4$}, denoted $\ms$, is the smallest classical bimodal logic containing the $\sf S4$-axioms for $\Box$, the $\sf S5$-axioms
for $\forall$, the left commutativity axiom
\[
\Box \forall p \to \forall \Box p,
\]
and closed under modus ponens, substitution, $\Box$-necessitation, and $\forall$-necessi\-tation.
\end{definition}

As usual, $\Diamond$ is an abbreviation for $\neg\Box\neg$ and $\exists$ is an abbreviation for $\neg\forall\neg$.

\begin{remark}
Recalling the definition of fusion of two logics (see \cite{GKWZ03}), $\ms$ is obtained from the fusion ${\sf S4}\otimes{\sf S5}$
by adding the left commutativity axiom $\Box \forall p \rightarrow \forall \Box p$ which is the monadic version of the converse Barcan formula. The monadic version of the Barcan formula is the right commutativity axiom $\forall \Box p \rightarrow \Box \forall p$. Adding it to $\ms$ yields the product logic $\sf S4 \times \sf S5$; see~\cite[Ch.~5]{GKWZ03} for details.
\end{remark}

The algebraic semantics for $\ms$ is given by monadic $\sfour$-algebras. To define these algebras, we first recall the definition of $\sfour$-algebras and $\sfive$-algebras.

\begin{definition}
\begin{enumerate}
\item[]
\item An \emph{$\sfour$-algebra}, or an \emph{interior algebra}, is a pair $\mathfrak{B}=(B, \Box)$ where $B$ is a boolean algebra and $\Box$ is an interior operator on $B$ (see Definition~\ref{def:interior and closure}(1)).
\item An \emph{$\sfive$-algebra}, or a \emph{monadic algebra}, is an $\sfour$-algebra $\mathfrak{B}=(B, \forall)$ that in addition satisfies $a \leq \forall \exists a$ for all $a \in B$.
\end{enumerate}
\end{definition}

We are ready to define monadic $\sfour$-algebras.

\begin{definition}\label{def:ms-alg}
A \textit{monadic $\sfour$-algebra}, or an \emph{$\ms$-algebra} for short, is a tuple $\mathfrak B=(B, \Box, \forall)$ where
\begin{enumerate}
\item $(B, \Box)$ is an $\sfour$-algebra,
\item $(B, \forall)$ is an $\sfive$-algebra,
\item $\Box \forall a \leq \forall \Box a$ for each $a\in B$.
\end{enumerate}
\end{definition}

\begin{lemma}\label{lem:equivalent axioms ms-alg}
The axiom $\Box \forall a \leq \forall \Box a$ in Definition~\ref{def:ms-alg} can be replaced by any of the following:
\begin{enumerate}
\item $\Box \forall \Box a = \Box \forall a$.
\item $\forall \Box \forall a= \Box \forall a$.
\item $\exists \Box \exists a= \Box \exists a$.
\item $\Box \exists \Box a = \exists \Box a$.
\item $\exists \Box a \le \Box \exists a$.
\end{enumerate}
\end{lemma}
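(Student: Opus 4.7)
Denote the axiom $\Box \forall a \le \forall \Box a$ by $(\star)$. The plan is to show that each of (1)--(5) is equivalent to $(\star)$, working entirely from the $\sfour$-axioms for $\Box$ and $\sfive$-axioms for $\forall$. The structural facts I will invoke repeatedly are: $\Box$ and $\forall$ are deflationary and idempotent; both $\forall a$ and $\exists a$ are $\forall$-fixpoints, as the $\sfive$-identities $\forall\forall a=\forall a$ and $\forall\exists a=\exists a$ record; and all four of $\Box,\forall,\Diamond,\exists$ are monotone. Conceptually, each of (1)--(4) asserts that a particular composite term is either a $\Box$-open element or a $\forall$-fixpoint, while (5) sits opposite $(\star)$ as a ``twist'' exchanging the roles of $\Box$ and $\forall$.

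Each of the easier equivalences is a one-substitution-plus-deflation argument. For $(\star) \Leftrightarrow (1)$: applying $\Box$ to $(\star)$ and using $\Box\Box=\Box$ yields $\Box\forall a\le\Box\forall\Box a$, while the reverse inequality $\Box\forall\Box a\le\Box\forall a$ comes from $\Box a\le a$ and monotonicity of $\forall$ and $\Box$; conversely, (1) plus deflation of $\Box$ gives $\Box\forall a=\Box\forall\Box a\le\forall\Box a$. For $(\star) \Leftrightarrow (2)$: substituting $\forall a$ for $a$ in $(\star)$ gives $\Box\forall a=\Box\forall\forall a\le\forall\Box\forall a$, and deflation of $\forall$ furnishes the other inequality; conversely, (2) combined with $\forall a\le a$ gives $\Box\forall a=\forall\Box\forall a\le\forall\Box a$.

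The less routine equivalence is $(\star)\Leftrightarrow(5)$, from which (3) and (4) will then follow easily. For $(\star)\Rightarrow(5)$, apply $(\star)$ to $\exists a$: since $\forall\exists a=\exists a$, this yields $\Box\exists a\le\forall\Box\exists a$, so $\Box\exists a$ is a $\forall$-fixpoint and in particular $\exists\Box\exists a=\Box\exists a$. Next, from $\Box a\le\exists a$ applying $\Box$ gives $\Box a\le\Box\exists a$, and hence $\exists\Box a\le\exists\Box\exists a=\Box\exists a$, which is (5). Conversely, $(5)\Rightarrow(\star)$ is obtained by substituting $\forall a$ for $a$ in (5): $\exists\Box\forall a\le\Box\exists\forall a=\Box\forall a$, forcing $\Box\forall a$ to be an $\exists$-fixpoint and therefore a $\forall$-fixpoint, so $\Box\forall a=\forall\Box\forall a\le\forall\Box a$. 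The identity (3) is precisely the $\forall$-fixpoint property of $\Box\exists a$ that appeared in the proof of $(\star)\Rightarrow(5)$; the converse (3)$\Rightarrow(\star)$ uses the substitution $a\mapsto\forall a$ and the identity $\exists\forall a=\forall a$ to conclude $\exists\Box\forall a=\Box\forall a$, then proceeds as in the proof of $(5)\Rightarrow(\star)$. Finally, (4) follows from (5) applied to $\Box a$ together with deflation of $\Box$, and conversely from (4) one recovers (5) via $\exists\Box a=\Box\exists\Box a\le\Box\exists a$, using $\exists\Box a\le\exists a$ and monotonicity of $\Box$.

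The main obstacle is recognizing the right substitutions---namely, feeding $\forall a$, $\exists a$, or $\Box a$ into $(\star)$ and (5) to activate the $\sfive$-fixpoint identities $\forall\forall a=\forall a$ and $\forall\exists a=\exists a$. Once these substitutions are in hand, all five equivalences collapse to one-line deflation calculations, and the boolean dualities between $\forall/\exists$ and $\Box/\Diamond$ need not be invoked explicitly.
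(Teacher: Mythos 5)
Your proof is correct and uses essentially the same machinery as the paper's: substitution of $\forall a$ and $\exists a$ into the inequalities combined with the $\sfive$-identities $\forall\exists a=\exists a$ and $\exists\forall a=\forall a$, plus deflation and monotonicity. The only organizational difference is that you bridge the $\forall$-cluster $\{(\star),(1),(2)\}$ and the $\exists$-cluster $\{(3),(4),(5)\}$ via the direct equivalence $(\star)\Leftrightarrow(5)$, whereas the paper bridges via $(2)\Leftrightarrow(3)$ and leaves the within-cluster equivalences (which you spell out in full) to the reader.
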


\begin{proof}
Showing that (1) and (2) are equivalent to $\Box \forall a \leq \forall \Box a$ is straightforward. That (3) and (4) are equivalent to (5) can be proved similarly (see~\cite{CarThesis} for details). We show that (2) and (3) are equivalent. Suppose (2) holds. Then for each $a \in B$, we have
\[
\forall \Box \exists a =\forall \Box \forall \exists a= \Box \forall \exists a = \Box \exists a.
\]
Using $\forall \Box \exists a = \Box \exists a$ twice, we obtain
\[
\exists \Box \exists a = \exists \forall \Box \exists a=  \forall \Box \exists a=\Box \exists a,
\]
yielding (3). Proving (2) from (3) is analogous.
\end{proof}

\begin{remark}
As noted above, the inequality $\Box \forall a \leq \forall \Box a$ is equivalent to the equality $\forall \Box \forall a = \Box \forall a$. This yields that the set $B_0$ of $\forall$-fixpoints of an $\ms$-algebra $(B, \Box, \forall)$ forms an $\sfour$-subalgebra of $(B, \Box)$ such that $\forall$ is the right adjoint to the embedding $B_0 \to B$. Moreover, up to isomorphism each $\ms$-algebra arises this way. This is similar to the case of monadic Heyting algebras (see Remark~\ref{rem:adjoints}).
\end{remark}

The Lindenbaum-Tarski construction yields that $\ms$-algebras provide a sound and complete algebraic semantics for $\ms$.

The relational semantics for $\ms$ was first introduced by Esakia \cite{Esa88}.

\begin{definition}\label{def:ms-frame}
An \textit{$\ms$-frame} is a triple $\mathfrak F=(X,R,E)$ where $X$ is a set, $R$ is a quasi-order, $E$ is an equivalence relation, and
the following commutativity condition is satisfied:
\begin{equation}\label{E}
\tag{E}
(\forall x,y,z \in X)(x E y \; \& \; y R z) \Rightarrow (\exists u \in X)(x R u \; \& \; u E z).
\end{equation}
\end{definition}

\begin{figure}[!ht]
\begin{center}
\begin{tikzpicture}
\node [left] at (0,0) {$x$};
\node [left] at (0,2) {$u$};
\node [right] at (2,0) {$y$};
\node [right] at (2,2) {$z$};
\draw [dashed, ->] (0,0) -- (0,2);
\draw [->] (2,0) -- (2,2);
\draw [dashed] (0,2) -- (2,2);
\draw  (0,0) -- (2,0);
\node [above] at (1,2) {$E$};
\node [above] at (1,0) {$E$};
\node [left] at (0,1) {$R$};
\node [right] at (2,1) {$R$};
\end{tikzpicture}
\end{center}
\caption{Condition (E).}
\end{figure}
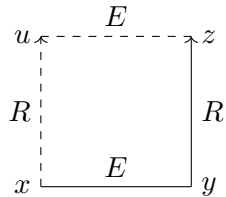

For an $\ms$-frame $\mathfrak F=(X,R,E)$, let $\wp(X)$ be the powerset of $X$ and for $U\in\wp(X)$ let
\[
\Box_R(U)=X\setminus R^{-1}[X\setminus U] \mbox{ and } \forall_E(U)=X\setminus E[X\setminus U].
\]
Since $R$ is a quasi-order, $(\wp(X),\Box_R)$ is an $\sfour$-algebra; and since $E$ is an equivalence relation, $(\wp(X),\forall_E)$ is an $\sfive$-algebra (see~\cite[Thm.~3.5]{JT51}). In addition, the commutativity condition yields that $\mathfrak F^+:=(\wp(X),\Box_R,\forall_E)$ is an $\ms$-algebra.

In fact, as in the case of monadic Heyting algebras, each $\ms$-algebra $\mathfrak B=(B,\Box,\forall)$ is isomorphic to a subalgebra of
$\mathfrak F^+$ for some $\ms$-frame $\mathfrak F$. We can take $\mathfrak F$ to be the canonical frame of $\mathfrak B$. Let $H$ be the set of $\Box$-fixpoints and $B_0$ the set of $\forall$-fixpoints. Then $H$ is a Heyting algebra which is a bounded sublattice of $B$, and $B_0$ is an $\sfour$-subalgebra of $(B, \Box)$.

\begin{remark} \label{rem:B_0 in F^+ for ms}
If $\mathfrak B=\mathfrak F^+$, then the elements of $H$ are the $R$-upsets of $\mathfrak{F}$ and the elements of $B_0$ are the $E$-saturated subsets of $\mathfrak{F}$ (that is, unions of $E$-equivalence classes).
\end{remark}

\begin{definition}\label{def:esakia}
Let $\mathfrak B=(B,\Box,\forall)$ be an $\ms$-algebra. The \emph{canonical frame} of $\mathfrak B$ is the frame
$\mathfrak B_+=(X_{\mathfrak B},R_{\mathfrak B},E_{\mathfrak B})$ where
$X_{\mathfrak B}$ is the set of ultrafilters of $B$, $x R_{\mathfrak B} y$ iff
$x \cap H \subseteq y$ (equivalently, $x \cap H \subseteq y \cap H$), and $x E_{\mathfrak B} y$ iff $x\cap B_0=y\cap B_0$.
\end{definition}

\begin{lemma}\label{lem:canonical frame for ms}
If $\mathfrak B$ is an $\ms$-algebra, then $\mathfrak B_+$ is an $\ms$-frame.
\end{lemma}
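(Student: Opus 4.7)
The plan is to verify the three defining properties of an $\ms$-frame separately: reflexivity and transitivity of $R_\mathfrak{B}$, that $E_\mathfrak{B}$ is an equivalence relation, and the commutativity condition~\eqref{E}. The first two are immediate from the definitions, since $R_\mathfrak{B}$ is built from the inclusion order on $H$-traces of ultrafilters and $E_\mathfrak{B}$ is built from equality of $B_0$-traces. All the work sits in verifying~\eqref{E}.

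Given $x E_\mathfrak{B} y$ and $y R_\mathfrak{B} z$, I would construct the required $u$ by an ultrafilter extension: let $F$ be the filter of $B$ generated by $(x \cap H) \cup (z \cap B_0)$, and assuming $F$ is proper, extend it to an ultrafilter $u$ of $B$. The containment $x \cap H \subseteq u$ then gives $x R_\mathfrak{B} u$ at once. Moreover, because $B_0$ is a boolean subalgebra of $B$ (the $\forall$-fixpoints of an $\sfive$-algebra form a boolean subalgebra), both $z \cap B_0$ and $u \cap B_0$ are ultrafilters of $B_0$, and the inclusion $z \cap B_0 \subseteq u \cap B_0$ forces equality, yielding $u E_\mathfrak{B} z$.

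The main obstacle is therefore showing that $F$ is proper. Since $x \cap H$ and $z \cap B_0$ are each closed under meets, it suffices to rule out elements $a \in x \cap H$ and $b \in z \cap B_0$ with $a \wedge b = 0$. Suppose for contradiction such $a,b$ exist. Then $a \leq \neg b$, and applying $\Box$ together with $a = \Box a$ yields $\Box \neg b \in x$. Since $B_0$ is closed under negation, $\neg b \in B_0$, and then Lemma~\ref{lem:equivalent axioms ms-alg}(2) applied to $\neg b$ gives $\forall \Box \neg b = \Box \forall \neg b = \Box \neg b$, so $\Box \neg b \in B_0$. Now $x E_\mathfrak{B} y$ transports $\Box \neg b$ into $y$, and since $\Box \neg b \in H$, the relation $y R_\mathfrak{B} z$ transports it further into $z$. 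Finally $\Box \neg b \leq \neg b$ places $\neg b \in z$, contradicting $b \in z$. This contradiction establishes that $F$ is proper, completing the proof.
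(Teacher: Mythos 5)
Your proof is correct and follows essentially the same route as the paper: the same filter $F$ generated by $(x\cap H)\cup(z\cap B_0)$, the same properness argument via $\Box\neg b$, and the same ultrafilter extension. The only cosmetic difference is that you justify $\Box\neg b\in B_0$ by invoking Lemma~\ref{lem:equivalent axioms ms-alg}(2) directly, where the paper cites the fact that $B_0$ is an $\sfour$-subalgebra of $(B,\Box)$ -- these amount to the same thing.
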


\begin{proof}
Since $(B,\Box)$ is an $\sfour$-algebra, $R_{\mathfrak B}$ is a quasi-order (see~\cite[Thm.~3.14]{JT51}); and since $(B,\forall)$ is an $\sfive$-algebra, $E_{\mathfrak B}$ is an equivalence relation (see~\cite[Thm.~3.18]{JT51}).
It remains to show that Definition~\ref{def:ms-frame}(E) is satisfied. Let $x,y,z \in X_{\mathfrak B}$ be such that $x E_{\mathfrak B} y$ and $y R_{\mathfrak B} z$. This means that $x \cap B_0=y \cap B_0$ and $y \cap H \subseteq z$. Let $F$ be the filter of $\mathfrak B$ generated by $(x \cap H) \cup (z \cap B_0)$. We show that $F$ is proper. Otherwise, since $x \cap H$ and $z \cap B_0$ are closed under meets, there are $a \in x \cap H$ and $b \in z \cap B_0$ such that $a \meet b =0$. Therefore, $a \le \neg b$. Thus, $a = \Box a \le \Box \neg b$, so $\Box \neg b \in x$. Since $B_0$ is an $\sfour$-subalgebra of $(B, \Box)$ and $b \in B_0$, we have $\Box \neg b \in B_0$. This yields $\Box \neg b \in x \cap B_0=y \cap B_0$, which implies $\Box \neg b \in y \cap H \subseteq z$. Therefore, $\neg b \in z$ which contradicts $b \in z$. Thus, $F$ is proper, and so there is an ultrafilter $u$ of $B$ such that $F\subseteq u$. Consequently, $x\cap H\subseteq u$ and $z\cap B_0\subseteq u\cap B_0$. Since $z \cap B_0$ and $u \cap B_0$ are both ultrafilters of $B_0$, we conclude that $z\cap B_0= u\cap B_0$. Thus, there is $u\in X_{\mathfrak B}$ with $xR_{\mathfrak B}u$ and $uE_{\mathfrak B}z$.
\end{proof}

\begin{definition}
We call an $\ms$-frame \textit{canonical} if it is isomorphic to $\mathfrak{B}_+$ for some $\ms$-algebra $\mathfrak{B}$.
\end{definition}

For an $\ms$-algebra $\mathfrak{B}$, it follows from \cite[Thm.~3.14]{JT51} that the Stone map $\beta:B \to \wp(X_{\mathfrak B})$ is a one-to-one homomorphism of $\ms$-algebras. Thus, we arrive at the following representation theorem.

\begin{proposition}
Each $\ms$-algebra $\mathfrak B$ is isomorphic to a subalgebra of $(\mathfrak B_+)^+$.
\end{proposition}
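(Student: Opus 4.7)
The plan is to prove that the Stone map
\[
\beta : B \to \wp(X_{\mathfrak B}), \qquad \beta(a)=\{x\in X_{\mathfrak B} \mid a\in x\},
\]
is an injective homomorphism of $\ms$-algebras. Injectivity and the fact that $\beta$ is a boolean homomorphism follow from classical Stone duality, and the Jónsson--Tarski representation \cite[Thm.~3.14]{JT51} applied to the $\sfour$-reduct $(B,\Box)$ gives $\beta(\Box a)=\Box_{R_{\mathfrak B}} \beta(a)$, while the analogous fact for the $\sfive$-reduct $(B,\forall)$ gives $\beta(\forall a)=\forall_{E_{\mathfrak B}} \beta(a)$. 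Both claims already appear in the paper, so the substance of the proposition lies in unwinding how the definitions of $R_{\mathfrak B}$ and $E_{\mathfrak B}$ (which use $H$ and $B_0$ rather than the full operators) combine to give the required identities.

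For preservation of $\Box$, I would unpack $x\in\beta(\Box a)$ as $\Box a\in x$, note $\Box a\in H$, and use the definition of $R_{\mathfrak B}$ together with the standard prime-filter lemma: if $\Box a\notin x$, then the filter of $H$ generated by $(x\cap H)\cup\{b\in H\mid b\wedge a=0\text{ for some...}\}$--more cleanly, one extends $x\cap H$ together with $\neg a$ in a suitable way to produce $y$ with $xR_{\mathfrak B}y$ and $a\notin y$. The key step that I expect to write out carefully is preservation of $\forall$. The easy direction: if $\forall a\in x$, then since $\forall a\in B_0$ and $\forall a\le a$, any $y$ with $x\cap B_0=y\cap B_0$ satisfies $\forall a\in y$, hence $a\in y$. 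For the converse, suppose $\forall a\notin x$, and consider the filter $F$ of $B$ generated by $(x\cap B_0)\cup\{\neg a\}$. If $F$ were improper, there would be $b\in x\cap B_0$ with $b\le a$; applying $\forall$ and using $\forall b=b$ gives $b\le\forall a$, forcing $\forall a\in x$, a contradiction. Hence $F$ extends to an ultrafilter $y$; since $x\cap B_0\subseteq y\cap B_0$ and both are ultrafilters of $B_0$, they coincide, so $xE_{\mathfrak B}y$ and $a\notin y$.

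This verifies $\beta(\forall a)=\forall_{E_{\mathfrak B}}\beta(a)$, and combined with the $\Box$-clause and the boolean-algebra case, $\beta$ is an embedding of $\mathfrak B$ into $(\mathfrak B_+)^+$. The only delicate point is the last ultrafilter extension: one must use that $B_0$ is a boolean subalgebra (so its ultrafilters cannot be properly contained in one another), which is automatic from $(B,\forall)$ being an $\sfive$-algebra. Everything else is bookkeeping once the canonical-frame construction from Definition~\ref{def:esakia} and Lemma~\ref{lem:canonical frame for ms} are in place.
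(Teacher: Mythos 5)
Your proposal is correct and follows the same route as the paper: the paper's proof consists of observing that the Stone map $\beta$ is a one-to-one homomorphism of $\ms$-algebras, citing J\'onsson--Tarski for the $\Box$- and $\forall$-clauses, whereas you simply write out those two filter-extension arguments explicitly (and correctly). The only cosmetic issue is the garbled first attempt at the $\Box$-case before you settle on extending $(x\cap H)\cup\{\neg a\}$, which is indeed the clean argument.
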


\begin{remark}
To recover the image of $\mathfrak B$ in $\wp(X_\mathfrak{B})$ we need to endow $X_\mathfrak{B}$ with a Stone topology. This leads to the notion of \emph{perfect $\ms$-frames} and a duality between the category of $\ms$-algebras and the category of perfect $\ms$-frames (see \cite{CarThesis} for details). When $\mathfrak{B}$ is finite, its embedding into $(\mathfrak B_+)^+$ is an isomorphism, and hence the categories of finite $\ms$-algebras and finite $\ms$-frames are dually equivalent.
\end{remark}

As an immediate consequence of the above considerations, we obtain that if $\mathfrak B$ is an $\ms$-algebra, then so is $(\mathfrak B_+)^+$. Thus, we have:

\begin{corollary}
$\ms$ is canonical.
\end{corollary}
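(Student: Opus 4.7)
My plan is to observe that the corollary is essentially immediate from the two ingredients assembled just above: Lemma~\ref{lem:canonical frame for ms}, which says that the canonical frame of an $\ms$-algebra is an $\ms$-frame, and the (already established) fact that the complex algebra $\mathfrak F^+$ of any $\ms$-frame $\mathfrak F$ is an $\ms$-algebra. Combining these with $\mathfrak F=\mathfrak B_+$ gives the desired conclusion, entirely in parallel with Corollary~\ref{prop:canonical} for $\mipc$.

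Concretely, I would fix an $\ms$-algebra $\mathfrak B=(B,\Box,\forall)$. By Lemma~\ref{lem:canonical frame for ms}, the canonical frame $\mathfrak B_+=(X_{\mathfrak B},R_{\mathfrak B},E_{\mathfrak B})$ is an $\ms$-frame. Then I would invoke the discussion preceding Definition~\ref{def:esakia}, which shows that for any $\ms$-frame $\mathfrak F=(X,R,E)$ the algebra $\mathfrak F^+=(\wp(X),\Box_R,\forall_E)$ is an $\ms$-algebra: $(\wp(X),\Box_R)$ is an $\sfour$-algebra because $R$ is a quasi-order, $(\wp(X),\forall_E)$ is an $\sfive$-algebra because $E$ is an equivalence relation, and the commutativity axiom $\Box_R\forall_E(U)\subseteq\forall_E\Box_R(U)$ of Definition~\ref{def:ms-alg}(3) is forced by condition~(E) of Definition~\ref{def:ms-frame}. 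Applying this with $\mathfrak F=\mathfrak B_+$ yields that $(\mathfrak B_+)^+$ is an $\ms$-algebra, which is exactly what canonicity requires.

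In short, there is no real obstacle here: the substantive work has already been done inside the proof of Lemma~\ref{lem:canonical frame for ms}, where the filter-extension argument verifies condition~(E) for $\mathfrak B_+$ (this is the only nontrivial step, since verifying (E) from the adjunction between $B_0$ and $B$ requires the use of $\Box\forall a\le\forall\Box a$ via $\Box\neg b\in B_0$). The present corollary is thus purely a packaging step that combines Lemma~\ref{lem:canonical frame for ms} with the complex-algebra construction to deliver canonicity of $\ms$.
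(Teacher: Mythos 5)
Your proof is correct and follows exactly the paper's route: the corollary is recorded there as an immediate consequence of Lemma~\ref{lem:canonical frame for ms} together with the earlier observation that $\mathfrak F^+$ is an $\ms$-algebra for every $\ms$-frame $\mathfrak F$, applied to $\mathfrak F=\mathfrak B_+$. You also correctly identify that the only substantive work lies in the filter argument verifying condition~(E) inside Lemma~\ref{lem:canonical frame for ms}.
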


A \emph{valuation} on an $\ms$-frame $\mathfrak F=(X,R,E)$ is a map $v$ associating a subset of $X$ to each propositional letter of
$\mathcal L_{\Box\forall}$. Then the boolean connectives are interpreted as usual,
\begin{equation*}
\begin{array}{l c l}
x \vDash_v \Box \varphi & \text{ iff } & (\forall y \in X) (x R y \, \Rightarrow \, y \vDash_v \varphi ), \\
x \vDash_v \forall \varphi & \text{ iff } & (\forall y \in X) (x E y \, \Rightarrow \, y \vDash_v \varphi ) .
\end{array}
\end{equation*}
As usual, we say that $\varphi$ is \emph{valid} in $\mathfrak F$, in symbols $\mathfrak F \vDash \varphi$, if $x \vDash_v \varphi$ for every
valuation $v$ and $x \in X$.

Soundness of $\ms$ with respect to this semantics is straightforward to prove, and completeness follows from the algebraic completeness and the representation theorem for $\ms$-algebras proved above.

\begin{theorem}\label{thm:relational semantics for ms4}
$\ms \vdash \varphi$ iff $\mathfrak F \vDash \varphi$ for every $\ms$-frame $\mathfrak F$.
\end{theorem}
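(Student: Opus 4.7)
The plan is to establish soundness and completeness separately, with soundness being a direct verification on frames and completeness being reduced to the algebraic completeness and representation theorem proved just above.

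For soundness, I would proceed by showing that for any $\ms$-frame $\mathfrak F = (X, R, E)$, its complex algebra $\mathfrak F^+ = (\wp(X), \Box_R, \forall_E)$ is an $\ms$-algebra (this is already noted in the excerpt, following from $R$ being a quasi-order, $E$ being an equivalence relation, and the commutativity condition (E) yielding $\Box_R \forall_E \leq \forall_E \Box_R$). Given any valuation $v$ on $\mathfrak F$, the Tarski truth set $\llbracket \varphi \rrbracket_v := \{ x \in X \mid x \vDash_v \varphi \}$ is computed recursively using exactly the operations of $\mathfrak F^+$; in particular $\llbracket \Box \varphi \rrbracket_v = \Box_R \llbracket \varphi \rrbracket_v$ and $\llbracket \forall \varphi \rrbracket_v = \forall_E \llbracket \varphi \rrbracket_v$. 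Since every theorem of $\ms$ is valid in every $\ms$-algebra, we have $\llbracket \varphi \rrbracket_v = X$ whenever $\ms \vdash \varphi$, which gives validity in $\mathfrak F$. Closure under modus ponens and both necessitation rules is standard.

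For completeness, I would argue by contrapositive. Suppose $\ms \nvdash \varphi$. By the algebraic completeness of $\ms$ (obtained via the Lindenbaum-Tarski construction and noted above), there exists an $\ms$-algebra $\mathfrak B = (B, \Box, \forall)$ and a valuation $\mu$ into $B$ such that $\mu(\varphi) \neq 1$. By the representation proposition stated above, the Stone map $\beta : \mathfrak B \to (\mathfrak B_+)^+$ is an injective homomorphism of $\ms$-algebras, so $\beta \circ \mu$ is a valuation into $(\mathfrak B_+)^+$ with $(\beta \circ \mu)(\varphi) = \beta(\mu(\varphi)) \neq X_{\mathfrak B}$. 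Defining the corresponding valuation $v$ on the $\ms$-frame $\mathfrak B_+$ (which is indeed an $\ms$-frame by Lemma~\ref{lem:canonical frame for ms}) by $v(p) = (\beta \circ \mu)(p)$ for each propositional letter $p$, a straightforward induction on the complexity of $\varphi$ shows $\llbracket \varphi \rrbracket_v = (\beta \circ \mu)(\varphi)$. Hence there is some $x \in X_{\mathfrak B}$ with $x \not\vDash_v \varphi$, so $\mathfrak B_+ \nvDash \varphi$.

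There is no real obstacle here: the two nontrivial ingredients—that $\mathfrak F^+$ is an $\ms$-algebra for every $\ms$-frame $\mathfrak F$, and that every $\ms$-algebra embeds into $(\mathfrak B_+)^+$—have been established in the preceding discussion. The only mild care needed is in the semantic matching between the algebraic interpretation in $(\mathfrak B_+)^+$ and the relational interpretation in $\mathfrak B_+$, which amounts to checking that $\beta$ preserves $\Box$ and $\forall$ (equivalently, that $\Box_{R_{\mathfrak B}}$ and $\forall_{E_{\mathfrak B}}$ on $\wp(X_{\mathfrak B})$ agree with $\beta \circ \Box$ and $\beta \circ \forall$ on the image of $\beta$); this is precisely the content of the Jónsson-Tarski-style representation cited from \cite{JT51} and already built into the definition of $R_{\mathfrak B}$ and $E_{\mathfrak B}$.
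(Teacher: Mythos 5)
Your proposal is correct and follows exactly the route the paper takes: soundness by observing that truth sets in an $\ms$-frame are computed by the operations of its complex algebra $\mathfrak F^+$, and completeness by combining the Lindenbaum--Tarski algebraic completeness with the representation of each $\ms$-algebra as a subalgebra of $(\mathfrak B_+)^+$ via the Stone map. The paper merely states these two steps without elaboration, so your write-up is a faithful expansion of its argument.
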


In addition, $\ms$ has the fmp. While this can be proved directly using algebraic technique, we will derive it as a consequence of the fmp
of a stronger multimodal system in Section~\ref{sec:FMP}.

\subsection{G\"{o}del translation}

We recall that the G\"{o}del translation of $\mipc$ into $\ms$ is defined by
\begin{equation*}
\begin{array}{r c l l}
\bot^t &=& \bot & \\
p^t &=& \Box p & \text{for each propositional letter } p \\
(\varphi \land \psi)^t &=& \varphi^t \land \psi^t &\\
(\varphi \lor \psi)^t &=& \varphi^t \lor \psi^t &\\
(\varphi \to \psi)^t &=& \Box (\neg \varphi^t \lor \psi^t) &\\
(\forall \varphi)^t &=& \Box \forall \varphi^t & \\
(\exists \varphi)^t &=& \exists \varphi^t &
\end{array}
\end{equation*}

It was shown by Fischer-Servi \cite{FS77} that this translation is full and faithful, meaning that
\[
\mipc\vdash\varphi \mbox{ iff } \ms\vdash \varphi^t.
\]
Fischer-Servi used the translations of $\mipc$ and $\ms$ into
$\iqc$ and
$\qsfour$ respectively, and the predicate version of the G\"{o}del translation. In \cite{FS78a} she gave a different proof of this result using the fmp for $\mipc$. We give yet another proof utilizing relational semantics for $\mipc$ and $\ms$. Our proof generalizes the semantic proof that the G\"odel translation of $\ipc$ into $\sfour$ is full and faithful (see, e.g.,  \cite[Sec.~3.9]{CZ97}). We require the following lemma.

\begin{lemma}\label{lem:translation in ms4 is upset}
For any formula $\chi$ of $\mathcal L_{\forall\exists}$, we have
\[
\ms \vdash \chi^t \to \Box \chi^t.
\]
\end{lemma}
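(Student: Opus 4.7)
The plan is to prove the statement by induction on the complexity of $\chi$. In several cases the translated formula already begins with a $\Box$, so the conclusion is immediate from the $\sfour$-axiom $\Box\alpha \to \Box\Box\alpha$ for $\Box$; the interesting cases are the propositional connectives and, above all, the existential quantifier.

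First I would dispose of the easy cases. For $\chi=\bot$ the implication $\bot \to \Box\bot$ is trivial. For $\chi=p$ the translation is $\Box p$, and $\Box p \to \Box\Box p$ is the $\sfour$-axiom. The same remark handles $\chi=\varphi\to\psi$, $\chi=\forall\varphi$, since $(\varphi\to\psi)^t = \Box(\neg\varphi^t\vee\psi^t)$ and $(\forall\varphi)^t = \Box\forall\varphi^t$ are both of the form $\Box\alpha$. For $\chi=\varphi\land\psi$, the induction hypothesis applied to $\varphi$ and $\psi$ together with the fact that $\Box$ distributes over $\land$ gives $\varphi^t\land\psi^t \to \Box\varphi^t\land\Box\psi^t \leftrightarrow \Box(\varphi^t\land\psi^t)$. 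For $\chi=\varphi\vee\psi$, the induction hypothesis gives $\varphi^t\vee\psi^t \to \Box\varphi^t\vee\Box\psi^t$, and monotonicity of $\Box$ yields $\Box\varphi^t\vee\Box\psi^t \to \Box(\varphi^t\vee\psi^t)$.

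The main obstacle is the case $\chi=\exists\varphi$, where $(\exists\varphi)^t = \exists\varphi^t$ is not a priori boxed. Here I would combine the induction hypothesis with the axiom $\T$ for $\Box$ to get the stronger statement $\ms\vdash \varphi^t \leftrightarrow \Box\varphi^t$, and then invoke the equivalent form (5) of the commutativity axiom from Lemma~\ref{lem:equivalent axioms ms-alg}, namely $\exists\Box a \leq \Box\exists a$. Substituting $\varphi^t$ for $a$ and using the equivalence $\varphi^t \leftrightarrow \Box\varphi^t$, we obtain
\[
\exists\varphi^t \leftrightarrow \exists\Box\varphi^t \to \Box\exists\varphi^t,
\]
which is exactly what is needed. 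This is the only place in the induction where the distinctive $\ms$-axiom (as opposed to mere $\sfour$- and $\sfive$-axioms) is required, which is philosophically fitting since it is precisely the non-symmetric behavior of $\exists$ that makes the G\"odel translation work.

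The proof is therefore a straightforward induction, with the whole weight of the argument resting on Lemma~\ref{lem:equivalent axioms ms-alg}(5) to handle the $\exists$-case. No appeal to relational semantics is needed, although one could alternatively verify $\chi^t\to\Box\chi^t$ by showing semantically that the extension of $\chi^t$ in any $\ms$-frame is an $R$-upset, using condition (E) for the $\exists$-step.
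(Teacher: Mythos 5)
Your proposal is correct and follows essentially the same route as the paper: an induction on $\chi$ in which every case except $\exists$ is routine, and the $\exists$-case is settled by the derived principle $\exists\Box a\le\Box\exists a$ (which the paper re-derives algebraically in the proof itself, whereas you cite it as form (5) of Lemma~\ref{lem:equivalent axioms ms-alg} --- an equally valid source within the paper). The only cosmetic difference is that you upgrade the induction hypothesis to an equivalence $\varphi^t\leftrightarrow\Box\varphi^t$ via the reflexivity axiom, which is harmless but unnecessary, since monotonicity of $\exists$ applied to $\varphi^t\to\Box\varphi^t$ already gives $\exists\varphi^t\to\exists\Box\varphi^t$.
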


\begin{proof}
We first show that $\ms \vdash \exists \Box \varphi \to \Box \exists \varphi$ for any formula $\varphi$ of $\mathcal L_{\Box\forall}$. For this, by algebraic completeness, it is sufficient to prove that the inequality $\exists \Box a \leq \Box \exists a$ holds in every $\ms$-algebra $(B, \Box, \forall)$.
Let $a \in B$. We have
\[
\exists \Box a \leq \exists \Box \exists a = \exists \Box \forall \exists a \leq \exists \forall \Box \exists a = \forall \Box \exists a \leq \Box \exists a.
\]
We are now ready to prove that $\ms \vdash \chi^t \to \Box \chi^t$ by induction on the complexity of $\chi$. This is obvious when $\chi=\bot$. The cases when $\chi$ is $p$, $\varphi \to \psi$, or $\forall \varphi$ follow from the axiom $\Box \varphi \to \Box \Box \varphi$. We next consider the cases when $\chi$ is $\varphi \wedge \psi$ or $\varphi \vee \psi$. Suppose that the claim is true for $\varphi$ and $\psi$, so $\varphi^t \to \Box \varphi^t$ and $\psi^t \to \Box \psi^t$ are theorems of $\ms$. Then $\varphi^t \wedge \psi^t \to \Box (\varphi^t \wedge \psi^t)$ and $\varphi^t \vee \psi^t \to \Box (\varphi^t \vee \psi^t)$ are also theorems of $\ms$. Finally, if $\chi$ is $\exists \varphi$ and $\ms \vdash \varphi^t\to \Box \varphi^t$, then $\ms \vdash \exists \varphi^t \to \exists \Box \varphi^t$. Therefore, since $\ms \vdash \exists \Box \varphi^t \to \Box \exists \varphi^t$, we conclude that $\ms \vdash \exists \varphi^t \to \Box \exists \varphi^t$.
\end{proof}

In the next definition we generalize to $\ms$-frames the well-known definition of skeleton (see, e.g., \cite[Sec.~3.9]{CZ97}).

\begin{definition}\label{def:skeleton of ms4-frames}
Let $\mathfrak F=(X,R,E)$ be an $\ms$-frame. Define the relation
$Q_E$ on $X$ by setting $x Q_E y$ iff $(\exists  z \in X )(xRz \ \& \ zEy)$. Then the \textit{skeleton} $\mathfrak{F}^t=(X',R',Q')$ of $\mathfrak{F}$ is defined as follows. Let $\sim$ be the equivalence relation on $X$ given by $x \sim y$ iff $xRy$ and $yRx$. We let $X'$ be the set of equivalence classes of $\sim$, and define $R'$ and $Q'$ on $X'$ by $[x] R' [y]$ iff $xRy$ and $[x] Q' [y]$ iff $xQ_Ey$.
\end{definition}

\begin{proposition} \label{prop:skeleton of ms4-frames}
\begin{enumerate}
\item[]
\item If $\mathfrak F$ is an $\ms$-frame, then $\mathfrak{F}^t$ is an $\mipc$-frame.
\item For each valuation $v$ on $\mathfrak{F}$ there is a valuation $v'$ on $\mathfrak{F}^t$ such that for each $x\in \mathfrak F$ and $\mathcal{L}_{\forall \exists}$-formula $\varphi$, we have
\[
\mathfrak{F}^t, [x] \vDash_{v'} \varphi \mbox{ iff } \mathfrak{F}, x \vDash_v \varphi^t.
\]
\item For each $\mathcal{L}_{\forall \exists}$-formula $\varphi$, we have
\[
\mathfrak{F}^t \vDash \varphi \mbox{ iff } \mathfrak{F} \vDash \varphi^t.
\]
\item For each $\mipc$-frame $\mathfrak{G}$ there is an $\ms$-frame $\mathfrak{F}$ such that $\mathfrak{G}$ is isomorphic to $\mathfrak{F}^t$.
\end{enumerate}
\end{proposition}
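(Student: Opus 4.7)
The plan is to verify the four items in order, relying on condition~(E) for $\ms$-frames and Lemma~\ref{lem:translation in ms4 is upset} at the one nontrivial step of the inductive proof of (2). For (1), I would first check that $R'$ and $Q'$ are well defined on $\sim$-classes. For $R'$ this is the standard $\sfour$-skeleton argument. For $Q'$, well-definedness in the first coordinate is immediate from transitivity of $R$, whereas in the second coordinate it requires (E): if $xQ_Ey$ via $xRz$ and $zEy$, and $y\sim y'$, then (E) applied to $zEy$ and $yRy'$ yields $u$ with $zRu$ and $uEy'$, whence $xRu$ and $uEy'$. Reflexivity of $Q'$ is immediate, and transitivity follows by the same kind of (E)-chaining from $xRu$, $uEy$, $yRv$, $vEz$ to $xRw$, $wEz$. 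Condition (O1) is automatic (take $z=y$ in $Q_E$), and (O2) is built into the definition of $Q_E$ since $zEy$ gives $[z]E_{Q'}[y]$.

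For (2), I would set $v'(p)=\{[x]\mid x\in v(\Box p)\}$; well-definedness on $\sim$-classes and the $R'$-upset property both follow from $R$-saturation of $v(\Box p)$. The induction on $\varphi$ is routine for $\bot$, $\wedge$, $\vee$, $\to$, and $\forall$; the $\forall$-case matches because $[x]Q'[y]$ unfolds precisely to $\exists z\,(xRz\;\&\;zEy)$, which is the semantics of $\Box\forall\varphi^t$. The subtle case is $\exists$. The direction from $\mathfrak F,x\vDash_v\exists\varphi^t$ to $\mathfrak F^t,[x]\vDash_{v'}\exists\varphi$ is straightforward, since $xEy$ forces $[x]E_{Q'}[y]$ by choosing the witness in $Q_E$ to be $x$ on one side and $y$ on the other. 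For the converse, given $[x]E_{Q'}[y]$ and $y\vDash_v\varphi^t$, I would invoke Lemma~\ref{lem:translation in ms4 is upset} to upgrade $y\vDash_v\varphi^t$ to $y\vDash_v\Box\varphi^t$, then extract from $[y]Q'[x]$ a witness $w$ with $yRw$ and $wEx$; it follows that $w\vDash_v\varphi^t$ and $xEw$, as required.

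Item (3) is a routine corollary of (2): the forward direction feeds the $v'$ supplied by (2) into $\mathfrak F^t\vDash\varphi$, and the converse starts from an arbitrary valuation $v'$ on $\mathfrak F^t$, pulls it back via $v(p)=\{y\mid [y]\in v'(p)\}$, and verifies that the $v'$ produced from this $v$ by the above recipe recovers the original because $v'(p)$ is an $R'$-upset and $R$ is reflexive. For (4), given an $\mipc$-frame $\mathfrak G=(Y,R,Q)$, I would take $\mathfrak F=(Y,R,E_Q)$; condition (E) follows by chaining $xE_Qy$ and $yRz$ through (O1) of $\mathfrak G$ into $xQz$ and then invoking (O2) of $\mathfrak G$. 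Since $R$ is a partial order in $\mathfrak G$, $\sim$ on $\mathfrak F$ is the identity, so $\mathfrak F^t$ has underlying set $Y$; the relation $R'$ equals $R$ by construction, and $Q'=Q_{E_Q}$ equals $Q$ again by (O1) and (O2) of $\mathfrak G$. The main obstacle throughout is the $\exists$-case of (2): since $E_{Q'}$-classes are in general strictly coarser than $E$-classes, a witness for $\exists\varphi$ sitting in the $E_{Q'}$-class of $[x]$ need not lie in any $E$-neighbour of $x$, and Lemma~\ref{lem:translation in ms4 is upset} is exactly what forces the persistence along $R$ needed to slide the witness into an $E$-partner of $x$.
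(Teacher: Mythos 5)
Your proposal is correct and follows essentially the same route as the paper: the same definition of $v'$ (as the truth set of $\Box p$), the same induction with Lemma~\ref{lem:translation in ms4 is upset} doing the work in the $\exists$-case, and the same construction $(X,R,E_Q)$ for item (4). The only cosmetic difference is that you handle the $\exists$-clause pointwise (sliding the witness along $R$ via persistence of $\varphi^t$), whereas the paper phrases the same idea through the identity $E[U]=Q_E[U]$ for $R$-upsets $U$.
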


\begin{proof}
(1). It is well known that $(X',R')$ is an intuitionistic Kripke frame. That $Q'$ is well defined follows from Condition (E). Showing that $Q'$ is a quasi-order, and that (O1) and (O2) hold in $\mathfrak{F}^t$ is straightforward.

(2). Define $v'$ on $\mathfrak{F}^t$ by $v'(p)=\{ [x] \in X' \mid R[x] \subseteq v(p)\}$. We show that $\mathfrak{F}^t, [x] \vDash_{v'} \varphi$ iff $\mathfrak{F},x \vDash_v \varphi^t$ by induction on the complexity of $\varphi$. Since $v'(p)= \{ [x] \mid \mathfrak{F},x \vDash_v \Box p \}$, the claim is obvious when $\varphi$ is a propositional letter.
We prove the claim for $\varphi$ of the form $\forall \psi$ and $\exists \psi$ since the other cases are well known. Suppose $\varphi=\forall \psi$. By the definition of $Q'$ and induction hypothesis, we have
\begin{align*}
\mathfrak{F}^t, [x] \vDash_{v'} \forall \psi &\mbox{ iff } (\forall [y] \in X')([x]Q'[y] \, \Rightarrow \, \mathfrak{F}^t, [y] \vDash_{v'} \psi)\\
 &\mbox{ iff } (\forall y \in X)(xQ_Ey \, \Rightarrow \, \mathfrak{F}^t, [y] \vDash_{v'} \psi)\\
 &\mbox{ iff } (\forall y \in X)(xQ_Ey \, \Rightarrow \, \mathfrak{F}, y \vDash_v \psi^t).
\end{align*}
On the other hand,
\begin{align*}
\mathfrak{F}, x \vDash_v (\forall \psi)^t &\mbox{ iff } \mathfrak{F}, x \vDash_v \Box \forall \psi^t\\
 &\mbox{ iff } (\forall z \in X)(xRz \, \Rightarrow \, (\forall y \in X)(zEy \, \Rightarrow \, \mathfrak{F}, y \vDash_v \psi^t))\\
 &\mbox{ iff } (\forall y \in X)(xQ_Ey \, \Rightarrow \, \mathfrak{F}, y \vDash_v \psi^t).
\end{align*}
Thus, $\mathfrak{F}^t, [x] \vDash_{v'} \forall \psi$ iff $\mathfrak{F}, x \vDash_v (\forall \psi)^t$.

Suppose $\varphi=\exists \psi$. As noted in Remark~\ref{rem:E_Q and Q on R-upsets}, $Q'$ and $E_{Q'}$ coincide on $R'$-upsets, and it is straightforward to see by induction that the set $\{ [y] \mid \mathfrak{F}^t, [y] \vDash_{v'} \psi \}$ is an $R'$-upset. Therefore, by the induction hypothesis,
\begin{align*}
\mathfrak{F}^t, [x] \vDash_{v'} \exists \psi &\mbox{ iff } (\exists [y] \in X')([x]E_{Q'}[y] \; \& \; \mathfrak{F}^t, [y] \vDash_{v'} \psi)\\
&\mbox{ iff } [x] \in E_{Q'}[\{ [y] \mid \mathfrak{F}^t, [y] \vDash_{v'} \psi \}]\\
&\mbox{ iff } [x] \in Q'[\{ [y] \mid \mathfrak{F}^t, [y] \vDash_{v'} \psi \}]\\
 &\mbox{ iff } x \in Q_E[\{ y \mid \mathfrak{F}^t, [y] \vDash_{v'} \psi \}]\\
  &\mbox{ iff } x \in Q_E[\{ y \mid \mathfrak{F}, y \vDash_v \psi^t \}].
\end{align*}
On the other hand,
\begin{align*}
\mathfrak{F}, x \vDash_v (\exists \psi)^t &\mbox{ iff } \mathfrak{F}, x \vDash_v \exists \psi^t \\
 &\mbox{ iff } (\exists y \in X)(xEy \; \& \; \mathfrak{F}, y \vDash_v \psi^t)\\
 &\mbox{ iff } x \in E[\{ y \mid \mathfrak{F}, y \vDash_v \psi^t \}]\\
 &\mbox{ iff } x \in Q_E[\{ y \mid \mathfrak{F}, y \vDash_v \psi^t \}]
\end{align*}
since, by Lemma~\ref{lem:translation in ms4 is upset}, the set $\{ y \mid \mathfrak{F}, y \vDash_v \psi^t \}$ is an $R$-upset, and
$E$ and $Q_E$ coincide on $R$-upsets. Thus, $\mathfrak{F}^t, [x] \vDash_{v'} \exists \psi$ iff $\mathfrak{F}, x \vDash_v (\exists \psi)^t$.

(3). If $\mathfrak{F} \nvDash \varphi^t$, then there is a valuation $v$ on $\mathfrak{F}$ such that $\mathfrak{F}, x \nvDash_v \varphi^t$ for some $x \in X$. By (2), $v'$ is a valuation on $\mathfrak{F}^t$ such that $\mathfrak{F}^t, [x] \nvDash_{v'} \varphi$. Therefore, $\mathfrak{F}^t \nvDash \varphi$. If $\mathfrak{F}^t \nvDash \varphi$, then there is a valuation $w$ on $\mathfrak{F}^t$ and $[x] \in X'$ such that $\mathfrak{F}^t, [x] \nvDash_w \varphi$. Let $v$ be the valuation on $\mathfrak{F}$ given by $v(p)=\{x \mid [x] \in w(p)\}$. Since $\mathfrak{F}^t$ is an $\mipc$-frame, $w(p)$ is an $R'$-upset of $\mathfrak{F}^t$ for each $p$. So $v(p)$ is an $R$-upset of $\mathfrak{F}$ for each $p$. Therefore, $w=v'$ because
\[
v'(p)=\{ [x] \in X' \mid R[x] \subseteq v(p) \}=\{ [x] \in X' \mid x \in v(p) \}=w(p).
\]
Thus, $\mathfrak{F}^t, [x] \nvDash_{v'} \varphi$. By (2), $\mathfrak{F}, x \nvDash_v \varphi^t$. Consequently, $\mathfrak{F} \nvDash \varphi^t$.

(4). Let $\mathfrak{G}=(X,R,Q)$ be an $\mipc$-frame. We show that  $\mathfrak{F}=(X,R,E_Q)$ is an $\ms$-frame. If $xE_Qy$ and $yRz$, then by definition of $E_Q$ and condition (O1) of $\mipc$-frames, $xQy$ and $yQz$. Since $Q$ is transitive, $xQz$. Condition (O2) then implies that there is $u \in X$ with $xRu$ and $u E_Q z$. Thus, $\mathfrak F$ is an $\ms$-frame. Since $R$ is a partial order, $\sim$ is the identity relation. It then follows from condition (O2) that $Q=Q_{E_Q}$, and hence $\mathfrak{G}$ is isomorphic to $\mathfrak{F}^t$.
\end{proof}

\begin{remark}
In general, we cannot recover an $\ms$-frame $\mathfrak{F}=(X,R,E)$ from its skeleton $\mathfrak{F}^t$ even if $R$ is a partial order. Indeed, it is not always the case that $E=E_{Q_E}$.
However, if $\mathfrak{F}$ is canonical (and in particular finite), then $E=E_{Q_E}$; see~\cite[Sec.~2]{Bez99} for details.
\end{remark}

\begin{theorem} \label{thm:godel translation}
The G\"{o}del translation of $\mipc$ into $\ms$ is full and faithful; that is, \begin{equation*}
\mipc \vdash\varphi \quad \mbox{ iff } \quad \ms \vdash \varphi^t.
\end{equation*}
\end{theorem}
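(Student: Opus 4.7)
The plan is to derive the biconditional as an immediate consequence of Proposition~\ref{prop:skeleton of ms4-frames}, which has already done all the substantive work. The strategy for both directions is contraposition, combined with the relational completeness of the two logics (Theorems~\ref{thm:relational semantics for mipc} and~\ref{thm:relational semantics for ms4}). The key observation is that Proposition~\ref{prop:skeleton of ms4-frames}(3) gives the semantic equivalence $\mathfrak{F}^t \vDash \varphi \iff \mathfrak{F} \vDash \varphi^t$, while parts (1) and (4) ensure that the classes of $\mipc$-frames and skeletons of $\ms$-frames coincide up to isomorphism.

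For faithfulness, I would assume $\ms \nvdash \varphi^t$. Applying the completeness of $\ms$ yields an $\ms$-frame $\mathfrak{F}$ with $\mathfrak{F} \nvDash \varphi^t$. By Proposition~\ref{prop:skeleton of ms4-frames}(3), $\mathfrak{F}^t \nvDash \varphi$, and by Proposition~\ref{prop:skeleton of ms4-frames}(1), $\mathfrak{F}^t$ is an $\mipc$-frame. Soundness of $\mipc$ then gives $\mipc \nvdash \varphi$, completing this direction.

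For fullness, I would assume $\mipc \nvdash \varphi$. The completeness of $\mipc$ yields an $\mipc$-frame $\mathfrak{G}$ with $\mathfrak{G} \nvDash \varphi$. Proposition~\ref{prop:skeleton of ms4-frames}(4) supplies an $\ms$-frame $\mathfrak{F}$ with $\mathfrak{G} \cong \mathfrak{F}^t$, so $\mathfrak{F}^t \nvDash \varphi$. A second application of Proposition~\ref{prop:skeleton of ms4-frames}(3) then gives $\mathfrak{F} \nvDash \varphi^t$, so by the soundness half of Theorem~\ref{thm:relational semantics for ms4} we conclude $\ms \nvdash \varphi^t$.

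There is no real obstacle at this stage: the technical difficulty was concentrated in Proposition~\ref{prop:skeleton of ms4-frames}, in particular in the induction of part~(2) where Lemma~\ref{lem:translation in ms4 is upset} was needed to handle the $\exists$-case by identifying $E$ with $Q_E$ on $R$-upsets. Once that machinery is in place, the theorem is essentially a bookkeeping exercise packaging those results together with the completeness theorems.
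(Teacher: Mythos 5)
Your proposal is correct and follows essentially the same argument as the paper: both directions are proved by contraposition, using the relational completeness of $\mipc$ and $\ms$ together with parts (1), (3), and (4) of Proposition~\ref{prop:skeleton of ms4-frames}. The paper's own proof is the same bookkeeping you describe, with the substantive work likewise concentrated in that proposition and Lemma~\ref{lem:translation in ms4 is upset}.
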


\begin{proof}
To prove faithfulness, suppose that $\ms \nvdash\varphi^t$. By Theorem~\ref{thm:relational semantics for ms4}, there is an $\ms$-frame $\mathfrak{F}$ such that $\mathfrak{F} \nvDash \varphi^t$. By Proposition~\ref{prop:skeleton of ms4-frames}, $\mathfrak{F}^t$ is an $\mipc$-frame and $\mathfrak{F}^t \nvDash \varphi$. Thus, by Theorem~\ref{thm:relational semantics for mipc}, $\mipc \nvdash \varphi$. For fullness, let $\mipc \nvdash \varphi$. Then there is an $\mipc$ frame $\mathfrak{G}$ such that $\mathfrak{G} \nvDash \varphi$. By Proposition~\ref{prop:skeleton of ms4-frames}(4), there is an $\ms$-frame such that $\mathfrak{G}$ isomorphic to $\mathfrak{F}^t$. Therefore, $\mathfrak{F}^t \nvDash \varphi$. Proposition~\ref{prop:skeleton of ms4-frames}(3) implies that $\mathfrak{F} \nvDash \varphi^t$. Thus, $\ms \nvdash \varphi^t$.
\end{proof}

\begin{remark}
The original proof of McKinsey and Tarski \cite{MT46, MT48} that the G\"odel translation of $\sf IPC$ into $\sf S4$ is full and faithful was algebraic. They proved that the $\Box$-fixpoints of each $\sf S4$-algebra form a Heyting algebra, and that each Heyting algebra arises this way.
In the monadic setting, while we still have that the $\Box$-fixpoints of each $\ms$-algebra form a monadic Heyting algebra, it is not the case that each monadic Heyting algebra arises this way (see~\cite{CarThesis} for details).
Nevertheless, Fischer-Servi~\cite{FS78a} proved that each finite monadic Heyting algebra does.
Thus, while we can prove faithfulness in the same fashion as McKinsey and Tarski, proving fullness requires to first establish the finite model property for $\mipc$.
\end{remark}

\section{Translation of $\mipc$ into $\ts$}\label{sec:TS4}

In this section we introduce a new multimodal tense system $\ts$ in which, as we will show in the next section, $\mipc$ embeds fully and faithfully by a modified G\"odel translation. For this we require to recall the well-known tense system $\st$.

\subsection{$\st$}

The tense logic $\st$ is the extension of the least tense logic $\sf K.t$ in which both tense modalities satisfy the $\sfour$-axioms. This system was studied by several authors. In particular, Esakia \cite{Esa75} showed that an extension of the G\"odel translation embeds the Heyting-Brouwer logic $\sf HB$ of Rauszer \cite{Rau73} into $\st$ fully and faithfully. The language of $\sf HB$ is obtained by enriching the language of $\sf IPC$ by an additional connective of coimplication, and the logic $\sf HB$ is the extension of $\sf IPC$ by the axioms for coimplication, which are dual to the axioms for implication. Wolter \cite{Wol98} extended the celebrated Blok-Esakia Theorem to this setting.

Let $\mathcal L_T$ be the propositional tense language with two modalities $\boxf$ and $\boxp$. As usual, $\boxf$ is interpreted as ``always in the future" and $\boxp$ as ``always in the past." We use the following standard
abbreviations: $\diaf$ for $\neg \boxf \neg$ and $\diap$ for $\neg \boxp \neg$. Then $\diaf$ is interpreted as ``sometime in the future" and $\diap$ as ``sometime in the past."

\begin{definition}
Let $\st$ be the smallest classical bimodal logic containing the $\sfour$-axioms for $\boxf$ and $\boxp$, the tense axioms
\begin{equation*}
\begin{array}{c}
p \to \boxp \diaf p \\
p \to \boxf \diap p
\end{array}
\end{equation*}
and closed under modus ponens, substitution, $\boxf$-necessitation, and $\boxp$-necessitation.
\end{definition}

Algebraic semantics for $\st$ was studied by Esakia \cite{Esa75,Esa78}, where the duality theory for $\sf S4$-algebras was generalized to $\st$-algebras.

\begin{definition}\label{def:ts-algebra}
An $\st$-algebra is a triple $(B, \boxf, \boxp)$ where $(B,\boxf)$, $(B,\boxp)$ are $\sfour$-algebras and for each $a\in B$ we have
\begin{equation}\label{PF}
\tag{PF}
a \le \boxp \diaf a
\end{equation}
\begin{equation}\label{FP}
\tag{FP}
a \le \boxf \diap a
\end{equation}
\end{definition}

The Lindenbaum-Tarski construction yields that $\st$-algebras provide a sound and complete algebraic semantics for $\st$. Relational semantics for $\st$ is given by $\st$-frames.

\begin{definition}
An $\st$-frame is a pair $\mathfrak{F}=(X,Q)$ where $X$ is a set and $Q$ is a quasi-order on $X$.
\end{definition}

Let $\qinv$ be the converse of $Q$. For $U\in\wp(X)$ let
\[
\boxq(U)=X\setminus Q^{-1}[X\setminus U] \mbox{ and } \boxqinv(U)=X\setminus Q[X\setminus U].
\]
Since $Q$ is a quasi-order, so is $\qinv$, so $(\wp(X),\boxq)$ and $(\wp(X),\boxqinv)$ are $\sfour$-algebras. A standard argument (see~\cite[Thm.~3.6]{JT51}) gives that $\mathfrak F^+:=(\wp(X),\boxq, \boxqinv)$ satisfies (\ref{PF}) and (\ref{FP}). Therefore, $\mathfrak F^+$ is an $\st$-algebra.

In fact, each $\st$-algebra $\mathfrak B=(B,\boxf,\boxp)$ is isomorphic to a subalgebra of $\mathfrak F^+$ for some $\st$-frame $\mathfrak F$. As usual, we can take $\mathfrak F$ to be the canonical frame of $\mathfrak B$.
Let $H_F$ and $H_P$ be the sets of $\boxf$-fixpoints and $\boxp$-fixpoints, respectively. Since $\boxf$ and $\boxp$ are $\sfour$-operators, $H_F$ and $H_P$ are Heyting algebras.

\begin{remark}\label{rem:dual iso H_F and H_P}
Let $(B, \boxf, \boxp)$ be an $\st$-algebra. It follows from Definition~\ref{def:ts-algebra} that $H_F$ coincides with the set of $\diap$-fixpoints and $H_P$ with the set of $\diaf$-fixpoints. Moreover, $\neg$ maps $H_F$ to $H_P$ and vice versa. Indeed, if $a \in H_F$, then $a = \boxf a$. By~(\ref{PF}), $\diap a = \diap \boxf a \le a$, so $\diap a = a$, and hence  $\boxp \neg a =\neg \diap a= \neg a$. Therefore, $\neg a \in H_P$. Similarly, if $a \in H_P$, then $\neg a \in H_F$. Thus, $\neg$ is a dual isomorphism between $H_F$ and $H_P$.
\end{remark}

\begin{definition}\label{def:canonical frame for st}
Let $\mathfrak B=(B,\boxf,\boxp)$ be an $\st$-algebra. The \emph{canonical frame} of $\mathfrak B$ is the frame
$\mathfrak B_+=(X_{\mathfrak B},Q_{\mathfrak B})$ where
$X_{\mathfrak B}$ is the set of ultrafilters of $B$ and $x Q_{\mathfrak B} y$ iff $x \cap H_F \subseteq y$; equivalently, $y\cap H_P\subseteq x$.
\end{definition}

By a standard argument, if $\mathfrak B$ is an $\st$-algebra, then $\mathfrak B_+$ is an $\st$-frame.

\begin{definition}
We call an $\st$-frame \textit{canonical} if it is isomorphic to $\mathfrak{B}_+$ for some $\st$-algebra $\mathfrak{B}$.
\end{definition}

A standard argument now yields the following representation theorem.

\begin{proposition} \label{prop:representation st}
If $\mathfrak B$ is an $\st$-algebra, then $\mathfrak B$ is isomorphic to a subalgebra of $(\mathfrak B_+)^+$.
\end{proposition}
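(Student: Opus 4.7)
The plan is to show that the Stone map $\beta:B\to\wp(X_{\mathfrak B})$, defined by $\beta(a)=\{x\in X_{\mathfrak B}\mid a\in x\}$, is an injective $\st$-algebra homomorphism from $\mathfrak B$ into $(\mathfrak B_+)^+$. That $\beta$ is a one-to-one boolean homomorphism is part of the classical Stone representation theorem and comes for free. What requires work is verifying that $\beta$ commutes with both tense operators, i.e.
\[
\beta(\boxf a)=\boxq(\beta(a)) \quad\text{and}\quad \beta(\boxp a)=\boxqinv(\beta(a)),
\]
where $Q=Q_{\mathfrak B}$. Once both equalities are established, $\beta$ is an $\st$-algebra embedding and the proposition follows.

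First I would unwind the definitions: $x\in\boxq(\beta(a))$ means that for every $y\in X_{\mathfrak B}$ with $xQy$ we have $a\in y$. So the task is to prove that $\boxf a\in x$ iff $a\in y$ for every ultrafilter $y$ with $x\cap H_F\subseteq y$. The left-to-right direction is immediate: $\boxf a\in H_F$ because $\boxf\boxf a=\boxf a$, hence $\boxf a\in x\cap H_F\subseteq y$, and since $\boxf a\le a$ the ultrafilter $y$ contains $a$.

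The right-to-left direction is the heart of the argument and the step I expect to be the main obstacle. Assuming $\boxf a\notin x$, I need to build an ultrafilter $y$ with $x\cap H_F\subseteq y$ and $a\notin y$, equivalently $\neg a\in y$. I would consider the filter $F$ generated by $(x\cap H_F)\cup\{\neg a\}$ and show it is proper by a standard meet argument: if it were not, there would exist $b\in x\cap H_F$ with $b\wedge\neg a=0$, so $b\le a$; monotonicity of $\boxf$ together with $b=\boxf b$ would then force $b\le\boxf a$, contradicting $\boxf a\notin x$. Properness of $F$ lets us extend it to an ultrafilter $y$ by Zorn's lemma, producing the desired witness. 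The dual equality $\beta(\boxp a)=\boxqinv(\beta(a))$ is handled symmetrically, using the characterization $y\cap H_P\subseteq x$ of $Q_{\mathfrak B}$ given in Definition~\ref{def:canonical frame for st} and the fact that $H_P$ is the set of $\boxp$-fixpoints.

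Finally, combining the two tense equalities with the classical Stone embedding yields that $\beta:\mathfrak B\to(\mathfrak B_+)^+$ is an injective $\st$-algebra homomorphism, so $\mathfrak B$ is isomorphic to its image, a subalgebra of $(\mathfrak B_+)^+$. No further bookkeeping is needed beyond observing that injectivity of $\beta$ transfers from the underlying boolean algebra, and that preservation of the boolean operations is already part of classical Stone representation.
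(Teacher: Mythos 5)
Your proposal is correct and is exactly the ``standard argument'' the paper invokes without spelling out: the Stone map together with the Jónsson--Tarski ultrafilter-extension argument showing $\beta$ commutes with $\boxf$ and $\boxp$ (the latter via the equivalent characterization $yQ_{\mathfrak B}x$ iff $x\cap H_P\subseteq y$). No discrepancy with the paper's approach.
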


\begin{remark}
To recover the image of $\mathfrak B$ in $\wp(X_\mathfrak{B})$ we need to endow $X_\mathfrak{B}$ with a Stone topology. This leads to the notion of perfect $\st$-frames and a duality between the category of $\st$-algebras and the category of perfect $\st$-frames (see \cite{Esa78}). When $\mathfrak{B}$ is finite, its embedding into $(\mathfrak B_+)^+$ is an isomorphism, and hence the categories of finite $\st$-algebras and finite $\st$-frames are dually equivalent.
\end{remark}

As an immediate consequence, we obtain:

\begin{corollary}
$\st$ is canonical.
\end{corollary}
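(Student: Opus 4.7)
The plan is to observe that this corollary is an immediate consequence of the two ingredients already assembled in the preceding paragraphs: namely, that $\mathfrak B_+$ is an $\st$-frame whenever $\mathfrak B$ is an $\st$-algebra (the ``standard argument'' remark following Definition~\ref{def:canonical frame for st}), and that $\mathfrak F^+ = (\wp(X), \boxq, \boxqinv)$ is an $\st$-algebra whenever $\mathfrak F$ is an $\st$-frame (verified when $\mathfrak F^+$ was first defined). Canonicity of $\st$, by analogy with Corollary~\ref{prop:canonical}, means the statement $\mathfrak B \in \st\text{-alg} \Rightarrow (\mathfrak B_+)^+ \in \st\text{-alg}$. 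So the proof is simply to apply the second fact to $\mathfrak F = \mathfrak B_+$.

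Unpacking this slightly, I would first recall that $\mathfrak B_+ = (X_{\mathfrak B}, Q_{\mathfrak B})$ is a well-defined $\st$-frame: $Q_{\mathfrak B}$ is a quasi-order because each of $(B,\boxf)$ and $(B,\boxp)$ is an $\sfour$-algebra, so by the Jónsson--Tarski theorem the relations derived from $\boxf$ and $\boxp$ are both quasi-orders, and the axioms (\ref{PF}) and (\ref{FP}) in Definition~\ref{def:ts-algebra} guarantee that these two relations are mutually converse, giving a single quasi-order $Q_{\mathfrak B}$ with $Q_{\mathfrak B}^{-1}$ being the relation induced by $\boxp$. Once $\mathfrak B_+$ is recognized as an $\st$-frame, the construction of $\mathfrak F^+$ reviewed in the text gives that $(\mathfrak B_+)^+$ has the form $(\wp(X_{\mathfrak B}), \Box_{Q_{\mathfrak B}}, \Box_{\qinv_{\mathfrak B}})$, which is an $\st$-algebra.

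There is no real obstacle here; the only thing to be careful about is the compatibility of the two derived relations on $X_{\mathfrak B}$, but this is precisely what (\ref{PF}) and (\ref{FP}) ensure via the standard Jónsson--Tarski machinery. Combining these observations with Proposition~\ref{prop:representation st} also shows, in passing, that canonical $\st$-frames are exactly those of the form $\mathfrak B_+$, which is the point of the corollary.
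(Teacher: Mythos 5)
Your proposal is correct and follows exactly the paper's (implicit) argument: canonicity is the statement that $(\mathfrak B_+)^+$ is again an $\st$-algebra, which follows immediately by combining the fact that $\mathfrak B_+$ is an $\st$-frame with the earlier observation that $\mathfrak F^+$ is an $\st$-algebra for every $\st$-frame $\mathfrak F$. The extra detail you supply about $Q_{\mathfrak B}$ and its converse being the relations induced by $\boxf$ and $\boxp$ is a correct unpacking of the paper's ``standard argument.''
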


While $\st$-frames coincide with $\sfour$-frames, the difference is in the interpretation of the modalities as we use $Q$ to interpret $\boxf$ and $\qinv$ to interpret $\boxp$.

A \emph{valuation} on an $\st$-frame $\mathfrak F=(X,Q)$ is a map $v$ associating a subset of $X$ to each propositional letter of
$\mathcal L_T$. The classical connectives are interpreted as usual, and the tense modalities are interpreted as
\begin{equation*}
\begin{array}{l c l}
x \vDash_v \boxf \varphi & \text{ iff } & (\forall y \in X) (x Q y \, \Rightarrow \, y \vDash_v \varphi ), \\
x \vDash_v \boxp \varphi & \text{ iff } & (\forall y \in X) (y Q x \, \Rightarrow \, y \vDash_v \varphi ).
\end{array}
\end{equation*}
As usual, we say that $\varphi$ is \emph{valid} in $\mathfrak F$, in symbols $\mathfrak F \vDash \varphi$, if $x \vDash_v \varphi$ for every
valuation $v$ and $x \in X$.

Soundness of $\st$ with respect to this semantics is straightforward to prove. Completeness follows from the algebraic completeness and the representation of $\st$-algebras.

\begin{theorem}\label{thm:relational semantics for st}
$\st \vdash \varphi$ iff $\mathfrak F \vDash \varphi$ for every $\st$-frame $\mathfrak F$.
\end{theorem}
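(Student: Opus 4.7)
The plan is to prove soundness and completeness separately, with both directions flowing from material already in the excerpt.

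For soundness, I would verify that each axiom of $\st$ is valid in every $\st$-frame and that the rules preserve validity. Since $Q$ is a quasi-order, the $\sfour$-axioms for $\boxf$ are valid under the standard interpretation with $Q$, and since $\qinv$ is also a quasi-order (being the converse of a quasi-order), the $\sfour$-axioms for $\boxp$ are valid under the interpretation with $\qinv$. The tense axioms $p \to \boxp \diaf p$ and $p \to \boxf \diap p$ hold because $Q$ and $\qinv$ are mutual converses: if $x \vDash_v p$ and $y Q x$, then $x \in Q[y]$, so $y \vDash_v \diaf p$, giving $x \vDash_v \boxp \diaf p$; symmetrically for the other axiom. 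Closure under modus ponens and the two necessitation rules is routine. This part is essentially bookkeeping and should not present any obstacle.

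For completeness, the strategy is the standard chain passing through algebraic semantics and the representation theorem. Suppose $\st \nvdash \varphi$. By the Lindenbaum--Tarski construction (invoked just after Definition~\ref{def:ts-algebra}), there exist an $\st$-algebra $\mathfrak B = (B,\boxf,\boxp)$, a valuation $v : \mathrm{Prop} \to B$, and an assignment under which $\varphi$ is not interpreted as the top element of $B$. By Proposition~\ref{prop:representation st}, the Stone map $\beta : \mathfrak B \to (\mathfrak B_+)^+$ is an injective homomorphism of $\st$-algebras. Composing $v$ with $\beta$ yields a valuation $v' = \beta \circ v$ on the $\st$-frame $\mathfrak F := \mathfrak B_+$. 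Because $\beta$ is a homomorphism, a straightforward induction on the complexity of $\varphi$ shows that the interpretation of $\varphi$ under $v'$ in $(\mathfrak B_+)^+$ equals $\beta$ applied to the interpretation of $\varphi$ under $v$ in $\mathfrak B$; in particular, since $\beta$ is injective and preserves and reflects the top element, the interpretation of $\varphi$ in $\mathfrak F^+$ is not all of $X_{\mathfrak B}$. Hence there is some $x \in \mathfrak F$ with $x \nvDash_{v'} \varphi$, and therefore $\mathfrak F \nvDash \varphi$.

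The only conceptually delicate point is making sure the inductive translation is correct at the modal clauses: for $\boxf \psi$ one needs $\beta(\boxf a) = \boxq \beta(a)$, and for $\boxp \psi$ one needs $\beta(\boxp a) = \boxqinv \beta(a)$. These are exactly the statements that the Stone map is a homomorphism with respect to the two $\sfour$-operators, which is precisely the content of Proposition~\ref{prop:representation st} once one unpacks the definitions of $\boxq$ and $\boxqinv$ on $(\mathfrak B_+)^+$. I anticipate no real obstacle here since the authors have laid out all the ingredients (algebraic completeness via Lindenbaum--Tarski, the representation theorem, and the definitions of the frame operations) and the argument is structurally identical to the completeness proof sketched earlier for $\ms$ right after Theorem~\ref{thm:relational semantics for ms4}.
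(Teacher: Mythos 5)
Your proposal is correct and follows exactly the route the paper takes: soundness by direct verification on frames, and completeness by combining the Lindenbaum--Tarski algebraic completeness with the representation of each $\st$-algebra inside $(\mathfrak B_+)^+$ (Proposition~\ref{prop:representation st}), which the paper states in a single sentence. The additional detail you supply about the Stone map commuting with $\boxq$ and $\boxqinv$ is the standard content of that representation theorem, so there is no gap.
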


That $\st$ has the fmp belongs to folklore. We were unable to find it stated explicitly in the literature. It will follow from our results in Section~\ref{sec:FMP}.

\subsection{$\ts$}

The tense logic $\ts$ will combine $\sfour$ with $\st$. We will use $\sfour$ to interpret intuitionistic connectives, and $\st$ to interpret monadic intuitionistic quantifiers. Let $\mathcal{ML}$ be the multimodal propositional language with three modalities $\Box$, $\boxf$, and $\boxp$. We use $\Diamond$, $\diaf$, and $\diap$ as usual abbreviations.

\begin{definition}
The logic $\ts$ is the least classical multimodal logic containing the $\sfour$-axioms for $\Box$, $\boxf$, and $\boxp$, the tense axioms for $\boxf$ and $\boxp$, the connecting axioms
\begin{equation*}
\begin{array}{l}
\Diamond p \to \diaf p \\
\diaf p \to \Diamond( \diaf p \meet \diap p)
\end{array}
\end{equation*}
and closed under modus ponens, substitution, and three necessitation rules (for $\Box$, $\boxf$, and $\boxp$).
\end{definition}

Algebraic semantics for $\ts$ is given by $\ts$-algebras.

\begin{definition} \label{ts4algdef}
A $\ts$-algebra is a quadruple $\mathfrak B=(B, \Box, \boxf, \boxp)$ where $(B, \Box)$ is an $\sfour$-algebra, $(B, \boxf, \boxp)$ is an $\st$-algebra, and for each $a \in B$ we have:
\begin{equation}\label{T1}
\tag{T1}
\Diamond a \le \diaf a
\end{equation}
\begin{equation}\label{T2}
\tag{T2}
\diaf a \le \Diamond( \diaf a \meet \diap a)
\end{equation}
\end{definition}

The Lindenbaum-Tarski construction then yields that $\ts$-algebras provide a sound and complete algebraic semantic for $\ts$.

\begin{definition}
A $\ts$-frame is a triple $\mathfrak F=(X,R,Q)$ where $X$ is a set and $R,Q$ are quasi-orders on $X$ such that $R \subseteq Q$ and $x Q y$ implies that there is $z \in X$ such that $x R z$ and $z E_Q y$.
\end{definition}

\begin{remark}\label{rem:correspondence mipc, ms and ts frames}
\begin{enumerate}
\item[]
\item The only difference between $\ts$-frames and $\mipc$-frames is that in $\ts$-frames the relation $R$ is a quasi-order, while in $\mipc$-frames it is a partial order.
\item It is straightforward to check that if $(X,R,Q)$ is a $\ts$-frame, then $(X,R,E_Q)$ is an $\ms$-frame, and that if $(X,R,E)$ is an $\ms$-frame, then $(X,R,Q_E)$ is a $\ts$-frame. (We recall that, as in Definition~\ref{def:skeleton of ms4-frames}, $Q_E$ is defined by $x Q_E y$ iff $(\exists  z \in X )(xRz \ \& \ zEy)$). 
If $(X,R,Q)$ is a $\ts$-frame, by definition we have that $x Q y $ iff $(\exists z \in X) (x R z \ \& \ z E_Q y)$. Thus, $Q=Q_{E_Q}$. On the other hand, there exist $\ms$-frames $(X,R,E)$ such that $E\ne E_{Q_E}$ (see \cite[p.~24]{Bez99}). Therefore, this correspondence is
not a bijection.
\end{enumerate}
\end{remark}

\begin{lemma} \label{prop: X^+ for ts4}
If $\mathfrak F=(X,R,Q)$ is a $\ts$-frame, then $\mathfrak F^+=(\wp(X), \boxr, \boxq, \boxqinv)$ is a $\ts$-algebra.
\end{lemma}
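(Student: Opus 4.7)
The plan is to verify each of the four ingredients in Definition~\ref{ts4algdef} in turn, most of which are routine and only (T2) requires real work.

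First, since $R$ is a quasi-order on $X$, by a standard Jónsson--Tarski argument (\cite[Thm.~3.5]{JT51}) we get that $(\wp(X),\boxr)$ is an $\sfour$-algebra. Similarly, since $Q$ is a quasi-order, so is $\qinv$, so by the same argument applied to each, together with the observation recorded in the previous subsection that the tense axioms (\ref{PF}) and (\ref{FP}) hold automatically when the two modalities come from a single quasi-order and its converse, the reduct $(\wp(X),\boxq,\boxqinv)$ is an $\st$-algebra.

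Next I would verify (T1) by unwinding the duals: for $U\in\wp(X)$ we have $\Diamond U=R^{-1}[U]$ and $\diaf U=Q^{-1}[U]$, so (T1) reduces to the set-theoretic inclusion $R^{-1}[U]\subseteq Q^{-1}[U]$, which is immediate from $R\subseteq Q$.

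The main (and only genuinely nontrivial) step is (T2). Here I would compute that $\diaf U\meet \diap U = Q^{-1}[U]\cap Q[U]$ and that $\Diamond(\diaf U\meet\diap U)=R^{-1}\!\left[Q^{-1}[U]\cap Q[U]\right]$. Pick $x\in Q^{-1}[U]$, so there is $y\in U$ with $xQy$. The second condition in the definition of a $\ts$-frame yields $z\in X$ with $xRz$ and $zE_Qy$, which gives both $zQy$ (so $z\in Q^{-1}[U]$) and $yQz$ (so $z\in Q[U]$). Hence $xRz$ with $z\in Q^{-1}[U]\cap Q[U]$, which is exactly $x\in \Diamond(\diaf U\meet\diap U)$. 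The hard part is really just recognizing that the defining condition of $\ts$-frames is precisely the first-order translation of (T2); once phrased this way, the computation is forced.

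Putting these four ingredients together gives that $\mathfrak F^+$ satisfies every clause of Definition~\ref{ts4algdef}, hence is a $\ts$-algebra.
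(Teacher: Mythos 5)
Your proposal is correct and follows essentially the same route as the paper: the $\sfour$- and $\st$-algebra reducts are handled by the standard J\'onsson--Tarski facts, (T1) is read off from $R\subseteq Q$, and (T2) is verified pointwise using the second defining condition of $\ts$-frames exactly as in the paper's proof.
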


\begin{proof}
Since $R$ and $Q$ are quasi-orders, $(\wp(X), \boxr)$ is an $\sfour$-algebra and $(\wp(X), \boxq, \boxqinv)$ is an $\st$-algebra. It remains to show that $\mathfrak{F}^+$ satisfies (\ref{T1}) and (\ref{T2}).
\begin{itemize}
\item[(T1)] Since $R \subseteq Q$, we have $\diar(U) = R^{-1}[U] \subseteq Q^{-1}[U] = \diaq(U)$.
\item[(T2)] Let $x \in \diaq(U) = Q^{-1}[U]$, so there is $y \in U$ with $x Q y$. Then there is $z \in X$ with $x R z$ and $z E_Q y$. Therefore, $z \in Q^{-1}[y] \subseteq Q^{-1}[U] = \diaq(U)$ and $z \in Q[y] \subseteq Q[U]= \diaqinv(U)$. Thus,
$x \in R^{-1}[z] \subseteq R^{-1} [\diaq(U) \cap \diaqinv(U)] = \diar (\diaq(U) \cap \diaqinv(U))$. This shows that $\diaq(U) \subseteq \diar (\diaq(U) \cap \diaqinv(U))$.
\end{itemize}
\end{proof}

We next prove that each $\ts$-algebra is represented as a subalgebra of $\mathfrak F^+$ for some $\ts$-frame $\mathfrak F$. For a $\ts$-algebra $(B, \Box, \boxf, \boxp)$ let $H$, $H_F$, and $H_P$ be the Heyting algebras of the $\Box$-fixpoints, $\boxf$-fixpoints, and $\boxp$-fixpoints, respectively.

\begin{definition}\label{def:canonical frame ts4}
Let $\mathfrak B=(B, \Box, \boxf, \boxp)$ be a $\ts$-algebra. The \emph{canonical frame} of $\mathfrak B$ is the frame
$\mathfrak B_+=(X_{\mathfrak B},R_{\mathfrak B},Q_{\mathfrak B})$ where
$X_{\mathfrak B}$ is the set of ultrafilters of $B$, $x R_{\mathfrak B} y$ iff $x\cap H \subseteq y$, and $x Q_{\mathfrak B} y$ iff $x\cap H_F \subseteq y$, which happens iff $y\cap H_P\subseteq x$.
\end{definition}

\begin{lemma}
If $\mathfrak B$ is a $\ts$-algebra, then $\mathfrak B_+$ is a $\ts$-frame.
\end{lemma}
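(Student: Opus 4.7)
The plan is to verify the three defining conditions of a $\ts$-frame for $\mathfrak{B}_+$: that $R_\mathfrak{B}$ and $Q_\mathfrak{B}$ are quasi-orders, that $R_\mathfrak{B}\subseteq Q_\mathfrak{B}$, and that $xQ_\mathfrak{B}y$ always yields a $z$ with $xR_\mathfrak{B}z$ and $zE_{Q_\mathfrak{B}}y$. The first two dispatch quickly: the quasi-order claim follows from the standard J\'onsson--Tarski argument, exactly as in the proof of Lemma~\ref{lem:canonical frame for ms} for $R_\mathfrak{B}$ and analogously for $Q_\mathfrak{B}$, which is the canonical relation for the $\sfour$-operator $\boxf$ of the $\st$-reduct; and for the inclusion, axiom (T1) $\Diamond a\le\diaf a$ dualizes to $\boxf a\le\Box a$, so every $\boxf$-fixpoint is a $\Box$-fixpoint, i.e.\ $H_F\subseteq H$, and then $x\cap H\subseteq y$ trivially gives $x\cap H_F\subseteq y$.

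The substantive work is the third condition, and the plan is a filter-extension argument modelled on the proof of Lemma~\ref{lem:canonical frame for ms}. Given $xQ_\mathfrak{B}y$, I first unpack $zE_{Q_\mathfrak{B}}y$: the condition $yQ_\mathfrak{B}z$ is $y\cap H_F\subseteq z$, while $zQ_\mathfrak{B}y$ is $z\cap H_F\subseteq y$, which, using the $\neg$-duality between $H_F$ and $H_P$ from Remark~\ref{rem:dual iso H_F and H_P}, is equivalent to $y\cap H_P\subseteq z$. Hence it suffices to show the filter $F$ generated by $(x\cap H)\cup(y\cap H_F)\cup(y\cap H_P)$ is proper, and then take $z$ to be any ultrafilter above $F$.

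Assuming toward a contradiction that $F$ is improper, I can pick $a\in x\cap H$, $b\in y\cap H_F$, $c\in y\cap H_P$ with $a\wedge b\wedge c=0$, i.e.\ $b\wedge c\le\neg a$. Since $a\in H$, $\neg a$ is a $\Diamond$-fixpoint, giving $\Diamond(b\wedge c)\le\neg a$. Now applying axiom (T2) to $b\wedge c$, and using from Remark~\ref{rem:dual iso H_F and H_P} that $c=\diaf c$ and $b=\diap b$, we obtain $\diaf(b\wedge c)\wedge\diap(b\wedge c)\le c\wedge b$, whence
\[
\diaf(b\wedge c)\le\Diamond(b\wedge c)\le\neg a,
\]
equivalently $a\le\boxf\neg(b\wedge c)\in H_F$. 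Upward closure in $x$ and the hypothesis $x\cap H_F\subseteq y$ then put $\boxf\neg(b\wedge c)$, and hence $\neg(b\wedge c)$, into $y$, contradicting $b\wedge c\in y$.

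The main obstacle I anticipate is precisely this collapse of the right-hand side of (T2). What makes it go through is the dual description of $H_F$ and $H_P$ in Remark~\ref{rem:dual iso H_F and H_P}: elements of $H_F$ are not only $\boxf$-fixpoints but also $\diap$-fixpoints, and symmetrically for $H_P$. That is exactly what is needed to bound the $\diaf$-closure of $b\wedge c$ by $\Diamond(b\wedge c)$, thereby coupling the tense modalities to the $\Box$-relation $R_\mathfrak{B}$ through (T2). Everything else (dualizing (T1), confirming that $x\cap H$, $y\cap H_F$, $y\cap H_P$ are each closed under meets so that a single product triple suffices, and extending $F$ to an ultrafilter) is routine.
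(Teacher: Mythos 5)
Your proof is correct and follows essentially the same route as the paper's: the same generated filter $(x\cap H)\cup(y\cap H_F)\cup(y\cap H_P)$, the same use of the $\diap$/$\diaf$-fixpoint characterization of $H_F$ and $H_P$ to collapse $\diaf(b\wedge c)\wedge\diap(b\wedge c)$ below $b\wedge c$, and the same appeal to (T2). The only (immaterial) difference is that you derive the final contradiction inside $y$ by pushing $\boxf\neg(b\wedge c)$ along $x\cap H_F\subseteq y$, whereas the paper derives it inside $x$ by pulling $\diaf(b\wedge c)$ back along $y\cap H_P\subseteq x$.
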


\begin{proof}
Clearly $R_\mathfrak{B}$ and $Q_\mathfrak{B}$ are quasi-orders. To prove that $R_\mathfrak{B} \subseteq Q_\mathfrak{B}$ we first show that $H_F \subseteq H$. Let $a \in H_F$. Then $a = \boxf a =\neg \diaf \neg a =\neg \diaf \diaf \neg a$. By (\ref{T1}),
\[
\neg \diaf \diaf \neg a \le \neg \Diamond \diaf \neg a = \Box \boxf a \le \Box a.
\]
Therefore, $a=\Box a$, and so $a\in H$. Now suppose that $x R_\mathfrak{B} y$, so $x \cap H \subseteq y$. Let $a\in x \cap H_F$. Then $a\in x \cap H\subseteq y$. Thus, $a\in y$, and hence $x Q_\mathfrak{B} y$.

To prove the other condition, let $x Q_\mathfrak{B} y$, so $x\cap H_F\subseteq y$. We show that $(x \cap H) \cup (y \cap H_F) \cup (y \cap H_P)$ generates a proper filter of $B$. Otherwise, since $H, H_F, H_P$ are closed under meets, there are $a \in x \cap H$, $b \in y \cap H_F$, and $c \in y \cap H_P$ such that $a \meet b \meet c=0$. By Remark~\ref{rem:dual iso H_F and H_P}, $H_F$ coincides with the set of $\diap$-fixpoints and $H_P$ with the set of $\diaf$-fixpoints. Therefore, since $b \in H_F$ and $c \in H_P$, we have $\diap (b\meet c) \meet \diaf (b\meet c) \le \diap b \meet \diaf c =b \meet c$. Thus, $a \meet \diap (b\meet c) \meet \diaf (b\meet c) \le a \meet b \meet c=0$, yielding $a \le \neg(\diap (b\meet c) \meet \diaf (b\meet c))$. Since $a \in H$, we have
\[
a=\Box a \le \Box \neg(\diap (b\meet c) \meet \diaf (b\meet c))=\neg \Diamond (\diap (b\meet c) \meet \diaf (b\meet c)).
\]
Consequently, $a \meet \Diamond (\diap (b\meet c) \meet \diaf (b\meet c))=0$. By (\ref{T2}),
\[
a \meet \diaf (b\meet c) \le a \meet \Diamond (\diap (b\meet c) \meet \diaf (b\meet c))=0.
\]
Because $b\meet c \le \diaf (b\meet c)$, $b \meet c \in y$, and $y$ is a filter, we have $\diaf (b\meet c) \in y$. Since $x\cap H_F\subseteq y$, we have $y\cap H_P\subseteq x$. Therefore, $\diaf (b\meet c) \in y \cap H_P \subseteq x$ and $a \in x$. Thus, $0=a \meet \diaf (b\meet c) \in x$, a contradiction.
Consequently, there is an ultrafilter $z$ such that $(x \cap H) \cup (y \cap H_F) \cup (y \cap H_P) \subseteq z$. But then $x \cap H \subseteq z$, $y \cap H_F \subseteq z$, and $y \cap H_P \subseteq z$. This gives that $x R_\mathfrak{B} z$, $y Q_\mathfrak{B} z$, and $z Q_\mathfrak{B} y$, as desired.
\end{proof}

\begin{definition}
We call a $\ts$-frame \emph{canonical} if it is isomorphic to $\mathfrak{B}_+$ for some $\ts$-algebra $\mathfrak{B}$.
\end{definition}

Let $\mathfrak B$ be a $\ts$-algebra. Since $\beta:B \to \wp(X_{\mathfrak B})$ is an embedding of $\ts$-algebras, we obtain the following representation theorem for $\ts$-algebras.

\begin{proposition} \label{prop: representation for ts4}
Each $\ts$-algebra $\mathfrak B$ is isomorphic to a subalgebra of $(\mathfrak B_+)^+$.
\end{proposition}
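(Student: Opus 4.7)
The plan is to show that the Stone map $\beta : B \to \wp(X_\mathfrak{B})$ given by $\beta(a) = \{x \in X_\mathfrak{B} \mid a \in x\}$ is an injective homomorphism of $\ts$-algebras; the proposition then follows since $\mathfrak{B}$ is isomorphic to its image, which is a subalgebra of $(\mathfrak{B}_+)^+$. By Stone's theorem, $\beta$ is an injective boolean homomorphism, so the content of the proof lies in verifying that $\beta$ intertwines each of the three modalities with its counterpart on $(\mathfrak{B}_+)^+$, i.e.
\[
\beta(\Box a) = \boxr \beta(a), \qquad \beta(\boxf a) = \boxq \beta(a), \qquad \beta(\boxp a) = \boxqinv \beta(a).
\]

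First I would handle $\beta(\Box a) = \boxr \beta(a)$. One inclusion is immediate: if $\Box a \in x$ and $x R_\mathfrak{B} y$, then $\Box a \in x \cap H \subseteq y$, and $\Box a \le a$ gives $a \in y$. For the reverse inclusion I would argue contrapositively: if $\Box a \notin x$, then the set $(x \cap H) \cup \{\neg a\}$ generates a proper filter, since otherwise there would be $b \in x \cap H$ with $b \le a$, giving $\Box a \ge \Box b = b \in x$, a contradiction. Extending to an ultrafilter $y$ yields $x R_\mathfrak{B} y$ with $a \notin y$, so $x \notin \boxr \beta(a)$. The arguments for $\boxf$ and $\boxp$ are entirely analogous, using $H_F$ and $H_P$ in place of $H$ and the relations $Q_\mathfrak{B}$ and its converse in place of $R_\mathfrak{B}$; indeed, these are precisely the representation arguments for the $\sfour$-algebras $(B,\boxf)$ and $(B,\boxp)$ that underlie Proposition~\ref{prop:representation st}.

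Once preservation of all three modal operators is established, $\beta$ is an injective homomorphism of $\ts$-algebras, and $\mathfrak{B}$ is isomorphic to the subalgebra $\beta(B)$ of $(\mathfrak{B}_+)^+$, as required. There is no genuine obstacle here: since the $\Box$-, $\boxf$-, and $\boxp$-reducts of $\mathfrak{B}$ are each $\sfour$-algebras, and the relations $R_\mathfrak{B}$, $Q_\mathfrak{B}$, and the converse of $Q_\mathfrak{B}$ on $X_\mathfrak{B}$ are defined exactly as in the canonical frames of these reducts, the verification reduces to three independent applications of the standard J\'onsson--Tarski computation already invoked in the corresponding representation theorems for $\ms$-algebras and $\st$-algebras. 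The $\ts$-specific axioms~(\ref{T1}) and~(\ref{T2}) play no role at this stage; they were used only in the preceding lemma to verify that $\mathfrak{B}_+$ is indeed a $\ts$-frame.
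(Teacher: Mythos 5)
Your proof is correct and takes essentially the same route as the paper, which simply asserts that the Stone map $\beta$ is an embedding of $\ts$-algebras and leaves the standard J\'onsson--Tarski verification implicit. Your explicit check of the three box operators (using $H$, $H_F$, $H_P$ and the proper-filter extension argument) is exactly the computation being invoked, and your observation that (\ref{T1}) and (\ref{T2}) enter only through the preceding lemma is accurate.
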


\begin{remark}
To recover the image of $\mathfrak B$ in $\wp(X_\mathfrak{B})$ we need to endow $X_\mathfrak{B}$ with a Stone topology. This leads to the notion of perfect $\ts$-frames and a duality between the categories of $\ts$-algebras and perfect $\ts$-frames (see \cite{CarThesis} for details). When $\mathfrak{B}$ is finite, its embedding into $(\mathfrak B_+)^+$ is an isomorphism, and hence the categories of finite $\ts$-algebras and finite $\ts$-frames are dually equivalent.
\end{remark}

Since $(\mathfrak B_+)^+$ is a $\ts$-algebra, as an immediate consequence we obtain:

\begin{corollary}
$\ts$ is canonical.
\end{corollary}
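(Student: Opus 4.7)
The plan is to observe that the corollary is an immediate composition of the two lemmas just proved, in exactly the same style as the analogous corollaries for $\mipc$, $\ms$, and $\st$ earlier in the paper. Concretely, canonicity here amounts to the statement that for every $\ts$-algebra $\mathfrak B$, the algebra $(\mathfrak B_+)^+$ is again a $\ts$-algebra; this is the algebraic form of canonicity from which the usual syntactic canonicity of the logic is then a routine consequence via the Lindenbaum--Tarski algebra.

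First I would fix an arbitrary $\ts$-algebra $\mathfrak B=(B,\Box,\boxf,\boxp)$ and pass to its canonical frame $\mathfrak B_+=(X_{\mathfrak B},R_{\mathfrak B},Q_{\mathfrak B})$. The previous lemma established that $\mathfrak B_+$ is a $\ts$-frame, i.e.\ $R_{\mathfrak B}$ and $Q_{\mathfrak B}$ are quasi-orders with $R_{\mathfrak B}\subseteq Q_{\mathfrak B}$ and satisfying the existence condition $x Q_{\mathfrak B} y \Rightarrow (\exists z)(x R_{\mathfrak B} z \ \& \ z E_{Q_{\mathfrak B}} y)$. Then I would apply Lemma~\ref{prop: X^+ for ts4} to the frame $\mathfrak B_+$, which yields that $(\mathfrak B_+)^+=(\wp(X_{\mathfrak B}),\boxr,\boxq,\boxqinv)$ is a $\ts$-algebra. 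Composing these two facts gives that $\mathfrak B \mapsto (\mathfrak B_+)^+$ sends $\ts$-algebras to $\ts$-algebras, which is the desired canonicity property.

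For the syntactic reading of the corollary, I would then note that by Lindenbaum--Tarski the free $\ts$-algebra $\mathfrak L$ on countably many generators is a $\ts$-algebra whose equational theory is exactly $\ts$, so by Proposition~\ref{prop: representation for ts4} it embeds into $(\mathfrak L_+)^+$; since $(\mathfrak L_+)^+$ is a $\ts$-algebra by the argument above, any non-theorem of $\ts$ is refuted in the $\ts$-frame $\mathfrak L_+$. This also secures relational completeness of $\ts$ as a byproduct, completely parallel to Theorems~\ref{thm:relational semantics for mipc},~\ref{thm:relational semantics for ms4}, and~\ref{thm:relational semantics for st}.

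There is essentially no obstacle here: all the real work was absorbed into verifying (T1) and (T2) on powerset algebras (done in Lemma~\ref{prop: X^+ for ts4}) and into the delicate filter extension argument that produced the required witness $z$ for the $Q_{\mathfrak B}$-condition on $\mathfrak B_+$ (which used (T2) together with the dual isomorphism $\neg\colon H_F\to H_P$ from Remark~\ref{rem:dual iso H_F and H_P}). Once those are in place, the corollary is purely formal.
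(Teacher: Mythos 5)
Your proof is correct and follows exactly the paper's route: canonicity here means $\mathfrak B\in\ts\text{-alg} \Rightarrow (\mathfrak B_+)^+\in\ts\text{-alg}$, and the paper likewise obtains this immediately by composing the lemma that $\mathfrak B_+$ is a $\ts$-frame with Lemma~\ref{prop: X^+ for ts4} that complex algebras of $\ts$-frames are $\ts$-algebras. Your additional remarks on the syntactic reading via the Lindenbaum--Tarski algebra are consistent with how the paper uses canonicity elsewhere.
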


Let $\mathfrak F=(X,R,Q)$ be a $\ts$-frame. A \emph{valuation} of $\mathcal{ML}$ into $\mathfrak F$ associates with each propositional letter
a subset of $X$.
The classical connectives are interpreted as usual, $\Box$ is interpreted using the relation $R$, and $\boxf$, $\boxp$ are interpreted using
the relation $Q$:
\begin{equation*}
\begin{array}{l c l}
x \vDash_v \Box \varphi & \text{ iff } & (\forall y \in X) (x R y \Rightarrow y \vDash_v \varphi ), \\
x \vDash_v \boxf \varphi & \text{ iff } & (\forall y \in X) (x Q y \Rightarrow y \vDash_v \varphi ), \\
x \vDash_v \boxp \varphi & \text{ iff } & (\forall y \in X) (y Q x \Rightarrow y \vDash_v \varphi ).
\end{array}
\end{equation*}
Consequently,
\begin{equation*}
\begin{array}{l c l}
x \vDash_v \Diamond \varphi & \text{ iff } & (\exists y \in X) (x R y \; \& \; y \vDash_v \varphi ),\\
x \vDash_v \diaf \varphi & \text{ iff } & (\exists y \in X) (x Q y \; \& \; y \vDash_v \varphi ),\\
x \vDash_v \diap \varphi & \text{ iff } & (\exists y \in X) (y Q x \; \& \; y \vDash_v \varphi ).
\end{array}
\end{equation*}

\begin{theorem}\label{thm:relational semantics for ts4}
$\ts \vdash \varphi$ iff $\mathfrak F \vDash \varphi$ for every $\ts$-frame $\mathfrak F$.
\end{theorem}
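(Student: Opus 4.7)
The plan is to follow the same pattern used above for $\mipc$, $\ms$, and $\st$: reduce relational soundness/completeness to algebraic soundness/completeness via the functors $\mathfrak{F}\mapsto\mathfrak{F}^+$ and $\mathfrak{B}\mapsto\mathfrak{B}_+$, which have already been set up in Lemma~\ref{prop: X^+ for ts4} and Proposition~\ref{prop: representation for ts4}. The Lindenbaum–Tarski construction gives algebraic soundness and completeness of $\ts$ with respect to the class of $\ts$-algebras, so the remaining work is to transfer validity back and forth between $\ts$-algebras and $\ts$-frames.

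For the forward (soundness) direction, I would argue that if $\ts\vdash\varphi$, then $\varphi$ is valid in every $\ts$-algebra. Given a $\ts$-frame $\mathfrak{F}=(X,R,Q)$ and a valuation $v$, extend $v$ to a homomorphism from the formula algebra to $\mathfrak{F}^+=(\wp(X),\boxr,\boxq,\boxqinv)$ in the standard way, noting that the interpretations of $\Box$, $\boxf$, and $\boxp$ in the frame semantics are exactly the operators $\boxr$, $\boxq$, and $\boxqinv$. Since $\mathfrak{F}^+$ is a $\ts$-algebra by Lemma~\ref{prop: X^+ for ts4}, $\varphi$ evaluates to the top element $X$, which means $x\vDash_v\varphi$ for every $x\in X$. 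Thus $\mathfrak{F}\vDash\varphi$.

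For the reverse (completeness) direction, suppose $\ts\nvdash\varphi$. By algebraic completeness there is a $\ts$-algebra $\mathfrak{B}$ and a valuation into $\mathfrak{B}$ refuting $\varphi$. By Proposition~\ref{prop: representation for ts4}, the Stone map $\beta$ embeds $\mathfrak{B}$ into $(\mathfrak{B}_+)^+$ as a $\ts$-algebra, so $\varphi$ is also refuted in $(\mathfrak{B}_+)^+$ under the valuation $\beta\circ v$. Since $\mathfrak{B}_+$ is a $\ts$-frame by the preceding lemma, and since validity of $\varphi$ in $\mathfrak{B}_+$ is equivalent to $\varphi$ evaluating to the top of $(\mathfrak{B}_+)^+$ under every valuation, we get $\mathfrak{B}_+\nvDash\varphi$, as required.

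There is no real obstacle here beyond what has already been done: the essential content lives in Lemma~\ref{prop: X^+ for ts4} (which verifies that the frame conditions $R\subseteq Q$ and the zigzag property force the axioms (\ref{T1}) and (\ref{T2}) to hold on $\mathfrak{F}^+$) and in the representation theorem (which in turn relies on verifying that $\mathfrak{B}_+$ satisfies those same frame conditions, using in particular Remark~\ref{rem:dual iso H_F and H_P} and the axiom (\ref{T2})). Once these are in place, the theorem is a purely formal corollary of algebraic completeness and the representation, exactly as in Theorems~\ref{thm:relational semantics for mipc}, \ref{thm:relational semantics for ms4}, and \ref{thm:relational semantics for st}.
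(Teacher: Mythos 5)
Your proposal is correct and follows exactly the paper's own route: soundness via the observation that $\mathfrak{F}^+$ is a $\ts$-algebra (Lemma~\ref{prop: X^+ for ts4}), and completeness by combining algebraic completeness with the representation theorem (Proposition~\ref{prop: representation for ts4}). The paper states this in one line; your write-up just supplies the standard details.
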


\begin{proof}
Soundness is straightforward to prove, and completeness follows from the algebraic completeness and the representation of $\ts$-algebras (Proposition~\ref{prop: representation for ts4}).
\end{proof}

In Section~\ref{sec:FMP} we will prove that $\ts$ has the fmp.

\subsection{G\"{o}del translation adjusted}

In this section we modify the G\"{o}del translation to embed $\mipc$ into $\ts$ fully and faithfully.

\begin{definition}
The translation $(-)^\natural: \mipc \to \ts$ is defined as $(-)^t$ on propositional letters, $\bot$, $\wedge$, $\vee$, and $\to$; and for $\forall$ and $\exists$ we set:
\begin{equation*}
\begin{array}{c}
(\forall \varphi)^\natural= \boxf \varphi^\natural\\
(\exists \varphi)^\natural= \diap \varphi^\natural
\end{array}
\end{equation*}
Thus, $\forall$ is interpreted as ``always in the future'' and $\exists$ as ``sometime in the past.''
\end{definition}

We adapt Definition~\ref{def:skeleton of ms4-frames} to the setting of $\ts$-frames by utilizing the correspondence between $\ts$-frames and $\ms$-frames described in Remark~\ref{rem:correspondence mipc, ms and ts frames}.

\begin{definition}\label{def:skeleton of ts4-frames}
Let $\mathfrak F=(X,R,Q)$ be a $\ts$-frame, and let $\sim$ be the equivalence relation given by $x \sim y$ iff $xRy$ and $yRx$. We set $X'$ to be the set of equivalence classes of $\sim$, and define $R'$ and $Q'$ on $X'$ by $[x] R' [y]$ iff $xRy$ and $[x] Q' [y]$ iff $xQy$. We call  $\mathfrak{F}^\natural=(X',R',Q')$ the \textit{skeleton} of $\mathfrak{F}$.
\end{definition}

\begin{proposition} \label{prop:skeleton of ts4-frames}
\begin{enumerate}
\item[]
\item If $\mathfrak F$ is a $\ts$-frame, then $\mathfrak{F}^\natural$ is an $\mipc$-frame.
\item For each valuation $v$ on $\mathfrak{F}$ there is a valuation $v'$ on $\mathfrak{F}^\natural$ such that for each $x \in \mathfrak{F}$ and $\mathcal{L}_{\forall \exists}$-formula $\varphi$, we have
\[
\mathfrak{F^\natural}, [x] \vDash_{v'} \varphi \mbox{ iff } \mathfrak{F},x \vDash_{v} \varphi^\natural.
\]
\item For each $\mathcal{L}_{\forall \exists}$-formula $\varphi$, we have
\[
\mathfrak{F}^\natural \vDash \varphi \mbox{ iff } \mathfrak{F} \vDash \varphi^\natural.
\]
\item Any $\mipc$-frame $\mathfrak{G}$ is also a $\ts$-frame and $\mathfrak{G}^\natural$ is isomorphic to $\mathfrak{G}$.
\end{enumerate}
\end{proposition}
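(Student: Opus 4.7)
The plan is to adapt the skeleton construction from Proposition~\ref{prop:skeleton of ms4-frames}, replacing the equivalence relation $E$ by the quasi-order $Q$ and treating the existential quantifier via the tense modality $\diap$ rather than via $E$. The four parts will be proved in order, with part (2) forming the technical heart of the argument.

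For (1), the key points are well-definedness of $Q'$ on equivalence classes of $\sim$ (which follows from $R \subseteq Q$ together with transitivity of $Q$), inheritance of reflexivity and transitivity of $Q'$ from $Q$, and the fact that the $\ts$-frame condition delivers both (O1) and (O2) on the quotient: if $[x]Q'[y]$, choose $z$ with $xRz$ and $zE_Q y$, so that $[x]R'[z]$ and $[z]E_{Q'}[y]$. Part (4) is a comparison of the frame axioms: an $\mipc$-frame already satisfies $R \subseteq Q$ and (O2), so it is automatically a $\ts$-frame; and since its $R$ is a partial order, $\sim$ is the identity, making $\mathfrak{G}^\natural$ canonically isomorphic to $\mathfrak{G}$.

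For (2), I would first prove the following analogue of Lemma~\ref{lem:translation in ms4 is upset}: for every $\mathcal L_{\forall\exists}$-formula $\chi$, $\ts \vdash \chi^\natural \to \Box \chi^\natural$. By algebraic completeness this reduces to checking that $\chi^\natural$ takes $\Box$-fixpoint values. The two new inductive cases use the following algebraic facts. First, axiom (T1) yields $H_F \subseteq H$: if $a = \boxf a$ then $\neg a = \diaf \neg a$, and (T1) gives $\Diamond \neg a \le \diaf \neg a = \neg a$, whence $a = \Box a$; so $\boxf\varphi^\natural$ is always a $\Box$-fixpoint. Second, by Remark~\ref{rem:dual iso H_F and H_P}, $H_F$ coincides with the set of $\diap$-fixpoints, so $\diap\varphi^\natural$ lies in $H_F \subseteq H$. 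With this lemma in hand, define $v'(p) = \{[x] : R[x] \subseteq v(p)\}$ and induct on $\varphi$. The classical and base cases are standard; for $\forall\psi$ the semantics match directly because $[x]Q'[y] \iff xQy$. The $\exists\psi$ case is the crux: setting $U = \{y : \mathfrak{F}, y \vDash_v \psi^\natural\}$, the upset lemma and soundness guarantee that $U$ is an $R$-upset; it then remains to show
\[
(\exists y \in U)(xQy \text{ and } yQx) \iff (\exists y \in U)(yQx).
\]
The forward direction is trivial; for the converse, apply (O2) to $yQx$ to obtain $z$ with $yRz$ and $zE_Q x$, and note that $z \in U$ because $U$ is an $R$-upset, witnessing the left-hand side.

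Part (3) is then a routine transfer using (2): for $\mathfrak{F} \nvDash \varphi^\natural$ apply (2) to the refuting valuation; conversely, given a refuting valuation $w$ on $\mathfrak{F}^\natural$, lift it to $v(p) = \{y : [y] \in w(p)\}$ and observe that $v' = w$ because each $w(p)$ is an $R'$-upset. \textbf{The main obstacle} is the $\exists\psi$ case in part (2), where the asymmetric tense reading of $\diap$ (which depends only on $Q^{-1}$) must be reconciled with the symmetric $E_{Q'}$-semantics of $\exists$ in $\mipc$; this reconciliation is precisely what the combination of the upset lemma and the $\ts$-frame condition (O2) accomplishes.
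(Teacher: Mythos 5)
Your proof is correct, and its architecture --- the valuation $v'(p)=\{[x]\mid R[x]\subseteq v(p)\}$, the induction with $\forall\psi$ and $\exists\psi$ as the only nontrivial cases, and the treatment of parts (1), (3), and (4) --- coincides with the paper's. The one genuine divergence is in the step you rightly flag as the crux, the $\exists\psi$ case. The paper settles it entirely on the skeleton: since $\mathfrak{F}^\natural$ is already known to be an $\mipc$-frame, the truth set $\{[y]\mid\mathfrak{F}^\natural,[y]\vDash_{v'}\psi\}$ is an $R'$-upset for free by intuitionistic semantics, and Remark~\ref{rem:E_Q and Q on R-upsets} (itself a consequence of (O2)) lets one replace $E_{Q'}$ by $Q'$ on that set before unfolding $Q'$ to $Q$. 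You instead prove a new syntactic lemma, the $(-)^\natural$-analogue of Lemma~\ref{lem:translation in ms4 is upset}, namely $\ts\vdash\chi^\natural\to\Box\chi^\natural$, deduce that $U=\{y\mid\mathfrak{F},y\vDash_v\psi^\natural\}$ is an $R$-upset of $\mathfrak{F}$, and then apply the $\ts$-frame condition directly in $\mathfrak{F}$ to pass from a $\qinv$-witness to an $E_Q$-witness inside $U$. Your proof of that auxiliary lemma is sound: (T1) gives $H_F\subseteq H$ exactly as in the paper's verification that canonical frames of $\ts$-algebras are $\ts$-frames, and Remark~\ref{rem:dual iso H_F and H_P} places $\diap\varphi^\natural$, being a $\diap$-fixpoint, in $H_F\subseteq H$. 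The two routes are equivalent in substance --- both ultimately invoke (O2) to reconcile $\qinv$ with $E_Q$ on upsets --- but the paper's is slightly leaner, since for $(-)^\natural$ (unlike for $(-)^t$ in Proposition~\ref{prop:skeleton of ms4-frames}) no upset lemma at the level of $\mathfrak{F}$ is actually needed; your extra lemma is true and does the job, at the cost of one more induction.
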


\begin{proof}
(1). It is well known that $(X',R')$ is an intuitionistic Kripke frame. $Q'$ is well defined on $X'$ because $R \subseteq Q$ in $\mathfrak{F}$. Showing that $Q'$ is a quasi-order, and that (O1) and (O2) hold in $\mathfrak{F}^\natural$ is straightforward.

(2).
As in Proposition~\ref{prop:skeleton of ms4-frames}(2), we define $v'$ by $v'(p)=\{ [x] \in X' \mid R[x] \subseteq v(p)\}$,
and show that $\mathfrak{F^\natural}, [x] \vDash_{v'} \varphi$ iff $\mathfrak{F},x \vDash_{v} \varphi^\natural$ by induction on the complexity of $\varphi$.
It is sufficient to only consider the cases when $\varphi$ is of the form $\forall \psi$ or $\exists \psi$.
Suppose $\varphi=\forall \psi$. Then by the definition of $Q'$ and induction hypothesis,
\begin{align*}
\mathfrak{F}^\natural, [x] \vDash_{v'} \forall \psi &\mbox{ iff } (\forall [y] \in X')([x]Q'[y] \, \Rightarrow \, \mathfrak{F}^\natural, [y] \vDash_{v'} \psi)\\
 &\mbox{ iff } (\forall y \in X)(xQy \, \Rightarrow \, \mathfrak{F}^\natural, [y] \vDash_{v'} \psi) \\
 &\mbox{ iff } (\forall y \in X)(xQy \, \Rightarrow \, \mathfrak{F}, y \vDash_v \psi^\natural) \\
  &\mbox{ iff } \mathfrak{F}, x \vDash_v \boxf \psi^\natural \\
   &\mbox{ iff }  \mathfrak{F}, x \vDash_v (\forall \psi)^\natural.
\end{align*}
Suppose $\varphi=\exists \psi$. As noted in Remark~\ref{rem:E_Q and Q on R-upsets}, $Q'$ and $E_{Q'}$ coincide on $R'$-upsets. Since the set $\{ [y] \mid \mathfrak{F}^\natural, [y] \vDash_{v'} \psi \}$ is an $R'$-upset,
by the induction hypothesis, we have
\begin{align*}
\mathfrak{F}^\natural, [x] \vDash_{v'} \exists \psi &\mbox{ iff } (\exists [y] \in X')([x]E_{Q'}[y] \; \& \; \mathfrak{F}^\natural, [y] \vDash_{v'} \psi)\\
 &\mbox{ iff } [x] \in E_{Q'}[\{ [y] \mid \mathfrak{F}^\natural, [y] \vDash_{v'} \psi \}]\\
 &\mbox{ iff } [x] \in Q'[\{ [y] \mid \mathfrak{F}^\natural, [y] \vDash_{v'} \psi \}]\\
 &\mbox{ iff } x \in Q[\{ y \mid \mathfrak{F}^\natural, [y] \vDash_{v'} \psi \}]\\
 &\mbox{ iff } x \in Q[\{ y \mid \mathfrak{F}, y \vDash_v \psi^\natural \}] \\ 
 &\mbox{ iff } (\exists y \in X)(yQx \; \& \; \mathfrak{F}, y \vDash_v \psi^\natural) \\
 &\mbox{ iff } \mathfrak{F}, x \vDash_v \diap \psi^\natural \\
 &\mbox{ iff } \mathfrak{F}, x \vDash_v (\exists \psi)^\natural.
\end{align*}

(3). The proof is analogous to that of Proposition~\ref{prop:skeleton of ms4-frames}(3).

(4). Let $\mathfrak{G}=(X,R,Q)$ be an $\mipc$-frame. It is clear from the definition of $\ts$-frames that $\mathfrak{G}$ is also a $\ts$-frame. Since $R$ is a partial order, $\sim$ is the identity relation. Therefore, $\mathfrak{G}$ is isomorphic to $\mathfrak{G}^\natural$.
\end{proof}

\begin{theorem}\label{thm:()^natural full and faithful}
The translation $(-)^\natural$ of $\mipc$ into $\ts$ is full and faithful; that is,
\[
\mipc \vdash\varphi \mbox{ iff } \ts \vdash\varphi^\natural.
\]
\end{theorem}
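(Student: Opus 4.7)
The plan is to mirror the argument given for the Gödel translation in Theorem~\ref{thm:godel translation}, replacing the skeleton construction for $\ms$-frames with the one for $\ts$-frames established in Proposition~\ref{prop:skeleton of ts4-frames}. All the substantive work has already been done; the theorem should follow by combining relational completeness on both sides with the skeleton translation.

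For faithfulness, I would assume $\ts \nvdash \varphi^\natural$ and invoke the relational completeness of $\ts$ (Theorem~\ref{thm:relational semantics for ts4}) to obtain a $\ts$-frame $\mathfrak{F}$ with $\mathfrak{F} \nvDash \varphi^\natural$. Proposition~\ref{prop:skeleton of ts4-frames}(1) guarantees that the skeleton $\mathfrak{F}^\natural$ is an $\mipc$-frame, and Proposition~\ref{prop:skeleton of ts4-frames}(3) transfers the refutation, giving $\mathfrak{F}^\natural \nvDash \varphi$. Relational completeness of $\mipc$ (Theorem~\ref{thm:relational semantics for mipc}) then yields $\mipc \nvdash \varphi$.

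For fullness, I would reverse the flow. Starting from $\mipc \nvdash \varphi$, Theorem~\ref{thm:relational semantics for mipc} produces an $\mipc$-frame $\mathfrak{G}$ with $\mathfrak{G} \nvDash \varphi$. By Proposition~\ref{prop:skeleton of ts4-frames}(4), $\mathfrak{G}$ is itself a $\ts$-frame and is isomorphic to its own skeleton $\mathfrak{G}^\natural$, so $\mathfrak{G}^\natural \nvDash \varphi$. Applying Proposition~\ref{prop:skeleton of ts4-frames}(3) once more gives $\mathfrak{G} \nvDash \varphi^\natural$, and thus $\ts \nvdash \varphi^\natural$.

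I do not expect any real obstacle: the delicate part of the argument, namely the inductive verification that truth of $\varphi$ at $[x]$ in $\mathfrak{F}^\natural$ matches truth of $\varphi^\natural$ at $x$ in $\mathfrak{F}$, was already handled in Proposition~\ref{prop:skeleton of ts4-frames}(2), where the clauses $(\forall\psi)^\natural = \boxf\psi^\natural$ and $(\exists\psi)^\natural = \diap\psi^\natural$ are reconciled with the behavior of $Q'$ and $E_{Q'}$ on $R'$-upsets. The present theorem is then a purely formal packaging of these facts, parallel to the proof of Theorem~\ref{thm:godel translation}.
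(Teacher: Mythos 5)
Your argument is correct and follows exactly the same route as the paper's own proof: faithfulness via relational completeness of $\ts$, the skeleton construction of Proposition~\ref{prop:skeleton of ts4-frames}(1) and (3), and relational completeness of $\mipc$; fullness via Proposition~\ref{prop:skeleton of ts4-frames}(4) and (3). There is nothing to add or correct.
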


\begin{proof}
To prove faithfulness, suppose that $\ts \nvdash\varphi^\natural$. By Theorem~\ref{thm:relational semantics for ts4}, there is a $\ts$-frame $\mathfrak{F}$ such that $\mathfrak{F} \nvDash \varphi^\natural$. By Proposition~\ref{prop:skeleton of ts4-frames}, $\mathfrak{F}^\natural$ is an $\mipc$-frame and $\mathfrak{F}^\natural \nvDash \varphi$. Thus, by Theorem~\ref{thm:relational semantics for mipc}, $\mipc \nvdash \varphi$. For fullness, if $\mipc \nvdash \varphi$, then there is an $\mipc$-frame $\mathfrak{G}$ such that $\mathfrak{G} \nvDash \varphi$. By Proposition~ \ref{prop:skeleton of ts4-frames}(4), $\mathfrak{G}$ is also a $\ts$-frame and it is isomorphic to $\mathfrak{G}^\natural$. Therefore, $\mathfrak{G}^\natural \nvDash \varphi$. Proposition~\ref{prop:skeleton of ts4-frames}(3) then yields that $\mathfrak{G} \nvDash \varphi^\natural$. Thus, $\ts \nvdash \varphi^\natural$.
\end{proof}

\section{Translations into $\mst$} \label{sec:translations into mst}

In Sections~\ref{sec:godel translation} and~\ref{sec:TS4} we described full and faithful translations of $\mipc$ into $\ms$ and $\ts$, respectively. This yields the following diagram.

\[
\begin{tikzcd}
& \ms  & \\
\mipc \arrow[ru, " ( \; )^t"] \arrow[rd, "(\; )^\natural"'] & & \\
& \ts  &
\end{tikzcd}
\]

There does not appear to be a natural way to translate $\ms$ into $\ts$ or vice versa (see \cite{CarThesis} for details). The aim of this section is to define a new tense system and show that both $\ms$ and $\ts$ embed fully and faithfully into it, thus completing the above diagram.

\subsection{$\mst$}

Let $\mathcal L_{T\forall}$ be the propositional language with the tense modalities $\boxfr$ and $\boxpr$, and the monadic modality $\forall$. In order to stress that the language $\mathcal L_{T\forall}$ is different from $\mathcal{ML}$ and $\ts$, we use different symbols for the tense modalities.

\begin{definition}
The \emph{tense $\ms$}, denoted $\mst$, is the least classical multimodal logic containing the $\st$-axioms for $\boxfr$ and $\boxpr$, the
$\sfive$-axioms for $\forall$, the left commutativity axiom
\[
\boxfr \forall p \to \forall \boxfr p,
\]
and closed under modus ponens, substitution, and the necessitation rules (for $\boxfr$, $\boxpr$, and $\forall$).
\end{definition}

\begin{remark}\label{rem:monadic}
We can think of $\mst$ as the tense extension of $\ms$. It is worth stressing that $\mst$ is not the monadic fragment of the standard predicate extension $\qsfourt$ of $\st$. To see this, it is well known that the Barcan formula $\forall x \boxfr \varphi \to \boxfr \forall x \varphi$ and the converse Barcan formula $\boxfr \forall x \varphi \to \forall x \boxfr \varphi$ are both theorems of any tense predicate logic, hence of $\qsfourt$ as well.
Thus, the monadic fragment of $\qsfourt$ contains both the left commutativity axiom $\boxfr \forall p \to \forall \boxfr p$ and the right commutativity axiom $\forall \boxfr p \to \boxfr \forall p$. On the other hand, it is easy to see (e.g., using the Kripke semantics for $\mst$ which we will define shortly) that, while $\mst$ contains the left commutativity axiom, the right commutativity axiom is not provable in $\mst$.
\end{remark}

Algebraic semantics for $\mst$ is given by $\mst$-algebras.

\begin{definition}
An \emph{$\mst$-algebra} is a tuple $\mathfrak B=(B, \boxfr, \boxpr, \forall)$ where $(B, \boxfr, \boxpr)$ is an $\st$-algebra and $(B, \boxfr, \forall)$ is an $\ms$-algebra.
\end{definition}

As usual, the Lindenbaum-Tarski construction yields that $\mst$ is sound and complete with respect to $\mst$-algebras.

As with $\sf S4$ and $\sf S4.t$, we have that $\mst$-frames are simply $\ms$-frames, the difference is in interpreting tense modalities.
Thus, the following lemma is straightforward.

\begin{lemma} \label{prop: X^+ for ms4.t}
If $\mathfrak F=(X,R,E)$ is an $\mst$-frame, then $\mathfrak F^+:=(\wp(X), \Box_R, \boxrinv, \forall_E)$ is an $\mst$-algebra.
\end{lemma}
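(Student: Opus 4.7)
The plan is to simply unpack the definition of an $\mst$-algebra and invoke the two corresponding results already established earlier in the paper, since an $\mst$-frame is, by fiat, just an $\ms$-frame with a different interpretation of tense modalities.

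First I would note that, by definition, an $\mst$-algebra $(B, \boxfr, \boxpr, \forall)$ is the amalgamation of two structures on the same carrier: the $\st$-algebra reduct $(B, \boxfr, \boxpr)$ and the $\ms$-algebra reduct $(B, \boxfr, \forall)$. Crucially, the $\sfour$-algebra component $(B, \boxfr)$ is shared between them, so it suffices to verify each reduct separately on $\mathfrak{F}^+$.

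For the $\st$-algebra reduct, I would observe that since $R$ is a quasi-order on $X$, the pair $(X,R)$ is in particular an $\st$-frame (in the sense of the previous section), and the discussion following the definition of $\st$-frames already established that $(\wp(X), \Box_R, \boxrinv)$ is an $\st$-algebra: both $(\wp(X),\Box_R)$ and $(\wp(X),\boxrinv)$ are $\sfour$-algebras, and the tense axioms (\ref{PF}) and (\ref{FP}) are validated by the standard argument for $Q$ and $\qinv$ on powersets.

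For the $\ms$-algebra reduct, I would invoke the analogous fact from the $\ms$ section: since $(X,R,E)$ is an $\ms$-frame (quasi-order $R$, equivalence $E$, and the commutativity condition (E)), we already know that $(\wp(X), \Box_R, \forall_E)$ is an $\ms$-algebra. Putting the two reducts together on the common carrier $\wp(X)$ with the shared operator $\Box_R$ gives that $\mathfrak{F}^+=(\wp(X), \Box_R, \boxrinv, \forall_E)$ satisfies all the axioms in the definition of an $\mst$-algebra. There is no genuine obstacle here; the only thing to emphasize is that the new multimodal signature $\mathcal{L}_{T\forall}$ introduces no axioms beyond those of its $\st$ and $\ms$ components, so no cross-interaction between $\boxpr$ and $\forall$ need be verified.
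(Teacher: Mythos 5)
Your proof is correct and matches the paper's intended argument: the paper states this lemma without proof, noting only that $\mst$-frames are just $\ms$-frames with tense modalities interpreted via $R$ and its converse, so the claim reduces to the already-established facts that $(\wp(X),\Box_R,\boxrinv)$ is an $\st$-algebra and $(\wp(X),\Box_R,\forall_E)$ is an $\ms$-algebra. Your observation that the definition of $\mst$-algebra imposes no interaction axioms between $\boxpr$ and $\forall$ is exactly the point that makes the verification immediate.
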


We next prove that each $\mst$-algebra is represented as a subalgebra of $\mathfrak F^+$ for some $\mst$-frame $\mathfrak F$. For an $\mst$-algebra $(B, \boxfr, \boxpr, \forall)$ let $H_F$, $H_P$, and $B_0$ be the $\boxfr$-fixpoints, $\boxpr$-fixpoints, and $\forall$-fixpoints, respectively. Clearly $H_F$ and $H_P$ are Heyting algebras and $B_0$ is a boolean subalgebra of $B$.

\begin{definition} \label{def:canonical frame mst}
Let $\mathfrak B=(B, \boxfr, \boxpr, \forall)$ be an $\mst$-algebra. The \emph{canonical frame} of $\mathfrak B$ is the frame
$\mathfrak B_+=(X_{\mathfrak B},R_{\mathfrak B},E_{\mathfrak B})$ where
$X_{\mathfrak B}$ is the set of ultrafilters of $B$, $x R_{\mathfrak B} y$ iff
$x\cap H_F \subseteq y$ iff $y\cap H_P \subseteq x$, and $x E_{\mathfrak B} y$ iff $x\cap B_0 = y\cap B_0$.
\end{definition}

Since $\mst$-frames are $\ms$-frames, the next lemma is obvious.

\begin{lemma}\label{lem:canonical frame for mst}
If $\mathfrak B$ is an $\mst$-algebra, then $\mathfrak B_+$ is an $\mst$-frame.
\end{lemma}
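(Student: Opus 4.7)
The plan is to reduce this to the already-proved Lemma~\ref{lem:canonical frame for ms} by observing that everything in sight is set up so that the canonical frame of the $\mst$-algebra is literally the canonical frame of its $\ms$-reduct, and that $\mst$-frames are the same structures as $\ms$-frames.

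First I would note that, by Definition of an $\mst$-algebra, the triple $(B,\boxfr,\forall)$ obtained from $\mathfrak B=(B,\boxfr,\boxpr,\forall)$ by forgetting $\boxpr$ is an $\ms$-algebra, with $\boxfr$ playing the role of the $\sfour$-operator $\Box$ of Definition~\ref{def:ms-alg}. Under this identification, the set $H_F$ of $\boxfr$-fixpoints of $\mathfrak B$ is exactly the Heyting algebra $H$ used in the canonical frame of the $\ms$-reduct (Definition~\ref{def:esakia}), and $B_0$ is the same set of $\forall$-fixpoints in both algebras.

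Next I would compare Definition~\ref{def:canonical frame mst} with Definition~\ref{def:esakia}: the underlying set $X_{\mathfrak B}$ of ultrafilters is the same, the equivalence relation $E_{\mathfrak B}$ is defined identically by $x\cap B_0=y\cap B_0$, and the relation $R_{\mathfrak B}$ defined by $x\cap H_F\subseteq y$ coincides with the relation $x\cap H\subseteq y$ in the $\ms$-reduct (the alternative characterization $y\cap H_P\subseteq x$ in Definition~\ref{def:canonical frame mst} plays no role here; it just reflects the additional $\st$-structure). Hence $\mathfrak B_+$, as a triple of a set with a binary and an equivalence relation, is exactly the canonical $\ms$-frame of $(B,\boxfr,\forall)$.

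By Lemma~\ref{lem:canonical frame for ms}, that canonical frame is an $\ms$-frame: $R_{\mathfrak B}$ is a quasi-order (using the $\sfour$-axioms for $\boxfr$), $E_{\mathfrak B}$ is an equivalence relation (using the $\sfive$-axioms for $\forall$), and the commutativity condition (E) holds (using that $B_0$ is an $\sfour$-subalgebra of $(B,\boxfr)$, which is guaranteed by the $\ms$-axiom $\boxfr\forall a\le\forall\boxfr a$). Since, as remarked in the text, $\mst$-frames are simply $\ms$-frames with a different intended interpretation of the tense modalities, this gives that $\mathfrak B_+$ is an $\mst$-frame. There is no real obstacle here; the content of the lemma is precisely that everything has been defined so as to make this reduction a one-line appeal to Lemma~\ref{lem:canonical frame for ms}, which is exactly why the authors call it obvious.
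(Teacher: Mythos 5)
Your proposal is correct and takes essentially the same route as the paper: the paper gives no proof beyond the remark that ``since $\mst$-frames are $\ms$-frames, the next lemma is obvious,'' and your argument is exactly the intended elaboration of that remark, reducing to Lemma~\ref{lem:canonical frame for ms} via the $\ms$-reduct $(B,\boxfr,\forall)$ and the observation that the two canonical-frame constructions literally coincide. Nothing is missing.
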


Thus, since $\beta:B \to \wp(X_{\mathfrak B})$ is an embedding of $\st$-algebras and $\ms$-algebras, we obtain the following representation theorem for $\mst$-algebras.

\begin{proposition} \label{prop: representation for mst}
Each $\mst$-algebra $\mathfrak{B}$ is isomorphic to a subalgebra of $(\mathfrak B_+)^+$.
\end{proposition}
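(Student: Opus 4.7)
The plan is to follow the same template as in the representation theorems for $\ms$-algebras and $\st$-algebras (Propositions on p.~earlier and Proposition~\ref{prop: representation for ts4}) by showing that the Stone map
\[
\beta : B \to \wp(X_{\mathfrak B}), \qquad \beta(a) = \{ x \in X_{\mathfrak B} \mid a \in x \},
\]
is an injective homomorphism of $\mst$-algebras. By classical Stone duality, $\beta$ is already an injective homomorphism of boolean algebras, so the content is to check that $\beta$ preserves $\boxfr$, $\boxpr$, and $\forall$.

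The key observation is that the canonical frame $\mathfrak B_+$ of an $\mst$-algebra simultaneously agrees with the canonical frame of its $\st$-reduct $(B,\boxfr,\boxpr)$ and with the canonical frame of its $\ms$-reduct $(B,\boxfr,\forall)$. Indeed, the relation $R_{\mathfrak B}$ in Definition~\ref{def:canonical frame mst} is precisely $Q_{\mathfrak B}$ of Definition~\ref{def:canonical frame for st} read off the $\st$-reduct (here the role of $H_F$ is the same in both settings), and the pair $(R_{\mathfrak B}, E_{\mathfrak B})$ coincides with the relations of Definition~\ref{def:esakia} read off the $\ms$-reduct (the role of $H$ there being played by $H_F$ here, since $\boxfr$ is the $\sfour$-operator of the $\ms$-reduct). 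This identification is the one already used implicitly in Lemma~\ref{lem:canonical frame for mst}.

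From this I would conclude preservation as follows. First, since $\beta$ is an embedding of $\st$-algebras $(B,\boxfr,\boxpr) \hookrightarrow ((B,\boxfr,\boxpr)_+)^+$ by Proposition~\ref{prop:representation st}, and since $\mathfrak B_+$ and the $\st$-canonical frame agree as frames, we obtain $\beta(\boxfr a) = \Box_{R_{\mathfrak B}} \beta(a)$ and $\beta(\boxpr a) = \Box_{R_{\mathfrak B}^{\,\breve{}}} \beta(a)$ for all $a \in B$. Second, since $\beta$ is also an embedding of $\ms$-algebras $(B,\boxfr,\forall) \hookrightarrow ((B,\boxfr,\forall)_+)^+$ by the earlier representation of $\ms$-algebras, and the $\ms$-canonical frame again agrees with $\mathfrak B_+$, we obtain $\beta(\forall a) = \forall_{E_{\mathfrak B}} \beta(a)$. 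Combining these three identities gives that $\beta : \mathfrak B \to (\mathfrak B_+)^+$ is a homomorphism of $\mst$-algebras, and injectivity is already known from the boolean case.

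There is essentially no real obstacle: the theorem is a bookkeeping consequence of the two previously established representation theorems, once one notes the compatibility of the canonical constructions on the $\st$-reduct and the $\ms$-reduct. The only point that needs explicit care is checking that the two descriptions of $R_{\mathfrak B}$ (via $x \cap H_F \subseteq y$ and via $y \cap H_P \subseteq x$) are indeed equivalent — but this is a standard consequence of the dual isomorphism $\neg : H_F \to H_P$ of Remark~\ref{rem:dual iso H_F and H_P}, which is already available in the $\st$-reduct and has been used in Definition~\ref{def:canonical frame for st}.
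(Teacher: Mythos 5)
Your proposal is correct and matches the paper's own argument, which likewise derives the result in one line from the fact that the Stone map $\beta$ is simultaneously an embedding of the $\st$-reduct and the $\ms$-reduct into the complex algebra of the (shared) canonical frame. The compatibility of the canonical-frame constructions that you spell out explicitly is exactly what the paper leaves implicit.
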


\begin{remark}\label{rem:facts about dualities ms4.t}
To recover the image of the embedding of $\mathfrak{B}$ into $(\mathfrak B_+)^+$ we need to endow $\mathfrak B_+$ with a Stone topology. This leads to the notion of perfect $\mst$-frames and a duality between the categories of $\mst$-algebras and perfect $\mst$-frames (see \cite{CarThesis} for details).
When $\mathfrak{B}$ is finite, its embedding into $(\mathfrak B_+)^+$ is an isomorphism, and hence the categories of finite $\mst$-algebras and finite $\mst$-frames are dually equivalent.
\end{remark}

By Lemmas~\ref{prop: X^+ for ms4.t} and~\ref{lem:canonical frame for mst}, if $\mathfrak{B}$ is an $\mst$-algebra, then so is $(\mathfrak B_+)^+$. As an immediate consequence, we obtain:

\begin{corollary}
$\mst$ is canonical.
\end{corollary}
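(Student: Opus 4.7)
The plan is to derive the corollary directly from the two lemmas preceding it, namely Lemma~\ref{prop: X^+ for ms4.t} (the complex algebra of an $\mst$-frame is an $\mst$-algebra) and Lemma~\ref{lem:canonical frame for mst} (the canonical frame of an $\mst$-algebra is an $\mst$-frame). These two facts already package all the structural content of canonicity; the corollary is essentially a one-line consequence.

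More explicitly, I would argue as follows. Recall that for a logic of this kind, canonicity is the statement that the class of its algebraic models is closed under the canonical extension operation $\mathfrak{B} \mapsto (\mathfrak{B}_+)^+$. So fix an arbitrary $\mst$-algebra $\mathfrak{B} = (B, \boxfr, \boxpr, \forall)$. By Lemma~\ref{lem:canonical frame for mst}, the canonical frame $\mathfrak{B}_+ = (X_{\mathfrak B}, R_{\mathfrak B}, E_{\mathfrak B})$ is an $\mst$-frame. Applying Lemma~\ref{prop: X^+ for ms4.t} to this frame, the complex algebra $(\mathfrak{B}_+)^+ = (\wp(X_{\mathfrak B}), \Box_{R_{\mathfrak B}}, \boxrinv[\mathfrak{B}], \forall_{E_{\mathfrak B}})$ is again an $\mst$-algebra. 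This is exactly what is needed.

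Combined with the representation theorem (Proposition~\ref{prop: representation for mst}), which embeds $\mathfrak{B}$ as a subalgebra of $(\mathfrak{B}_+)^+$, this also shows that soundness and completeness of $\mst$ with respect to its canonical frames is available, paralleling the analogous corollaries for $\mipc$, $\ms$, $\st$, and $\ts$ given earlier in the paper. There is no real obstacle: the substantive work — verifying that the $\st$-axioms for $\boxfr,\boxpr$, the $\sfive$-axioms for $\forall$, and the left commutativity axiom $\boxfr \forall p \to \forall \boxfr p$ survive passage to ultrafilters and then to the powerset — has already been carried out inside the two lemmas, since $\mst$-frames are the same as $\ms$-frames and the tense interaction is handled by reading both $\Box_{R_{\mathfrak B}}$ and $\boxrinv[\mathfrak B]$ off the single quasi-order $R_{\mathfrak B}$. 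Hence the proof reduces to citing Lemmas~\ref{prop: X^+ for ms4.t} and~\ref{lem:canonical frame for mst} in sequence.
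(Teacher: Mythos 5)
Your proof is correct and follows exactly the paper's route: the paper likewise derives the corollary immediately from Lemma~\ref{lem:canonical frame for mst} (the canonical frame of an $\mst$-algebra is an $\mst$-frame) followed by Lemma~\ref{prop: X^+ for ms4.t} (the complex algebra of an $\mst$-frame is an $\mst$-algebra), concluding that $(\mathfrak B_+)^+$ is an $\mst$-algebra whenever $\mathfrak B$ is. Nothing is missing.
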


A \textit{valuation} on an $\mst$-frame $\mathfrak F=(X,R,E)$ is a map $v$ associating to each propositional letter of $\mathcal L_{T\forall}$ a subset of $\mathfrak{F}$. The boolean connectives are interpreted as usual, and
\begin{equation*}
\begin{array}{l c l}
\mathfrak{F}, x \vDash_v \boxfr \varphi & \text{ iff } & (\forall y \in X) (x R y \Rightarrow y \vDash_v \varphi ), \\
\mathfrak{F}, x \vDash_v \boxpr \varphi & \text{ iff } & (\forall y \in X) (y R x \Rightarrow y \vDash_v \varphi ), \\
\mathfrak{F}, x \vDash_v \forall \varphi & \text{ iff } & (\forall y \in X) (x E y \Rightarrow y \vDash_v \varphi).
\end{array}
\end{equation*}

\begin{theorem} \label{thm:relational semantics for mst}
$\mst \vdash \varphi$ iff $\mathfrak F \vDash \varphi$ for every $\mst$-frame $\mathfrak F$.
\end{theorem}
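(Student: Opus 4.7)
The plan is to follow exactly the same template used for the analogous theorems earlier in the paper (Theorems 2.7, 2.16, 3.8, and 3.18), leveraging the algebraic completeness of $\mst$ together with the representation result in Proposition~\ref{prop: representation for mst}. There is nothing subtle here beyond verifying that the dictionary between algebraic and frame semantics works for the new language $\mathcal L_{T\forall}$; the heavy lifting has already been done in setting up $\mst$-algebras and their canonical frames.

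For soundness, I would check that each axiom of $\mst$ is valid in every $\mst$-frame $\mathfrak F=(X,R,E)$. This amounts to observing that, by Lemma~\ref{prop: X^+ for ms4.t}, $\mathfrak F^+=(\wp(X),\Box_R,\boxrinv,\forall_E)$ is an $\mst$-algebra, and that a formula $\varphi$ is valid in $\mathfrak F$ under a valuation $v$ precisely when the element $\llbracket\varphi\rrbracket_v\in\wp(X)$ computed in $\mathfrak F^+$ equals $X$. Since the axioms of $\mst$ hold in every $\mst$-algebra (by the Lindenbaum-Tarski construction), they hold in $\mathfrak F^+$; and the three necessitation rules preserve the property of being equal to $X$ in $\mathfrak F^+$. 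Hence $\mst \vdash \varphi$ implies $\mathfrak F \vDash \varphi$.

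For completeness, suppose $\mst \nvdash \varphi$. By algebraic completeness of $\mst$ with respect to $\mst$-algebras, there is an $\mst$-algebra $\mathfrak B$ and a valuation $\mu$ into $\mathfrak B$ such that the interpretation of $\varphi$ is not $1$ in $\mathfrak B$. By the representation theorem (Proposition~\ref{prop: representation for mst}), $\beta:\mathfrak B\to(\mathfrak B_+)^+$ is an embedding of $\mst$-algebras, so the interpretation of $\varphi$ in $(\mathfrak B_+)^+$ under the valuation $\beta\circ\mu$ is not the top element $X_{\mathfrak B}$. Lemma~\ref{lem:canonical frame for mst} guarantees that $\mathfrak B_+$ is an $\mst$-frame, and the valuation $\beta\circ\mu$ translates directly into a frame valuation $v$ on $\mathfrak B_+$. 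Therefore there exists $x\in X_{\mathfrak B}$ with $\mathfrak B_+, x\nvDash_v \varphi$, which gives $\mathfrak B_+\nvDash \varphi$, as desired.

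The main obstacle, if any, is purely notational bookkeeping: one has to verify that the semantic clauses for $\boxfr,\boxpr,\forall$ given above really do compute the operations $\Box_R,\boxrinv,\forall_E$ pointwise, and that the Stone map $\beta$ respects all three modal operations (this is what is meant by calling it an embedding of $\mst$-algebras, and it follows from the corresponding facts already noted for $\st$-algebras and $\ms$-algebras, since $\mst$-algebras are simply the common expansions). Once these routine identifications are made, the proof is identical in structure to Theorems~\ref{thm:relational semantics for ms4} and~\ref{thm:relational semantics for ts4}.
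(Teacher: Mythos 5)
Your proposal is correct and follows essentially the same route as the paper: soundness via the observation that $\mathfrak F^+$ is an $\mst$-algebra (the paper phrases this as inheriting soundness from the $\ms$ and $\st$ cases, which amounts to the same check), and completeness via algebraic completeness combined with Proposition~\ref{prop: representation for mst} and Lemma~\ref{lem:canonical frame for mst}. The paper states this in two lines; your version simply spells out the standard details.
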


\begin{proof}
Soundness is a consequence of the soundness of the relational semantics for $\ms$ and $\st$. Completeness follows from the algebraic completeness and the representation of $\mst$-algebras (see Proposition~\ref{prop: representation for mst}).
\end{proof}

In Section~\ref{sec:FMP} we will prove that $\mst$ has the fmp.

\subsection{Translations of $\ts$ and $\ms$ into $\mst$}

We next define two full and faithful translations $(-)^\#: \ms \to \mst$ and $(-)^\dagger: \ts \to \mst$. 
The translation of $\ms$ into $\mst$ will reflect that $\mst$ is the tense extension of $\ms$.

\begin{definition}
We define the translation $(-)^\# : \ms \to \mst$ by replacing in each formula $\varphi$ of $\mathcal{L}_{\Box \forall}$ every occurrence of $\Box$ with $\boxfr$.
\end{definition}

\begin{theorem}\label{thm:from ms4 to ms4.t full and faithful}
The translation $(-)^\#$ of $\ms$ into $\mst$ is full and faithful; that is,
\begin{equation*}
\begin{array}{l c l}
\ms \vdash \varphi & \mbox{ iff } & \mst \vdash \varphi^\#.
\end{array}
\end{equation*}
\end{theorem}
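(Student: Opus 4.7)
The plan is to establish faithfulness by an induction on proofs in $\ms$, and fullness by transferring countermodels between the two Kripke semantics, exploiting the fact that $\ms$-frames and $\mst$-frames are definitionally the same relational structures.

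For faithfulness ($\ms \vdash \varphi \Rightarrow \mst \vdash \varphi^\#$), I would verify that each axiom of $\ms$ translates to a theorem of $\mst$ and that the inference rules are preserved. The $\sfour$-axioms for $\Box$ translate to the $\sfour$-axioms for $\boxfr$, which are provable in $\mst$ since $(B,\boxfr)$ is an $\sfour$-algebra as part of the underlying $\st$-structure. The $\sfive$-axioms for $\forall$ are left untouched and are axioms of $\mst$. The left commutativity axiom $\Box \forall p \to \forall \Box p$ translates to $\boxfr \forall p \to \forall \boxfr p$, which is exactly the left commutativity axiom of $\mst$. Since $(-)^\#$ is identity on $\forall$ and on all boolean connectives, modus ponens, substitution, and the two necessitation rules translate to the corresponding rules of $\mst$.

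For fullness ($\mst \vdash \varphi^\# \Rightarrow \ms \vdash \varphi$), I would argue by contraposition using relational completeness. If $\ms \nvdash \varphi$, then by Theorem~\ref{thm:relational semantics for ms4} there exist an $\ms$-frame $\mathfrak F=(X,R,E)$, a valuation $v$, and a point $x \in X$ with $\mathfrak F, x \nvDash_v \varphi$. Because the definition of an $\mst$-frame coincides with that of an $\ms$-frame (both are triples $(X,R,E)$ with $R$ a quasi-order, $E$ an equivalence relation, and the commutativity condition (E)), I may view $\mathfrak F$ as an $\mst$-frame with the same valuation. A straightforward induction on the complexity of $\varphi$ then shows that for every $y \in X$,
\[
\mathfrak F, y \vDash_v \varphi \text{ (as an $\ms$-frame)} \iff \mathfrak F, y \vDash_v \varphi^\# \text{ (as an $\mst$-frame)},
\]
since the boolean and $\forall$-clauses are interpreted identically, and $\Box$ in $\ms$ and $\boxfr$ in $\mst$ are both interpreted using the same relation $R$. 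Hence $\mathfrak F, x \nvDash_v \varphi^\#$ as an $\mst$-frame, and Theorem~\ref{thm:relational semantics for mst} yields $\mst \nvdash \varphi^\#$.

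The argument is essentially a semantic conservativity statement: $\mst$ is a conservative extension of $\ms$ in the language without the past tense operator $\boxpr$. There is no real obstacle beyond the conceptual check that $\ms$-frames and $\mst$-frames are the same objects and that the clauses for $\Box$ and $\boxfr$ agree on them; once this is observed, both directions reduce to routine bookkeeping.
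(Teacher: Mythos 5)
Your proof is correct, and your fullness direction coincides with the paper's argument; the difference is in how you handle faithfulness. The paper proves both directions in one stroke: it observes that $\ms$-frames and $\mst$-frames are literally the same structures, that the semantic clauses for $\Box$ and $\boxfr$ (as well as for $\forall$ and the boolean connectives) coincide, and hence that $\mathfrak F, x \vDash_v \varphi$ iff $\mathfrak F, x \vDash_v \varphi^\#$ for every frame, valuation, and point; soundness and completeness of \emph{both} logics with respect to their relational semantics (Theorems~\ref{thm:relational semantics for ms4} and~\ref{thm:relational semantics for mst}) then yield the two implications simultaneously. You instead establish faithfulness by a proof-theoretic induction, checking that each $\ms$-axiom translates to an $\mst$-axiom (or theorem) and that the inference rules are preserved. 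This is also sound, and it has the mild advantage of needing only soundness of $\mst$ rather than its completeness for that direction; on the other hand it obliges you to note explicitly that $(-)^\#$ commutes with substitution, i.e.\ that $(\psi[\sigma])^\# = \psi^\#[\sigma^\#]$, which is immediate here but is the kind of bookkeeping the purely semantic route avoids. The single equivalence $\mathfrak F, x \vDash_v \varphi \Leftrightarrow \mathfrak F, x \vDash_v \varphi^\#$, which you already prove for fullness, in fact does all the work for both directions.
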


\begin{proof}
By definition, $\mst$-frames are $\ms$-frames and valuations on $\ms$-frames and $\mst$-frames coincide. The boolean connectives and monadic modality $\forall$ are interpreted the same way in $\ms$-frames and $\mst$-frames. Also, the interpretation of $\Box$ in $\ms$-frames coincides with the interpretation of $\boxfr$ in $\mst$-frames. This implies that for each frame $\mathfrak{F}=(X,R,E)$, valuation $v$, and $x \in X$, we have
$\mathfrak{F}, x \vDash \varphi$ iff $\mathfrak{F}, x \vDash \varphi^\#$
for every $\mathcal{L}_{\Box \forall}$-formula $\varphi$.
The result then follows from the soundness and completeness of $\ms$ and $\mst$ with respect to their relational semantics (see Theorems~\ref{thm:relational semantics for ms4} and~\ref{thm:relational semantics for mst}).
\end{proof}

\begin{definition}\label{def:()^dagger}
Define the translation $(-)^\dagger: \ts \to \mst$ by
\begin{equation*}
\begin{split}
p^\dagger &=p \quad \mbox{ for each propositional letter $p$}\\
(-)^\dagger & \mbox{ commutes with the boolean connectives}\\
(\Box \varphi)^\dagger &=\boxfr \varphi^\dagger\\
(\boxf \varphi)^\dagger &=\boxfr \forall \varphi^\dagger\\
(\boxp \varphi)^\dagger &=\forall \boxpr \varphi^\dagger.
\end{split}
\end{equation*}
\end{definition}

\begin{definition}
For an $\mst$-frame $\mathfrak F=(X,R,E)$ we define $\mathfrak{F}^\dagger = (X,R,Q_E)$.
\end{definition}

\begin{proposition}\label{prop:F^dagger}
\begin{enumerate}
\item[]
\item If $\mathfrak F$ is an $\mst$-frame, then $\mathfrak{F}^\dagger$ is a $\ts$-frame.
\item Each valuation $v$ on $\mathfrak{F}$ is also a valuation on $\mathfrak{F}^\dagger$ such that for each $x \in \mathfrak{F}$ and $\mathcal{ML}$-formula $\varphi$, we have
\[
\mathfrak{F^\dagger}, x \vDash_v \varphi \mbox{ iff } \mathfrak{F},x \vDash_v \varphi^\dagger.
\]
\item For each $\mathcal{ML}$-formula $\varphi$, we have
\[
\mathfrak{F}^\dagger \vDash \varphi \mbox{ iff } \mathfrak{F} \vDash \varphi^\dagger.
\]
\item For any $\ts$-frame $\mathfrak{G}$ there is an $\mst$-frame $\mathfrak{F}$ such that $\mathfrak{G}=\mathfrak{F}^\dagger$.
\end{enumerate}
\end{proposition}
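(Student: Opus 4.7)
The plan is to address the four parts in sequence. Parts (1) and (4) reduce to the correspondence between $\ms$-frames and $\ts$-frames recorded in Remark~\ref{rem:correspondence mipc, ms and ts frames}(2), while parts (2) and (3) rest on an induction on formula complexity that tracks how the translation $(-)^\dagger$ trades the relation $Q_E$ of $\mathfrak{F}^\dagger$ for the composition of $R$ and $E$ in $\mathfrak{F}$.

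Part (1) is immediate: that remark records that if $(X,R,E)$ is an $\ms$-frame then $(X,R,Q_E)$ is a $\ts$-frame, and $\mst$-frames are by definition $\ms$-frames. For part (4), given a $\ts$-frame $\mathfrak{G}=(X,R,Q)$, I would set $\mathfrak{F}=(X,R,E_Q)$; the same remark guarantees that $\mathfrak{F}$ is an $\ms$-frame (hence an $\mst$-frame) and that $Q = Q_{E_Q}$ in every $\ts$-frame, so $\mathfrak{F}^\dagger = \mathfrak{G}$ on the nose.

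For part (2), I would first note that a valuation on $\mathfrak{F}$ and a valuation on $\mathfrak{F}^\dagger$ carry the same data (subsets of $X$ indexed by propositional letters), so the same map serves both roles. I would then prove the biconditional $\mathfrak{F}^\dagger, x \vDash_v \varphi$ iff $\mathfrak{F}, x \vDash_v \varphi^\dagger$ by induction on $\varphi$. The base case holds because $p^\dagger = p$; the boolean cases are routine; and the case $\varphi = \Box \psi$ is immediate since $\Box$ in $\mathfrak{F}^\dagger$ and $\boxfr$ in $\mathfrak{F}$ are both interpreted by $R$. The substantive work is in the tense cases. For $\boxf \psi$, unfolding the definition of $Q_E$ gives that $\mathfrak{F}^\dagger, x \vDash \boxf \psi$ iff for every $z$ with $xRz$ and every $y$ with $zEy$ we have $y \vDash \psi$, which by the induction hypothesis matches the semantics of $\boxfr \forall \psi^\dagger$ at $x$ in $\mathfrak{F}$. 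For $\boxp \psi$, using the symmetry of $E$ I would rewrite ``$y Q_E x$'' as ``there exists $z$ with $xEz$ and $yRz$,'' which matches the semantics of $\forall \boxpr \psi^\dagger$ at $x$. Part (3) then follows by contraposition on valuations, entirely in parallel with Proposition~\ref{prop:skeleton of ms4-frames}(3).

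The main obstacle is the bookkeeping in the $\boxp$ case of the induction: one must reverse the composition defining $Q_E$ by exploiting the symmetry of $E$ in order to align the nested quantifier pattern of $\forall \boxpr$ with the existence-of-witness clause hidden inside $Q_E$. Once this is settled, the rest of the argument is essentially mechanical once the $\ms$-to-$\ts$ correspondence from Remark~\ref{rem:correspondence mipc, ms and ts frames}(2) is in hand.
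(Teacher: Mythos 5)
Your proposal is correct and follows essentially the same route as the paper: parts (1) and (4) via the $\ms$/$\ts$-frame correspondence of Remark~\ref{rem:correspondence mipc, ms and ts frames}(2), and parts (2) and (3) via the same induction, including the key use of the symmetry of $E$ to regroup the quantifiers in the $\boxp$ case so that $y Q_E x$ aligns with $\forall \boxpr$.
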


\begin{proof}
(1). Since $\mst$-frames coincide with $\ms$-frames, we already observed in Remark~\ref{rem:correspondence mipc, ms and ts frames}(2) that $\mathfrak{F}^\dagger$ is a $\ts$-frame.

(2).
It is clear that if $v$ is a valuation on $\mathfrak{F}$, then $v$ is also a valuation on $\mathfrak{F}^\dagger$. We show that $\mathfrak{F}^\dagger, x \vDash_v \varphi$ iff $\mathfrak{F},x \vDash_v \varphi^\dagger$ by induction on the complexity of $\varphi$. The only nontrivial cases are when $\varphi$ is of the form $\Box \psi$, $\boxf \psi$ and $\boxp \psi$. Suppose $\varphi=\Box \psi$. Then, by the induction hypothesis,
\begin{align*}
\mathfrak{F}^\dagger, x \vDash_v \Box \psi &\mbox{ iff } (\forall y \in X)(xRy \, \Rightarrow \, \mathfrak{F}^\dagger, y \vDash_v \psi)\\
 &\mbox{ iff } (\forall y \in X)(xRy \, \Rightarrow \, \mathfrak{F}, y \vDash_v \psi^\dagger) \\
 &\mbox{ iff } \mathfrak{F}, x \vDash_v \boxfr \psi^\dagger\\
 &\mbox{ iff } \mathfrak{F}, x \vDash_v (\Box \psi)^\dagger.
\end{align*}
Suppose $\varphi=\boxf \psi$. Then, by the induction hypothesis,
\begin{align*}
\mathfrak{F}^\dagger, x \vDash_v \boxf \psi &\mbox{ iff } (\forall y \in X)(x Q_E y \, \Rightarrow \, \mathfrak{F}^\dagger, y \vDash_v \psi)\\
 &\mbox{ iff } (\forall z \in X)(xRz \, \Rightarrow \, (\forall y \in X)(zEy \, \Rightarrow \, \mathfrak{F}^\dagger, y \vDash_v \psi))\\
  &\mbox{ iff } (\forall z \in X)(xRz \, \Rightarrow \, (\forall y \in X)(zEy \, \Rightarrow \, \mathfrak{F}, y \vDash_v \psi^\dagger)) \\
  &\mbox{ iff } (\forall z \in X)(xRz \, \Rightarrow \, \mathfrak{F}, z \vDash \forall \psi^\dagger)\\
 &\mbox{ iff } \mathfrak{F}, x \vDash_v \boxfr \forall \psi^\dagger\\
 &\mbox{ iff } \mathfrak{F}, x \vDash_v (\boxf \psi)^\dagger.
\end{align*}
Suppose $\varphi=\boxp \psi$. Then, by the induction hypothesis,
\begin{align*}
\mathfrak{F}^\dagger, x \vDash_v \boxp \psi &\mbox{ iff } (\forall y \in X)(y Q_E x \, \Rightarrow \, \mathfrak{F}^\dagger, y \vDash_v \psi)\\
 &\mbox{ iff } (\forall y,z \in X)(yRz \ \& \ zEx \, \Rightarrow \, \mathfrak{F}^\dagger, y \vDash_v \psi)\\
 &\mbox{ iff } (\forall z \in X)(zEx \, \Rightarrow \, (\forall y \in X)(yRz \, \Rightarrow \, \mathfrak{F}^\dagger, y \vDash_v \psi))\\
  &\mbox{ iff } (\forall z \in X)(zEx \, \Rightarrow \, (\forall y \in X)(yRz \, \Rightarrow \, \mathfrak{F}, y \vDash_v \psi^\dagger)) \\
  &\mbox{ iff } (\forall z \in X)(zEx \, \Rightarrow \, \mathfrak{F}, z \vDash \boxpr \psi^\dagger)\\
  &\mbox{ iff } (\forall z \in X)(xEz \, \Rightarrow \, \mathfrak{F}, z \vDash \boxpr \psi^\dagger)\\
 &\mbox{ iff } \mathfrak{F}, x \vDash_v \forall \boxpr \psi^\dagger\\
 &\mbox{ iff } \mathfrak{F}, x \vDash_v (\boxp \psi)^\dagger.
\end{align*}

(3). The proof that $\mathfrak{F}^\dagger \vDash \varphi$ iff $\mathfrak{F} \vDash \varphi^\dagger$ is analogous to that of Proposition~\ref{prop:skeleton of ms4-frames}(3).

(4). Let $\mathfrak{G}=(X,R,Q)$ be a $\ts$-frame. As we observed in Remark~\ref{rem:correspondence mipc, ms and ts frames}, $\mathfrak{F}=(X,R,E_Q)$ is an $\ms$-frame, and so an $\mst$-frame.
By definition of $\ts$-frames we have that $Q=Q_{E_Q}$, and hence $\mathfrak{G}=\mathfrak{F}^\dagger$.
\end{proof}

\begin{theorem} \label{thm:()^dagger full and faithful}
The translation $(-)^\dagger$ of $\ts$ into $\mst$ is full and faithful; that is,
\begin{equation*}
\begin{array}{l c l}
\ts \vdash \varphi & \mbox{ iff } & \mst \vdash \varphi^\dagger.
\end{array}
\end{equation*}
\end{theorem}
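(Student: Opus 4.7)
The plan is to mimic verbatim the template used in Theorem~\ref{thm:godel translation} and Theorem~\ref{thm:()^natural full and faithful}. All the work has already been done in Proposition~\ref{prop:F^dagger}, which gives us both directions of a frame correspondence: every $\mst$-frame $\mathfrak F$ produces a $\ts$-frame $\mathfrak F^\dagger$ on which $\varphi$ and $\varphi^\dagger$ have the same truth value (parts (1)--(3)), and conversely every $\ts$-frame arises in this way (part (4)). The two remaining ingredients are the relational completeness of $\ts$ (Theorem~\ref{thm:relational semantics for ts4}) and of $\mst$ (Theorem~\ref{thm:relational semantics for mst}).

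For faithfulness, I would argue contrapositively. Assume $\mst \nvdash \varphi^\dagger$. By Theorem~\ref{thm:relational semantics for mst} there is an $\mst$-frame $\mathfrak F$ with $\mathfrak F \nvDash \varphi^\dagger$. By Proposition~\ref{prop:F^dagger}(1), $\mathfrak F^\dagger$ is a $\ts$-frame, and by Proposition~\ref{prop:F^dagger}(3) we have $\mathfrak F^\dagger \nvDash \varphi$. Hence by Theorem~\ref{thm:relational semantics for ts4}, $\ts \nvdash \varphi$.

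For fullness, I would proceed in the opposite direction. Assume $\ts \nvdash \varphi$. By Theorem~\ref{thm:relational semantics for ts4} there is a $\ts$-frame $\mathfrak G$ with $\mathfrak G \nvDash \varphi$. Proposition~\ref{prop:F^dagger}(4) yields an $\mst$-frame $\mathfrak F$ with $\mathfrak G = \mathfrak F^\dagger$, so $\mathfrak F^\dagger \nvDash \varphi$. A second application of Proposition~\ref{prop:F^dagger}(3) then gives $\mathfrak F \nvDash \varphi^\dagger$, and by Theorem~\ref{thm:relational semantics for mst} we conclude $\mst \nvdash \varphi^\dagger$.

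There is no real obstacle here: the entire substance of the argument was already absorbed into Proposition~\ref{prop:F^dagger}, in particular the non-trivial clause (4) where one must check that $Q = Q_{E_Q}$ for a $\ts$-frame so that $\mathfrak F := (X,R,E_Q)$ genuinely recovers $\mathfrak G$ as $\mathfrak F^\dagger$. Given that, the theorem reduces to a short bookkeeping exercise chaining the four cited results.
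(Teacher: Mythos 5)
Your proof is correct and coincides with the paper's own argument: both directions are handled contrapositively via the relational completeness of $\ts$ and $\mst$ together with the frame correspondence of Proposition~\ref{prop:F^dagger}, exactly as in the paper. No differences worth noting.
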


\begin{proof}
To prove faithfulness, suppose that $\mst \nvdash\varphi^\dagger$. By Theorem~\ref{thm:relational semantics for mst}, there is an $\mst$-frame $\mathfrak{F}$ such that $\mathfrak{F} \nvDash \varphi^\dagger$. By Proposition~\ref{prop:F^dagger}, $\mathfrak{F}^\dagger$ is a $\ts$-frame and $\mathfrak{F}^\dagger \nvDash \varphi$. Thus, $\ts \nvdash \varphi$ by Theorem~\ref{thm:relational semantics for ts4}. For fullness, if $\ts \nvdash \varphi$, then there is a $\ts$-frame $\mathfrak{G}$ such that $\mathfrak{G} \nvDash \varphi$. By Proposition~\ref{prop:F^dagger}(4), there is an $\mst$-frame $\mathfrak{F}$ such that $\mathfrak{G}$ is isomorphic to $\mathfrak{F}^\dagger$. Therefore, $\mathfrak{F}^\dagger \nvDash \varphi$. Proposition~\ref{prop:F^dagger}(3) then implies that $\mathfrak{F} \nvDash \varphi^\dagger$. Thus, $\mst \nvdash \varphi^\dagger$.
\end{proof}

\begin{remark}\label{rem:justification dagger and flat}
\begin{enumerate}
\item[]
\item The definition of the translation $(-)^\dagger: \ts \to \mst$ is suggested by the correspondence between $\ts$-frames and $\mst$-frames. 
Indeed, given an $\mst$-frame $\mathfrak{F}$, the relation $Q_E$ in $\mathfrak{F}^\dagger$ is the composition of $R$ and $E$, and the inverse relation $\qeinv$ is the composition of $E$ and $\rinv$. Therefore, the modalities $\boxf$ and $\boxp$ are translated as 
$\boxfr \forall$ and $\forall \boxpr$, respectively.
\item 
It is natural to consider a modification of $(-)^\dagger$ where $\boxp$ is translated as $\boxpr \forall$. However, such a modification
is neither full nor faithful. Nevertheless, its 
composition with $(-)^\natural:\mipc \to \ts$ is full and faithful, as we will 
see at the end of Section~4.3.
\end{enumerate}
\end{remark}

\subsection{Translations of $\mipc$ into $\mst$}

We denote the composition of $(-)^\#$ and $(-)^t$ by $(-)^{t \#}$, and the composition of $(-)^\dagger$ and $(-)^\natural$ by $(-)^{\natural \dagger}$. Since we proved that all these four translations are full and faithful, we also have that $(-)^{t \#}$ and $(-)^{\natural \dagger}$ are full and faithful translations of $\mipc$ into $\mst$. We have thus obtained the following diagram of full and faithful translations. We next show that this diagram is commutative up to logical equivalence in $\mst$.
\[
\begin{tikzcd}
& \ms \arrow[rd, "( \; )^\#"] & \\
\mipc \arrow[ru, " ( \; )^t"] \arrow[rd, "(\; )^\natural"'] & & \mst \\
& \ts \arrow[ru, "(\; )^\dagger"'] &
\end{tikzcd}
\]

\begin{lemma}\label{lem:tsharp in mst is an upset}
For any formula $\varphi$ of $\mathcal L_{\forall\exists}$, we have
\[
\mst \vdash \varphi^{t \#} \leftrightarrow \diapr \varphi^{t \#}.
\]
\end{lemma}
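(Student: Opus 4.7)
The plan is to prove the biconditional by handling each direction via standard tense-logic manipulations, using that $(-)^\#$ preserves the content of Lemma~\ref{lem:translation in ms4 is upset}.

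For the direction $\varphi^{t\#} \to \diapr \varphi^{t\#}$, I would just invoke the $\st$-theorem schema $p \to \diapr p$. This follows immediately from the $\sfour$-axiom $\boxpr \neg p \to \neg p$ (applied to $\neg p$) by contraposition, since $\diapr$ abbreviates $\neg \boxpr \neg$. In particular, this scheme is available in $\mst$.

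For the harder direction $\diapr \varphi^{t\#} \to \varphi^{t\#}$, the plan has two ingredients. \emph{First}, I would establish
\[
\mst \vdash \varphi^{t\#} \to \boxfr \varphi^{t\#}.
\]
This is simply the $(-)^\#$-image of Lemma~\ref{lem:translation in ms4 is upset}: since $\ms \vdash \varphi^t \to \Box \varphi^t$ and $(-)^\#$ replaces $\Box$ by $\boxfr$, Theorem~\ref{thm:from ms4 to ms4.t full and faithful} gives $\mst \vdash \varphi^{t\#} \to \boxfr \varphi^{t\#}$. \emph{Second}, I would derive the tense theorem
\[
\mst \vdash \diapr \boxfr p \to p,
\]
which is standard in $\st$: start from the tense axiom $p \to \boxpr \diafr p$, substitute $\neg p$ for $p$ to obtain $\neg p \to \boxpr \neg \boxfr p$, and take contrapositives (using the classical dualities $\diapr = \neg\boxpr\neg$ and $\diafr = \neg\boxfr\neg$).

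With these in hand, monotonicity of $\diapr$ applied to the first ingredient yields $\diapr \varphi^{t\#} \to \diapr \boxfr \varphi^{t\#}$, and composition with the second (instantiated at $p = \varphi^{t\#}$) gives $\diapr \varphi^{t\#} \to \varphi^{t\#}$. Combining both directions completes the proof. I do not foresee a genuine obstacle here; the argument is modular, and the only care needed is to correctly derive $\diapr \boxfr p \to p$ from the tense axioms. Semantically this just reflects the fact that $\diapr$ and $\boxfr$ are adjoint pair (interpreted via $R$ and $\rinv$), but I prefer the syntactic derivation above since it stays inside $\mst$ and needs no appeal to semantics.
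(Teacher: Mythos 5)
Your proof is correct and follows essentially the same route as the paper: both obtain $\mst \vdash \varphi^{t\#} \to \boxfr \varphi^{t\#}$ from Lemma~\ref{lem:translation in ms4 is upset} via Theorem~\ref{thm:from ms4 to ms4.t full and faithful}, then apply monotonicity of $\diapr$ and the tense axiom (in the contraposed form $\diapr\boxfr p \to p$) to get $\diapr\varphi^{t\#} \to \varphi^{t\#}$, with the converse direction coming from reflexivity. The only difference is that you spell out the syntactic derivation of $\diapr\boxfr p \to p$ that the paper leaves implicit.
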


\begin{proof}
By Lemma~\ref{lem:translation in ms4 is upset} and Theorem~\ref{thm:from ms4 to ms4.t full and faithful}, $\mst \vdash \varphi^{t \#} \to \boxfr \varphi^{t \#}$. Therefore, $\mst \vdash \diapr \varphi^{t \#} \to \diapr \boxfr \varphi^{t \#}$. The tense axiom then gives $\mst \vdash \diapr \varphi^{t \#} \to \varphi^{t \#}$.
Thus, $\mst \vdash \varphi^{t \#} \leftrightarrow \diapr \varphi^{t \#}$.
\end{proof}

\begin{theorem}\label{thm:commutativity of diagram}
For any $\mathcal L_{\forall\exists}$-formula $\chi$ we have
\[
\mst \vdash \chi^{t \#} \leftrightarrow \chi^{\natural \dagger}.
\]
\end{theorem}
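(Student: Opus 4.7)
The plan is to proceed by induction on the complexity of $\chi$, using the fact that the translations $(-)^{t\#}$ and $(-)^{\natural\dagger}$ differ only in how they handle the existential quantifier, and to close the $\exists$-case by appealing to Lemma~\ref{lem:tsharp in mst is an upset}.

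First I would unfold the two compositions to see where they agree. Computing $(-)^{t\#}$ amounts to the G\"odel translation with $\Box$ replaced by $\boxfr$ throughout, and similarly $(-)^{\natural\dagger}$ replaces each $\Box$ by $\boxfr$, each $\forall$ by $\boxfr \forall$, and each $\exists$ (after unfolding $\diap$ via $\neg\boxp\neg$) by $\exists \diapr$. Side by side this gives $p^{t\#} = \boxfr p = p^{\natural\dagger}$, both translations commute with $\land$, $\lor$, $\bot$, and $\to$ (modulo wrapping the antecedent appropriately), and $(\forall\varphi)^{t\#} = \boxfr \forall \varphi^{t\#}$ matches $(\forall\varphi)^{\natural\dagger} = \boxfr \forall \varphi^{\natural\dagger}$. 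The only genuine discrepancy is $(\exists\varphi)^{t\#} = \exists \varphi^{t\#}$ versus $(\exists\varphi)^{\natural\dagger} = \exists \diapr \varphi^{\natural\dagger}$.

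The inductive step for the cases $\chi = \bot, p, \varphi\land\psi, \varphi\lor\psi, \varphi\to\psi, \forall\varphi$ is then routine: by the induction hypothesis $\mst \vdash \varphi^{t\#} \leftrightarrow \varphi^{\natural\dagger}$ (and likewise for $\psi$), and since $\mst$ is a normal modal logic, replacement of provable equivalents inside $\land, \lor, \to, \boxfr, \forall$ yields the desired equivalence.

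The only interesting case is $\chi = \exists \varphi$. The key observation is Lemma~\ref{lem:tsharp in mst is an upset}: $\mst \vdash \varphi^{t\#} \leftrightarrow \diapr \varphi^{t\#}$. Combining this with the induction hypothesis $\mst \vdash \varphi^{t\#} \leftrightarrow \varphi^{\natural\dagger}$ and applying $\diapr$-monotonicity (or simply replacement), one gets $\mst \vdash \varphi^{t\#} \leftrightarrow \diapr \varphi^{\natural\dagger}$. Prefixing $\exists$ then gives
\[
\mst \vdash \exists \varphi^{t\#} \leftrightarrow \exists \diapr \varphi^{\natural\dagger},
\]
which is exactly $(\exists \varphi)^{t\#} \leftrightarrow (\exists \varphi)^{\natural\dagger}$. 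I do not foresee any serious obstacle here: the main content has already been isolated in Lemma~\ref{lem:tsharp in mst is an upset}, and once that is available, the induction is essentially a bookkeeping exercise verifying that the two translations agree outside the $\exists$-clause and that the left tense axiom $p \to \boxfr \diapr p$ absorbs the extra $\diapr$ appearing in $(\exists\varphi)^{\natural\dagger}$.
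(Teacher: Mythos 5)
Your proposal is correct and follows essentially the same route as the paper: unfold both compositions, observe that they agree except in the $\exists$-clause (where $(\exists\varphi)^{\natural\dagger}$ unfolds to $\exists\diapr\varphi^{\natural\dagger}$), and close that case by combining the induction hypothesis with Lemma~\ref{lem:tsharp in mst is an upset}. The only cosmetic difference is the order in which you chain the equivalences before prefixing $\exists$, and your parenthetical attribution of the absorption of $\diapr$ to the axiom $p\to\boxfr\diapr p$ rather than to $p\to\boxpr\diafr p$ (the one actually used inside the lemma) is immaterial since you invoke the lemma as a black box.
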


\begin{proof}
The two compositions compare as follows:
\begin{align*}
&\bot^{t \#}=\bot  && \bot^{\natural \dagger}=\bot \\
&p^{t \#}=\boxfr p  && p^{\natural \dagger}=\boxfr p \\
&(\varphi \wedge \psi)^{t \#}= \varphi^{t \#} \wedge \psi^{t \#} \qquad && (\varphi \wedge \psi)^{\natural \dagger}=
\varphi^{\natural \dagger} \wedge \psi^{\natural \dagger} \\
&(\varphi \vee \psi)^{t \#}= \varphi^{t \#} \vee \psi^{t \#} \qquad && (\varphi \vee \psi)^{\natural \dagger}=
\varphi^{\natural \dagger} \vee \psi^{\natural \dagger} \\
&(\varphi \to \psi)^{t \#}= \boxfr(\neg \varphi^{t \#} \vee \psi^{t \#}) \qquad && (\varphi \to \psi)^{\natural \dagger}=
\boxfr(\neg \varphi^{\natural \dagger} \vee \psi^{\natural \dagger}) \\
&(\forall \varphi)^{t \#}= \boxfr \forall \varphi^{t \#} && (\forall \varphi)^{\natural \dagger}= \boxfr \forall \varphi^{\natural \dagger} \\
&(\exists \varphi)^{t \#}= \exists \varphi^{t \#} && (\exists \varphi)^{\natural \dagger}= (\diap \varphi^\natural)^\dagger=
(\neg \boxp \neg \varphi^\natural)^\dagger \\
& && \hphantom{(\exists \varphi)^{\natural \dagger}} = \neg \forall \boxpr \neg \varphi^{\natural \dagger}
\end{align*}
Thus, they are identical except the $\exists$-clause. Therefore, to prove that $\mst \vdash \chi^{t \#} \leftrightarrow \chi^{\natural \dagger}$ it is sufficient to prove that $\mst \vdash \varphi^{t \#} \leftrightarrow \varphi^{\natural \dagger}$ implies $\mst \vdash \exists \varphi^{t \#} \leftrightarrow \neg \forall \boxpr \neg \varphi^{\natural \dagger}$.
Since $\mst \vdash \neg \forall \boxpr \neg \varphi^{\natural \dagger} \leftrightarrow \exists \diapr \varphi^{\natural \dagger}$, it is enough to prove that $\mst \vdash \exists \varphi^{t \#} \leftrightarrow \exists \diapr \varphi^{\natural \dagger}$.
From the assumption $\mst \vdash \varphi^{t \#} \leftrightarrow \varphi^{\natural \dagger}$ it follows that $\mst \vdash \exists \diapr \varphi^{t \#} \leftrightarrow \exists \diapr \varphi^{\natural \dagger}$. By Lemma~\ref{lem:tsharp in mst is an upset}, $\mst \vdash \varphi^{t \#} \leftrightarrow \diapr \varphi^{t \#}$ and hence $\mst \vdash \exists \varphi^{t \#} \leftrightarrow \exists \diapr \varphi^{t \#}$. Thus, $\mst \vdash \exists \varphi^{t \#} \leftrightarrow \exists \diapr \varphi^{\natural \dagger}$.
\end{proof}

As we pointed out in Remark~\ref{rem:justification dagger and flat}(2), there is another natural translation of $\mipc$ into $\mst$.

\begin{definition}\label{def:()^flat}
Let $(-)^\flat:\mipc \to \mst$ be the translation that
differs from $(-)^{t \#}$ and $(-)^{\natural \dagger}$ only in the $\exists$-clause:
\begin{align*}
&(\exists \varphi)^\flat= \diapr \exists \varphi^\flat.
\end{align*}
\end{definition}

The translation $(-)^\flat$ provides a temporal interpretation of intuitionistic monadic quantifiers that is similar to the translation $(-)^\natural$ (see also Section~6).

\begin{theorem}\label{thm:flat full and faith and equiv}
For any $\mathcal L_{\forall\exists}$-formula $\chi$ we have
\[
\mst \vdash \chi^\flat \leftrightarrow \chi^{t \#}.
\]
Consequently, the translation $(-)^\flat$ of $\mipc$ into $\mst$ is full and faithful.
\end{theorem}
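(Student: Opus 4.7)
The plan is to prove the equivalence $\mst \vdash \chi^\flat \leftrightarrow \chi^{t\#}$ by induction on the complexity of $\chi$, and then derive fullness and faithfulness of $(-)^\flat$ as a corollary from the already established fullness and faithfulness of $(-)^{t\#}$.

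Inspecting the clauses of $(-)^\flat$ defined in Definition~\ref{def:()^flat} against the clauses of $(-)^{t\#}$ exhibited in the proof of Theorem~\ref{thm:commutativity of diagram}, the two translations agree on $\bot$, propositional letters, $\wedge$, $\vee$, $\to$, and $\forall$. Hence all of these cases of the induction go through by standard $\mst$-reasoning (propositional logic together with $\boxfr$- and $\forall$-necessitation and congruence) from the inductive hypothesis. The only nontrivial case is $\chi = \exists \varphi$, where we must show that, assuming $\mst \vdash \varphi^\flat \leftrightarrow \varphi^{t\#}$, we have
\[
\mst \vdash \diapr \exists \varphi^\flat \leftrightarrow \exists \varphi^{t\#}.
\]
By the inductive hypothesis and replacement, this reduces to establishing
\[
\mst \vdash \diapr \exists \varphi^{t\#} \leftrightarrow \exists \varphi^{t\#}.
\]

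The forward direction $\exists \varphi^{t\#} \to \diapr \exists \varphi^{t\#}$ is immediate from the $\st$-axiom $p \to \boxfr \diapr p$ together with the $\sfour$-axiom $\boxfr p \to p$, which jointly yield $\mst \vdash p \to \diapr p$. For the reverse direction I would use Lemma~\ref{lem:tsharp in mst is an upset} (and the analogue $\mst \vdash \varphi^{t\#} \to \boxfr \varphi^{t\#}$ obtained from Lemma~\ref{lem:translation in ms4 is upset} via Theorem~\ref{thm:from ms4 to ms4.t full and faithful}) to rewrite $\exists \varphi^{t\#}$ as $\exists \boxfr \varphi^{t\#}$, then apply the $\ms$-valid inequality $\exists \Box p \to \Box \exists p$ (shown in the first paragraph of the proof of Lemma~\ref{lem:translation in ms4 is upset}, transferred through $(-)^\#$) to obtain $\exists \varphi^{t\#} \to \boxfr \exists \varphi^{t\#}$. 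Hence $\diapr \exists \varphi^{t\#} \to \diapr \boxfr \exists \varphi^{t\#}$, and finally the theorem $\diapr \boxfr p \to p$ (derivable from the $\st$-axiom $p \to \boxpr \diafr p$ by contrapositive substitution) gives $\diapr \exists \varphi^{t\#} \to \exists \varphi^{t\#}$.

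Once the equivalence is proved, the corollary is routine: for any $\mathcal{L}_{\forall\exists}$-formula $\chi$, we have $\mst \vdash \chi^\flat$ iff $\mst \vdash \chi^{t\#}$, and the latter is equivalent to $\mipc \vdash \chi$ by Theorems~\ref{thm:godel translation} and~\ref{thm:from ms4 to ms4.t full and faithful}. The main obstacle I expect is the $\exists$-case, and specifically the backward direction $\diapr \exists \varphi^{t\#} \to \exists \varphi^{t\#}$: this is where one must combine three ingredients (the upset lemma for $(-)^{t\#}$, the commutativity-style inequality $\exists \boxfr p \to \boxfr \exists p$ from $\ms$, and the tense theorem $\diapr \boxfr p \to p$), and where it becomes essential that $\varphi^{t\#}$ is already ``$\boxfr$-closed'' in the sense of Lemma~\ref{lem:translation in ms4 is upset}, since $\diapr \exists p \to \exists p$ is not valid for arbitrary $p$ in $\mst$.
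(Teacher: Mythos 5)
Your proof is correct and follows essentially the same route as the paper: induction on $\chi$, with everything reducing to the single nontrivial claim $\mst \vdash \diapr\exists\varphi^{t\#} \leftrightarrow \exists\varphi^{t\#}$. The only difference is that you re-derive this equivalence by hand (via $\exists\boxfr p \to \boxfr\exists p$ and $\diapr\boxfr p \to p$), whereas the paper gets it in one step by applying Lemma~\ref{lem:tsharp in mst is an upset} to the formula $\exists\varphi$ itself, since $(\exists\varphi)^{t\#} = \exists\varphi^{t\#}$.
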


\begin{proof}
The translations $()^\flat$ and $(-)^{t \#}$ are identical except the $\exists$-clause. Therefore, to prove that $\mst \vdash \chi^\flat \leftrightarrow \chi^{t \#}$ it is sufficient to prove that $\mst \vdash \varphi^\flat \leftrightarrow \varphi^{t \#}$ implies $\mst \vdash \diapr \exists \varphi^\flat \leftrightarrow \exists \varphi^{t \#}$.
By Lemma~\ref{lem:tsharp in mst is an upset}, $\mst \vdash (\exists \varphi)^{t \#} \leftrightarrow \diapr (\exists \varphi)^{t \#}$ which means $\mst \vdash \exists \varphi^{t \#} \leftrightarrow  \diapr \exists \varphi^{t \#}$. From the assumption $\mst \vdash \varphi^\flat \leftrightarrow \varphi^{t \#}$ it follows that $\mst \vdash \diapr \exists \varphi^\flat \leftrightarrow \diapr \exists \varphi^{t \#}$. Thus, $\mst \vdash  \diapr \exists \varphi^\flat \leftrightarrow \exists \varphi^{t \#}$. Since $(-)^{t \#}$ is full and faithful, it follows that $(-)^\flat$ is full and faithful as well.
\end{proof}

As a result, we obtain the following diagram of full and faithful translations that is commutative up to logical equivalence in $\mst$.
\[
\begin{tikzcd}
& \ms \arrow[rd, "( \; )^\#"] & \\
\mipc \arrow[ru, " ( \; )^t"] \arrow[rd, "(\; )^\natural"']  \arrow[rr,  " ( \; )^\flat"]& & \mst \\
& \ts \arrow[ru, "(\; )^\dagger"'] &
\end{tikzcd}
\]

\section{Finite Model Property}\label{sec:FMP}

In this
section we prove that the logics studied in this paper all have the fmp. Our strategy is to first establish the fmp for $\mst$, and then use the full and faithful translations to conclude that all the logics we have considered have the fmp.

Let $\mathfrak{B}=(B, \boxfr, \boxpr, \forall)$ be an $\mst$-algebra and $S \subseteq B$ a finite subset. Then $(B, \forall)$ is an $\sfive$-algebra.
Let $(B', \forall')$ be the $\sfive$-subalgebra of $(B, \forall)$ generated by $S$. It is well known (see \cite{Bas58}) that $(B', \forall')$ is finite. Define $\boxfrdot$ and $\boxprdot$ on $B'$ by
\begin{align*}
\boxfrdot a &= \bigvee \{ b \in B'\cap H_F \mid b \leq a \} \\
\boxprdot a &= \bigvee \{ b \in B'\cap H_P \mid b \leq a \}.
\end{align*}
We denote $(B', \boxfrdot, \boxprdot, \forall')$ by $\mathfrak{B}_S$.

\begin{lemma} \label{lem:boxfdot and boxpdot}
$\mathfrak{B}_S$ is an $\mst$-algebra.
\end{lemma}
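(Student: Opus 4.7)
The plan is to verify in turn that $(B', \boxfrdot)$ and $(B', \boxprdot)$ are $\sfour$-algebras, that the tense axioms \eqref{PF} and \eqref{FP} hold for $(B', \boxfrdot, \boxprdot)$, and that the $\ms$-commutativity axiom $\boxfrdot \forall' a \le \forall' \boxfrdot a$ holds for $(B', \boxfrdot, \forall')$.

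First, I would observe that $K_F := B' \cap H_F$ is closed under finite joins in $B$: for $a, b \in H_F$, monotonicity of $\boxfr$ gives $a \join b = \boxfr a \join \boxfr b \le \boxfr(a \join b) \le a \join b$. Since $B'$ is finite, $\boxfrdot a$ is therefore a finite join of elements of $K_F$ and hence itself lies in $K_F$. The interior operator axioms for $\boxfrdot$ then follow routinely: $\boxfrdot \boxfrdot a = \boxfrdot a$ because $\boxfrdot a \in K_F$, and $\boxfrdot(a \meet b) = \boxfrdot a \meet \boxfrdot b$ from $K_F$ also being closed under meets. The same argument with $K_P := B' \cap H_P$ takes care of $\boxprdot$.

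For the tense axioms, I would use Remark~\ref{rem:dual iso H_F and H_P}, which says that $\neg$ maps $H_F$ bijectively onto $H_P$; since $B'$ is a boolean subalgebra, $\neg$ restricts to a bijection $K_F \to K_P$. Dualizing the definition of $\boxfrdot$ then yields $\diafrdot a = \bigwedge \{ c \in K_P \mid a \le c \}$, the least element of $K_P$ above $a$; consequently $\boxprdot \diafrdot a = \diafrdot a \ge a$, which establishes \eqref{PF}. The argument for \eqref{FP} is symmetric.

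The hard part will be the $\ms$-commutativity axiom, because the naive approach fails: we only have $\boxfrdot a \le \boxfr a$, so applying the $\ms$-axiom of $B$ to $b = \boxfr b \le \boxfr \forall a \le \forall \boxfr a$ gives a bound in terms of $\forall \boxfr a$, not $\forall \boxfrdot a$. The trick I would use is to pass through $\exists$ first. Given $b \in K_F$ with $b \le \forall' a = \forall a$, Lemma~\ref{lem:equivalent axioms ms-alg}(4) yields $\boxfr \exists \boxfr b = \exists \boxfr b$, and since $b = \boxfr b$ this forces $\boxfr \exists b = \exists b$, i.e., $\exists b \in H_F$; as $B'$ is closed under $\exists$, in fact $\exists b \in K_F$. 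Moreover $\exists b \le \exists \forall a = \forall a \le a$, so $\exists b$ is one of the elements in the join defining $\boxfrdot a$, whence $\exists b \le \boxfrdot a$. Using that $\exists b$ is a $\forall$-fixpoint and that $\forall$ is monotone, we then obtain $b \le \exists b = \forall \exists b \le \forall \boxfrdot a = \forall' \boxfrdot a$. Taking the join over all such $b$ gives $\boxfrdot \forall' a \le \forall' \boxfrdot a$, completing the verification.
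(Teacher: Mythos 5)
Your proposal is correct and follows essentially the same route as the paper's proof: the same computation of $\diafrdot a$ as the least element of $B'\cap H_P$ above $a$ via the dual isomorphism $\neg : H_F \to H_P$ for the tense axioms, and the same key step for the commutativity axiom of replacing each $b \in B' \cap H_F$ below $\forall a$ by $\exists b$, using Lemma~\ref{lem:equivalent axioms ms-alg} to see that $\exists b \in B' \cap H_F$. The only cosmetic difference is that the paper packages the last step as the claim that $B' \cap B_0$ is an $\sfour$-subalgebra of $(B',\boxfrdot)$ and then applies $\boxfrdot \forall' a = \forall' \boxfrdot \forall' a \le \forall' \boxfrdot a$, whereas you bound each joinand of $\boxfrdot \forall' a$ by $\forall' \boxfrdot a$ directly.
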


\begin{proof}
By definition, $(B',\forall')$ is an $\sfive$-algebra. Since $(B, \boxfr)$ and $(B, \boxpr)$ are both $\sfour$-algebras, a standard argument (see \cite[Lem.~4.14]{MT44}) shows that $(B', \boxfrdot)$ and $(B', \boxprdot)$ are also $\sfour$-algebras. We show that $(B', \boxfrdot, \boxprdot)$ is an $\st$-algebra.
Let $H_F$ be the algebra of $\boxfr$-fixpoints and $H_P$ the algebra of $\boxpr$-fixpoints of $\mathfrak{B}$. As noted in Remark~\ref{rem:dual iso H_F and H_P}, $\neg$ is a dual isomorphism between $H_F$ and $H_P$. Therefore,
\begin{align*}
\diafrdot  a := \neg \boxfrdot \neg a & = \neg \bigvee \{ b \in B'\cap H_F \mid b \leq \neg a \} \\
& = \neg \bigvee \{  b \in B'\cap H_F \mid a \leq \neg b  \} \\
& = \bigwedge \{ \neg b \mid b \in B'\cap H_F, \; a \leq \neg b  \} \\
& = \bigwedge \{ c \in B' \cap H_P \mid  a \leq c  \}.
\end{align*}
Since this meet is finite and $\boxpr$ commutes with finite meets, we obtain
\begin{align*}
\boxpr \diafrdot a & = \boxpr \left( \bigwedge \{ c \in B' \cap H_P \mid  a \leq c  \} \right) \\
& = \bigwedge \{ \boxpr c \mid c \in B' \cap H_P , \; a \leq c \}  \\
& = \bigwedge \{ c \in B' \cap H_P \mid  a \leq c  \} \\
& = \diafrdot  a.
\end{align*}
Thus, $\diafrdot a \in B' \cap H_P $ which yields
\[
\boxprdot \diafrdot  a = \bigvee \{ b \in B' \cap H_P \mid b \leq \diafrdot a \} = \diafrdot  a.
\]
Similarly, we have that $\diaprdot  a = \bigwedge \{ c \in B' \cap H_F \mid  a \leq c \}$ from which we deduce that $\boxfrdot \diaprdot a = \diaprdot a$. This implies that $a \le \boxprdot \diafrdot  a$ and $a \le \boxfrdot \diaprdot  a$. Consequently, $(B, \boxfrdot, \boxprdot)$ is an $\st$-algebra.

It remains to show that $\boxfrdot \forall' a \le \forall' \boxfrdot a$ holds in $\mathfrak{B}_S$. For this it is sufficient to show that the set $B_0':=B' \cap B_0$ of the $\forall'$-fixpoints of $B'$ is an $\sfour$-subalgebra of $(B', \boxfrdot)$ because then $\boxfrdot \forall' a=\forall' \boxfrdot \forall' a \le \forall' \boxfrdot a$. Suppose that $d \in B_0'$. Then $\boxfrdot d = \bigvee \{ b \in B'\cap H_F \mid b \leq d \}$. Let $b \in B'\cap H_F$. By Lemma~\ref{lem:equivalent axioms ms-alg}, $\exists b = \exists \boxfr b =\boxfr \exists \boxfr b = \boxfr \exists b$. Therefore, $\exists b \in B' \cap H_F$. Moreover, $b \le \exists b$ and $b \le d$ implies $\exists b \le \exists d = d$. Thus, $\boxfrdot d = \bigvee \{ \exists b \mid b \in B'\cap H_F, \; b \leq d \}$.
Since $(B', \forall')$ is an $\sfive$-algebra, $B_0'$ is the set of $\exists'$-fixpoints of $B'$ and is closed under finite joins. Consequently, $\boxfrdot d \in B_0'$.
\end{proof}

\begin{theorem}\label{thm:fmp of mst}
$\mst$ has the fmp.
\end{theorem}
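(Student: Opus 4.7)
The plan is to apply a filtration-style construction using the finite $\mst$-algebras $\mathfrak{B}_S$ built in Lemma~\ref{lem:boxfdot and boxpdot}. Suppose $\mst\nvdash\varphi$. By algebraic completeness of $\mst$, there exist an $\mst$-algebra $\mathfrak{B}=(B,\boxfr,\boxpr,\forall)$ and a valuation $v$ such that $v(\varphi)\ne 1$. Take $S=\{v(\psi)\mid \psi \text{ is a subformula of }\varphi\}$, which is a finite subset of $B$. Then $(B',\forall')$ is finite by the classical result of Bass~\cite{Bas58}, and by Lemma~\ref{lem:boxfdot and boxpdot} the structure $\mathfrak{B}_S=(B',\boxfrdot,\boxprdot,\forall')$ is a finite $\mst$-algebra.

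Next, I would define a valuation $v'$ on $\mathfrak{B}_S$ by setting $v'(p)=v(p)$ for every propositional letter $p$ occurring in $\varphi$ (all such values lie in $S\subseteq B'$), and prove by induction on the complexity of subformulas $\psi$ of $\varphi$ that $v'(\psi)=v(\psi)$. The boolean cases are immediate since $B'$ is a boolean subalgebra of $B$, and the $\forall$ case follows because $\forall'$ is the restriction of $\forall$ to $B'$. The two nontrivial cases are the tense modalities. Suppose $\psi=\boxfr\chi$. Since $\boxfr\chi$ is a subformula of $\varphi$, the element $\boxfr v(\chi)=v(\boxfr\chi)$ lies in $S\subseteq B'$ and is a $\boxfr$-fixpoint, so $\boxfr v(\chi)\in B'\cap H_F$. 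Moreover, any $b\in B'\cap H_F$ with $b\le v(\chi)$ satisfies $b=\boxfr b\le \boxfr v(\chi)$, so $\boxfr v(\chi)$ is the maximum element of $\{b\in B'\cap H_F\mid b\le v(\chi)\}$. By the induction hypothesis $v'(\chi)=v(\chi)$, and hence $\boxfrdot v'(\chi)=\boxfr v(\chi)=v(\boxfr\chi)$. The case $\psi=\boxpr\chi$ is handled symmetrically using $H_P$. Consequently $v'(\varphi)=v(\varphi)\ne 1$, so the finite $\mst$-algebra $\mathfrak{B}_S$ refutes $\varphi$.

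The bulk of the technical work is already absorbed into Lemma~\ref{lem:boxfdot and boxpdot}, which ensures that the redefined tense operators $\boxfrdot,\boxprdot$ on the finite $\sfive$-subalgebra satisfy the $\st$-axioms and the left commutativity axiom relative to $\forall'$. Once that is in hand, the argument above is essentially formal; the only subtlety is the choice of $S$, which must be taken to contain the value of every subformula of $\varphi$ so that for each subformula $\boxfr\chi$ or $\boxpr\chi$, the corresponding $\boxfr$- or $\boxpr$-fixpoint already sits in $B'$ and is automatically the largest fixpoint in $B'$ below $v(\chi)$. This is the key point where the $\boxfrdot, \boxprdot$ of Lemma~\ref{lem:boxfdot and boxpdot} and the original $\boxfr,\boxpr$ coincide on the relevant elements, which is what makes the induction go through.
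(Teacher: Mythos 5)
Your proposal is correct and follows essentially the same route as the paper: the paper also takes $S$ to consist of the values of all subterms (subformulas) of $\varphi$, invokes Lemma~\ref{lem:boxfdot and boxpdot}, and concludes via the observation that $\boxfrdot$ and $\boxfr$ (resp.\ $\boxprdot$ and $\boxpr$) agree on the relevant elements because the fixpoint $\boxfr v(\chi)$ already lies in $B'$ and realizes the defining join. Your induction on subformulas just spells out what the paper phrases as ``the computation of each subterm in $\mathfrak{B}_S$ is the same as in $\mathfrak{B}$.''
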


\begin{proof}
It is sufficient to prove that each $\mathcal L_{T\forall}$-formula $\varphi$ refuted on some $\mst$-algebra is also refuted on a finite $\mst$-algebra.
Let $t(x_1, \ldots, x_n)$ be the term in the language of $\mst$-algebras that corresponds to $\varphi$, and suppose there is an $\mst$-algebra $\mathfrak{B}=(B, \boxfr, \boxpr, \forall)$ and $a_1, \ldots, a_n \in B$ such that $t(a_1, \ldots, a_n) \neq 1$ in $\mathfrak{B}$. Let
\[
S=\{ t'(a_1, \ldots, a_n) \mid t' \mbox{ is a subterm of } t \}.
\]
Then $S$ is a finite subset of $B$. Therefore, by Lemma~\ref{lem:boxfdot and boxpdot}, $\mathfrak{B}_S=(B', \boxfrdot, \boxprdot, \forall)$ is a finite $\mst$-algebra. It follows from the definition of $\boxfrdot$ that, for each $b \in B'$, if $\boxfr b \in B'$, then $\boxfrdot b= \boxfr b$. Similarly, if
$\boxpr b \in B$, then $\boxprdot b= \boxpr b$. Thus, for each subterm $t'$ of $t$, the computation of $t'$ in $\mathfrak{B}_S$ is the same as that in $\mathfrak{B}$. Consequently, $t(a_1, \ldots, a_n) \neq 1$ in $\mathfrak{B}_S$, and we have found a finite $\mst$-algebra refuting $\varphi$.
\end{proof}

\begin{remark}
Lemma~\ref{lem:boxfdot and boxpdot} in particular proves that $\mathfrak{B}_S$ is an $\st$-algebra. Thus, the proof of the fmp for $\mst$ contains the proof of the fmp for $\st$.
In fact, $\mst$ is a conservative extension of $\st$.
\end{remark}

We conclude this section by showing that the fmp for $\ts$,
$\ms$, and $\mipc$ is a consequence of Theorem~\ref{thm:fmp of mst}.

\begin{theorem}
\begin{enumerate}
\item[]
\item $\ts$ has the fmp.
\item $\ms$ has the fmp.
\item $\mipc$ has the fmp.
\end{enumerate}
\end{theorem}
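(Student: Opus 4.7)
The plan is to derive each fmp as a direct consequence of Theorem~\ref{thm:fmp of mst} via the full and faithful translations assembled in Section~\ref{sec:translations into mst}, together with the frame-level constructions that underlie them. The key observation is that each of the relevant frame constructions preserves finiteness: the identification of $\mst$-frames with $\ms$-frames leaves the underlying set untouched; the $(-)^\dagger$ construction of Proposition~\ref{prop:F^dagger} keeps the same carrier $X$ and only replaces $E$ by $Q_E$; and the skeletons $\mathfrak{F}^t$ and $\mathfrak{F}^\natural$ pass to a quotient under $\sim$, which can only shrink the carrier. So each time we pull a finite countermodel back through one of the translations, we stay in the finite realm.

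Concretely, for (1), if $\ts \nvdash \varphi$ then by faithfulness of $(-)^\dagger$ (Theorem~\ref{thm:()^dagger full and faithful}) we have $\mst \nvdash \varphi^\dagger$, so by Theorems~\ref{thm:fmp of mst} and~\ref{thm:relational semantics for mst} there is a finite $\mst$-frame $\mathfrak{F}$ with $\mathfrak{F} \nvDash \varphi^\dagger$; then $\mathfrak{F}^\dagger$ is a finite $\ts$-frame refuting $\varphi$ by Proposition~\ref{prop:F^dagger}(1),(3). For (2), the proof of Theorem~\ref{thm:from ms4 to ms4.t full and faithful} already shows that any $\mst$-frame $\mathfrak{F}$, viewed as an $\ms$-frame, validates $\varphi$ iff it validates $\varphi^\#$, so a finite $\mst$-countermodel for $\varphi^\#$ (obtained from faithfulness of $(-)^\#$ and Theorem~\ref{thm:fmp of mst}) is itself a finite $\ms$-countermodel for $\varphi$. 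For (3), if $\mipc \nvdash \varphi$ then by faithfulness of $(-)^\natural$ (Theorem~\ref{thm:()^natural full and faithful}) and the fmp for $\ts$ just established, there is a finite $\ts$-frame $\mathfrak{G}$ with $\mathfrak{G} \nvDash \varphi^\natural$; its skeleton $\mathfrak{G}^\natural$ is then a finite $\mipc$-frame refuting $\varphi$ by Proposition~\ref{prop:skeleton of ts4-frames}(1),(3). (Alternatively, one could route $\mipc$ through $\ms$ using Proposition~\ref{prop:skeleton of ms4-frames}, or directly through $\mst$ via $(-)^\flat$, both preserving finiteness.) No real obstacle is expected: every needed ingredient is already in place, and the argument is just the careful composition of these pieces in the order $\mst \to \ts \to \mipc$.
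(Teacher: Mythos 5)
Your proposal is correct and follows essentially the same route as the paper: everything is derived from the fmp of $\mst$ via the full and faithful translations, pulling finite countermodels back through the finiteness-preserving frame constructions ($\mathfrak{F}^\dagger$, the identification of $\mst$-frames with $\ms$-frames, and the skeleton), with your detour for $\mipc$ through $\ts$ being one of the alternatives the paper itself mentions. The only point to tighten is that passing from a finite $\mst$-algebra to a finite $\mst$-frame uses the finite algebra/frame duality (a finite $\mathfrak{B}$ is isomorphic to $(\mathfrak{B}_+)^+$, so $\mathfrak{B}_+$ is the desired finite frame), not merely relational completeness as cited.
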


\begin{proof}
(1). Suppose that $\ts \nvdash \varphi$. By Theorem~\ref{thm:()^dagger full and faithful}, $\mst \nvdash \varphi^\dagger$. Since $\mst$ has the fmp, there is a finite $\mst$-algebra $\mathfrak{B}$ such that $\mathfrak{B} \nvDash \varphi^\dagger$. As noted in Remark~\ref{rem:facts about dualities ms4.t}, $\mathfrak{B}$ is isomorphic to $(\mathfrak B_+)^+$. This yields that $\mathfrak{B}_+ \nvDash \varphi^\dagger$. By Proposition~\ref{prop:F^dagger}(2), $(\mathfrak{B}_+)^\dagger \nvDash \varphi$. We have thus obtained a finite $\ts$-frame $(\mathfrak{B}_+)^\dagger$ refuting $\varphi$. So $((\mathfrak{B}_+)^\dagger)^+$ is a finite $\ts$-algebra such that $((\mathfrak{B}_+)^\dagger)^+ \nvDash \varphi$.

(2). Similar to the proof of (1) but uses the translation $(-)^\# : \ms \to \mst$ instead of $(-)^\dagger$.

(3). Similar to the proof of (1) but uses the composition $(-)^{t \#}: \mipc \to \mst$ instead of $(-)^\dagger$. Alternatively, we can use the other translations $(-)^{\natural \dagger}$ and $(-)^\flat$ of $\mipc$ into $\mst$.
\end{proof}

\section{Connection with the full predicate case}

In \cite{BC20b} we studied a temporal translation of the predicate intuitionistic logic $\iqc$ that is the predicate analogue of the translation $(-)^\flat$ of Definition~\ref{def:()^flat}.
We proved that this translation embeds
$\iqc$ fully and faithfully into a weakening of the tense predicate logic $\qst$. This weakening is necessary since $\qst$ proves the Barcan formula for both $\Box_F$ and $\Box_P$, so Kripke frames of $\qst$ have constant domains, and hence they validate the translation of the constant domain axiom $\forall x(A\vee B)\to (A\vee \forall x B)$, where $x$ is not free in $A$. Since this is not provable in $\iqc$, the translation cannot be full.
Instead we considered the tense predicate logic $\qost$ in which the universal instantiation axiom $\forall x A \to A(y/x)$ is replaced by its weakened version $\forall y(\forall x A \to A(y/x))$. The main result of \cite{BC20b} proves that $\iqc$ translates fully and faithfully into $\qost$ (provided the translation is restricted to sentences).

It is natural to investigate the relationship between $\mst$ and predicate extensions of $\st$.
As we already pointed out in Remark~\ref{rem:monadic}, $\mst$ is not the monadic fragment of $\qst$.
In addition, $\mst$ cannot be the monadic fragment of $\qost$ either since the formula $\forall x A \to A$ is not in general provable in $\qost$,
whereas $\forall \varphi \to \varphi$ is provable in $\mst$.
On the other hand, call a formula $\varphi$ (in the language of $\mst$) {\em bounded} if each occurrence of a propositional letter in $\varphi$ is under the scope of $\forall$. Bounded formulas play the same role as sentences of $\qost$ containing only one fixed variable. It is quite plausible that for a bounded formula $\varphi$ we have $\mst\vdash\varphi$ iff $\qost$ proves the translation of $\varphi$ where each occurrence of a propositional letter $p$ is replaced with the unary predicate $P(x)$ and $\forall$ is replaced with $\forall x$ (for a similar translation of $\sf MIPC$ and its extensions into $\iqc$ and its extensions, see \cite{Ono87}). If true, this would yield that the monadic sentences provable in $\qost$ are exactly the bounded formulas $\varphi$ provable in $\mst$.
It would also yield that restricting the translation $\iqc \to \qost$ of \cite{BC20b} to the monadic setting gives the translation
$(-)^\flat:\mipc \to \mst$ for bounded formulas.

It is natural to seek an axiomatization of the full monadic fragment of $\qost$. Note that in this fragment $\forall$ does not behave like an $\sfive$-modality. For example, $\forall \varphi \to \varphi$
is not in general a theorem of this fragment.

Finally, the translation $(-)^\#: \ms \to \mst$ suggests a translation of
$\qsfour$ into $\qost$ which replaces each occurrence of $\Box$ with $\boxfr$.
It is easy to see that for sentences this translation is full and faithful. 
Composing it with the standard G\"odel translation of $\iqc$ into 
$\qsfour$ yields a translation $\iqc \to \qost$ which is different from the translation of \cite{BC20b}. This translation restricts to the translation $(-)^{t \#}: \mipc \to \mst$ for bounded formulas. Thus, the upper part of the diagram of Section~4.3 extends to the predicate case. 

On the other hand, we do not see a natural way to interpret the tense modalities of $\ts$ as monadic quantifiers, and hence we cannot think of a natural predicate logic which could take the role of $\ts$ in the diagram of Section~4.3. Thus, the lower part of the diagram does not seem to have a natural extension to the predicate case.
Nevertheless, we can consider the predicate analogue of the translation $(-)^{\natural \dagger}:\mipc \to \mst$.
Arguing 
as in Theorems~\ref{thm:commutativity of diagram} and~\ref{thm:flat full and faith and equiv} yields a translation of $\iqc$ into $\qost$ that is full and faithful on sentences and coincides, up to logical equivalence in $\qost$, with the other two predicate translations described in this section. 

We thus obtain the following diagram in the predicate case which is commutative up to logical equivalence in $\qost$.
\[
\begin{tikzcd}
& \qsfour \arrow[rd, "( \; )^\#"] & \\
\iqc \arrow[ru, " ( \; )^t"] \arrow[rr, "(\; )^{\natural \dagger}"', bend right=50, start anchor={[xshift=1.5ex]}, end anchor={[xshift=-2ex]}]  \arrow[rr,  " ( \; )^\flat"]& & \qost \\
& &
\end{tikzcd}
\]

\end{document}